\let\counterwithout\relax
\newtheorem{theorem}{Theorem}
\newtheorem{lemma}[theorem]{Lemma}
\newtheorem{proposition}[theorem]{Proposition}
\newtheorem{corollary}[theorem]{Corollary}
\numberwithin{equation}{section}
\theoremstyle{definition}
\theoremstyle{definition}
\theoremstyle{remark}
\newtheorem{remark}{Remark} 
\newcommand{\ZZ}{\mathbb{Z}}
\newcommand{\PP}{\mathbb{P}}
\newcommand{\E}{\mathbb{E}}
\newcommand{\id}{\mathds{1}}
\newcommand{\NN}{\mathbb{N}}
\newcommand{\RR}{\mathbb{R}}
\newcommand{\Reff}{\mathcal{R}_{\mathrm{eff}}}
\renewcommand{\bf}[1]{\mathbf{#1}}
\newcommand{\wh}[1]{\widehat{#1}}
\renewcommand{\c}[1]{\mathcal{#1}}
\DeclareMathOperator{\LE}{LE}
\DeclareMathOperator{\Gr}{\mathbf{G}}
\newcommand{\PSpine}{\mathbb{P}_{\mathcal{S}}}
\newcommand{\Z}{\mathbb{Z}}
\newcommand{\eps}{\varepsilon}
\DeclareRobustCommand{\cev}[1]{%
	{\mathpalette\do@cev{#1}}%
}
\newcommand{\do@cev}[2]{%
	\vbox{\offinterlineskip
		\sbox\z@{$\m@th#1 x$}%
		\ialign{##\cr
			\hidewidth\reflectbox{$\m@th#1\vec{}\mkern4mu$}\hidewidth\cr
			\noalign{\kern-\ht\z@}
			$\m@th#1#2$\cr
		}%
	}%
}
\title{ \textbf{The number of ends in the uniform spanning tree for recurrent unimodular random graphs.}}
\author{Diederik van Engelenburg \and Tom Hutchcroft}
\begin{document}
	
\maketitle

\begin{abstract}
	We prove that if a unimodular random rooted graph is recurrent, the number of ends of its uniform spanning tree is almost surely equal to the number of ends of the graph. Together with previous results in the transient case, this completely resolves the problem of the number of ends of wired uniform spanning forest components in unimodular random rooted graphs and confirms a conjecture of Aldous and Lyons (2006).
\end{abstract}

\section{Introduction}
The \textbf{uniform spanning tree} of a finite connected graph $G$ is defined by picking uniformly at random a connected subgraph of $G$ containing all vertices but no cycles. 
To go from finite to infinite graphs, it is possible to exhaust $G$ by finite subgraphs and take weak limits with appropriate boundary conditions. For two natural such choices of boundary conditions, known as \textbf{free} and \textbf{wired} boundary conditions, Pemantle \cite{Pemantle} proved that these infinite-volume limits are always well-defined independently of the choice of exhaustion, and that the choice of boundary conditions also does not affect the limit obtained when $G=\Z^d$. Since connectivity of a subgraph is not a closed condition, these weak limits might be supported on configurations that are \emph{forests} rather than trees, and indeed Pemantle proved for $\Z^d$ that the limit is connected if and only if $d\leq 4$. For a general infinite, connected, locally finite graph the infinite-volume limit of the UST with free boundary conditions is called the \textbf{free uniform spanning forest} (FUSF) and the infinite volume limit with wired boundary conditions is called the \textbf{wired uniform spanning forest} (WUSF); when the two limits are the same we refer to them simply as the uniform spanning forest (USF). In their highly influential work~\cite{BLPS}, Benjamini, Lyons, Peres and Schramm  resolved the connectivity question for the WUSF in large generality: the wired uniform spanning tree is a single tree if and only if two random walks intersect infinitely often. The connectivity of the FUSF appears to be a much more subtle question and, outside of the case that the two forests are the same, is understood only in a few examples \cite{MR4010561,pete2022free,tang2021weights,AHNR}.
%
%
For \emph{recurrent} graphs, which are the main topic of this paper, the infinite-volume limit of the UST is always defined independently of boundary conditions and a.s.\ connected \cite[Proposition 5.6]{BLPS}, so that we can unambiguously refer to the uniform spanning tree (UST) of an infinite, connected, locally finite, recurrent graph $G$.


%


After connectivity, the next most basic topological property of the USF is the number of \textbf{ends} its components have.
Here, we say that a graph has at least $m$ ends whenever there exists some finite set of vertices $W$ such that $G \setminus W$ has at least $m$ infinite connected components. The graph is said to be $m$-ended if at has at least $m$ but not $m + 1$ ends. Understanding the number of ends of the USF turns out to be rather more difficult than connectivity, with a significant literature now devoted to the problem.
 For \emph{Cayley graphs}, it follows from abstract principles \cite[Section 3.4]{AHNR} that every component has $1$, $2$, or infinitely many ends almost surely, and for \emph{amenable} Cayley graphs such as $\Z^d$ (for which the WUSF and FUSF always coincide) is follows by a Burton-Keane \cite{BurtonKeane} type argument that every component has either one or two ends almost surely;
   see \cite[Chapter 10]{LyonsPeresProbNetworks} for detailed background.
   For the \emph{wired} uniform spanning forest on transitive graphs, a complete solution to the problem was given by Benjamini, Lyons, Peres, and Schramm \cite{BLPS} and Lyons, Morris, and Schramm \cite{LyonsMorrisSchramm2008}, who proved that every component of the WUSF of an infinite transitive graph is one-ended almost surely unless the graph in question is rough-isometric to $\Z$. Before going forward, let us emphasize that the recurrent case of this result \cite[Theorem 10.6]{BLPS} is established using a completely different argument to the transient case, with the tools available for handling the two cases being largely disjoint.


Beyond the transitive setting, various works have established mild conditions under which every component of the WUSF is one-ended almost surely, applying in particular to planar graphs with bounded face degrees \cite{MR4010561} and graphs satisfying isoperimetric conditions only very slightly stronger than transience \cite{MR3773383,LyonsMorrisSchramm2008}. These proofs are quantitative, and recent works studying critical exponents for the USF of $\Z^d$ with $d\geq 3$ \cite{MR4055195,hutchcroft2020logarithmic,MR4348685} and Galton-Watson trees~\cite{MR4095018} can be thought of as a direct continuation of the same line of research.





In parallel to this deterministic theory,
 Aldous and Lyons \cite{AldousLyonsUnimod2007} observed that the methods of \cite{BLPS} also apply to prove that the WUSF has one-ended components on any transient \emph{unimodular random rooted graph} of bounded degree, and the second author later gave new proofs of this result with different methods that removed the bounded degree assumption \cite{Hutchcroft2,MR3773383}. It is also proven in \cite{MR3651050,MR3813990} that every component of the \emph{free} uniform spanning forest of a unimodular random rooted graph is infinitely ended a.s.\ whenever the free and wired forests are different.
 Here, unimodular random rooted graphs comprise a very large class of random graph models including Benjamini-Schramm limits of finite graphs \cite{BenjaminiSchrammRecurrence}, Cayley graphs, and (suitable versions of) Galton-Watson trees, as well as e.g.\ percolation clusters on such graphs; See Section~\ref{subsec:definitions} for definitions and e.g.\ \cite{CurNotes,AldousLyonsUnimod2007} for detailed background. 


  The aforementioned works \cite{AldousLyonsUnimod2007,Hutchcroft2,MR3773383,MR3651050,MR3813990} completely resolved the problem of the number of ends of the WUSF and FUSF for \emph{transient} unimodular random rooted graphs, but the recurrent case remained open.
Besides the fact that the transient methods do not apply, a further complication of the recurrent case is that it is possible for the UST to be either one-ended or two-ended according to the geometry of the graph: indeed, the UST of $\Z^2$ is one-ended while the UST of $\Z$ is two-ended.

 Aldous and Lyons conjectured \cite[p.~1485]{AldousLyonsUnimod2007} that the dependence of the number of ends of the UST on the geometry of the graph is as simple as possible: The UST of a recurrent unimodular random rooted graph is one-ended if and only if the graph is. 
The fact that two-ended unimodular random rooted graphs have two-ended USTs is trivial; the content of the conjecture is that one-ended unimodular random rooted graphs have one-ended USTs. Previously, the conjecture was resolved under the assumption of planarity in \cite{AHNR}, while in \cite{BvE21} it was proved (without using the planarity assumption) that the UST of a recurrent unimodular random rooted graph is one-ended precisely when the ``harmonic measure from infinity’’ is uniquely defined. 
 In this paper we resolve the conjecture. 

\begin{theorem} \label{T:main}
	Let $(G, o)$ be a recurrent unimodular random rooted graph and let $T$ be the uniform spanning tree of $G$. Then $T$ has the same number of ends as $G$ a.s.
\end{theorem}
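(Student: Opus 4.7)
The inequality \(|\mathrm{ends}(T)| \geq |\mathrm{ends}(G)|\) is automatic: for any finite set \(W\), each infinite component of \(G \setminus W\) retains infinitely many vertices in \(T\), and since the restriction of \(T\) to such a component is a forest on infinitely many vertices it must contain at least one infinite subtree. In particular the theorem is trivial when \(G\) has infinitely many ends, and by the general classification of the number of ends of a unimodular random rooted graph it remains to treat the cases where \(G\) has one or two ends almost surely.

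The heart of the theorem is the one-ended case. For this I would invoke the equivalence established in \cite{BvE21}: on a recurrent unimodular random rooted graph the UST is one-ended almost surely if and only if the harmonic measure from infinity on \(G\) is uniquely defined almost surely. The problem thereby reduces to proving that the harmonic measure from infinity is always uniquely defined on a one-ended recurrent unimodular random rooted graph. Concretely, for a finite set \(W \ni o\) and an exhaustion \(G_n \uparrow G\), one needs to show that the hitting distribution on \(\partial W\) of a random walk started at a boundary point of \(G_n\) converges as \(n \to \infty\) to a limit that is independent both of the chosen sequence of boundary points and of the exhaustion.

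The plan for establishing this uniqueness is a coupling argument: couple two random walks from arbitrary distant starting points \(y, y'\) so that, with probability tending to one as \(y, y' \to \infty\), they meet \emph{before} either first hits \(W\). Any such coupling immediately forces the hitting distributions on \(W\) from \(y\) and from \(y'\) to agree in total variation, giving the required uniqueness. Unimodularity enters crucially by letting one transfer coupling estimates between the root-centered picture (where we have good control over SRW statistics) and the faraway starting points, while one-endedness of \(G\) is used to guarantee that both walks are heading, topologically, to ``the same'' place. An equivalent route is via the potential kernel: show that \(\lim_n \bigl(g_n(x,x) - g_n(x,y)\bigr)\) (with \(g_n\) the Green function of the walk killed on exit from \(G_n\)) exists and is independent of the exhaustion, which again yields uniqueness of the harmonic measure from infinity.

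The principal obstacle is this coupling in the recurrent regime. Because both walks revisit \(W\) infinitely often, the event ``meet before hitting \(W\)'' is fragile --- it must be won on the first attempt --- and controlling it requires a delicate analysis of the bottleneck regions through which every walk must pass on its final approach to \(W\), together with a mass-transport identity used to promote the topological one-endedness of \(G\) to a quantitative meeting estimate. Once the one-ended case is settled, the two-ended case should follow by decomposition: a two-ended recurrent unimodular graph admits a canonical bi-infinite backbone, and excising a sufficiently large finite set splits \(G\) into two essentially one-ended halves; applying the one-ended argument to each side then yields exactly two ends for \(T\) and completes the proof.
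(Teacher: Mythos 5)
Your reduction via the result of \cite{BvE21} — that the UST is one-ended iff the harmonic measure from infinity is unique — is the same starting point the paper uses, and your observation that the ``$\geq$'' direction is automatic is correct. But the central step of your plan, a ``meet before hitting $W$'' coupling of two random walks from distant starting points, is not carried out: you name it as ``the principal obstacle'' and then offer only a vague invocation of ``bottleneck regions'' and an unspecified mass-transport estimate. This is a genuine gap, not a deferred technicality. Such couplings are delicate even on $\mathbb{Z}^2$, and producing one on an arbitrary one-ended recurrent unimodular random rooted graph — with no local geometric control whatsoever — is essentially as hard as the theorem itself. In effect you have restated the problem, not solved it.

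The paper sidesteps this entirely by arguing in the contrapositive. Assuming the UST is two-ended (equivalently, $\mathcal{H}$ has exactly two extremal points $\ell, r$), it considers the difference of potential kernels $M_o(x) = a^r(x,o) - a^\ell(x,o)$ along the spine $\mathcal{S}$ of $T$. This is an additive cocycle with respect to the shift along the spine, and the spine-rooted law is shift-stationary by mass transport. The boundary behavior \eqref{eq:identifying_ends} forces $M_o(\sigma^n(o))$ to be eventually positive, and a short ergodic argument (Proposition~\ref{prop:ergodic}) then shows it grows linearly, hence so does the effective resistance $\Reff(o \leftrightarrow \sigma^n(o))$, hence so do graph distances along the spine. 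A second mass-transport argument converts this into linear volume growth of $G$, which by \cite{bowen2021perfect} is equivalent to $G$ being two-ended. No coupling is required at any stage. Your treatment of the two-ended case is also off: cutting $G$ into ``two one-ended halves'' and applying the one-ended result to each does not work because the halves are not themselves unimodular random rooted graphs; in any case this direction is already disposed of by your opening ``$\geq$'' observation together with the fact (from \cite{AldousLyonsUnimod2007}) that the UST of a recurrent unimodular graph has at most two ends.
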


 To see that the theorem is not true without unimodularity, 
  consider taking the line graph $\ZZ$ and adding a path of length $2^n$ connecting $-n$ connecting to $n$ for each $n$, making the graph one-ended. Kirchoff's effective resistance formula implies that the probability that the additional path connecting $-n$ to $n$ is included in the UST is at most $n/(2^n+n)$, and a simple Borel-Cantelli argument implies that the UST is two-ended almost surely. Similar examples show that Theorem~\ref{T:main} does not apply to unimodular random rooted \emph{networks}, since we can use edges of very low conductance to make the network one-ended while having very little effect on the geometry of the UST. 


\medskip

\textbf{About the proof.}
We stress again that the tools used in the transient case do not apply at all to the recurrent case, and we are forced to use completely different methods that are specific to the recurrent case. 
We build on \cite{BvE21} which proved that the ``harmonic measures from infinity'' are uniquely defined if and only if the uniform spanning tree is one-ended; A self-contained treatment of (a slight generalization of) the results of \cite{BvE21} that we will need is given in Appendix~\ref{Appendix:BvE}. The set of harmonic measures from infinity can be thought of as a ``boundary at infinity'' for the graph, analogously to the way the Martin boundary is used in transient graphs. It is implicit in \cite{BvE21} that these measures correspond to the ways in which a random walk ``conditioned to never return to the root'' can escape to infinity. We develop these ideas further in Section~\ref{sec:boundary_theory}, in which we make this connection precise. We then apply these ideas inside an ergodic-theoretic framework to prove that \textit{if} the UST has two ends, then the effective resistance must grow linearly along the unique bi-infinite path in the tree, which implies in particular that graph distances must also grow linearly. To conclude, we argue that this can only happen when the graph has linear volume growth, which is known to be equivalent to two-endedness for unimodular random rooted graphs \cite{BenHutch,bowen2021perfect}.

\paragraph{Acknowledgments} DvE wishes to thank Nathana\"el Berestycki for many useful discussions and comments. Most of the research was carried out while DvE visited TH at Caltech and we are grateful for the hospitality of the institute. DvE is supported by the FWF grant P33083, “Scaling limits in random conformal geometry”.

\section{Boundary theory of recurrent graphs}
\label{sec:boundary_theory}

In this section we develop the theory of harmonic measures from infinity on recurrent graphs, their associated potential kernels and Doob transforms, and how this relates to the spanning tree. Much of the theory we develop here is a direct analogue for recurrent graphs of the theory of Martin boundaries of transient graphs \cite{Woess,dynkin1969boundary}.  This theory is interesting in its own right, and we were surprised to find how little attention has been paid to these notions outside of some key motivating examples such as $\Z^2$ \cite{popov2020transience,gantert2019range}.

\medskip

All of the results in this section will concern deterministic infinite, connected, recurrent, locally finite graphs; applications of the theory to unimodular random rooted graphs will be given in Section~\ref{sec:main_proof}.

\subsection{Harmonic measures from infinity}

Let $G=(V,E)$ be an infinite, connected, locally finite, recurrent graph. For each $v\in V$ we write $\mathbf{P}_v$ for the law of the simple random walk on $G$ started at $v$, and for each set $A\subseteq V$ write $T_A$ and $T^+_A$ for the first visit time of the random walk to $A$ and first positive visit time of the random walk to $A$ respectively. Given a probability measure $\mu$ on $V$, we also write $\mathbf{P}_\mu$ for the law of the random walk started at a $\mu$-distributed vertex. 

A \textbf{harmonic measure from infinity}  $h=(h_B: B\subset V$ finite$)$  on $G$ is a collection of probability measures on $V$ indexed by the finite subsets $B$ of $V$ with the following properties:
\begin{enumerate}
	\item $h_B$ is supported on $\partial B$ for each $B\subset V$, where $\partial B$ is the set of elements of $B$ that are adjacent to an element of $V\setminus B$.
	\item For each pair of finite sets $B \subseteq B'$, $h_B$ and $h_{B'}$ satisfy the consistency condition
	\begin{equation}
	\label{eq:consistency}
	h_B(u) = \sum_{v\in B'} h_{B'}(v)\mathbf{P}_v(X_{T_B}=u)
	\end{equation}
	for every $u\in B$.
\end{enumerate}
We denote the space of harmonic measures from infinity by $\c{H}$, which (identifying the measures $h_B$ with their probability mass functions) is a compact convex subset of the space of functions $\{$finite subsets of $V\}\to\mathbb{R}^V$ when equipped with the product topology. As mentioned above, the space $\c{H}$ plays a role for recurrent graphs analogous to that played by the Martin boundary for transient graphs; the analogy will become clearer once we introduce potential kernels in the next subsection. We say that the harmonic measure from infinity is \textbf{uniquely defined} when $\c{H}$ is a singleton.

If $\mu_n$ is a sequence of probability measures on $V$ converging vaguely to the zero measure in the sense that $\mu_n(v)\to 0$ as $n\to\infty$ for each fixed $v\in V$ then any subsequential limit of the collections $(\mathbf{P}_{\mu_n}(X_{T_B}  = \cdot )\,:\, B \subset V$ finite$)$ belongs to $\c{H}$, with these collections themselves satisfying every property of a harmonic measure from infinity other than the condition that $h_B$ is supported on $\partial B$ for every finite $B$. (Indeed, the consistency condition \eqref{eq:consistency} follows from the strong Markov property of the random walk.) In fact every harmonic measure from infinity can be written as such a limit.

\begin{lemma}
\label{lem:harmonic_measures_as_limits}
If $h\in \c{H}$ is a harmonic measure from infinity then there exists a sequence of finitely supported probability measures $(\mu_n)_{n\geq 1}$ on $V$ such that $\mu_n(v)\to 0$ for every $v\in V$ and 
\begin{equation} \label{eq: HM def}
	 h_B(\cdot) = \lim_{n \to \infty} \mathbf{P}_{\mu_n}(X_{T_B}  = \cdot\, ) \qquad \text{ for every $B\subset V$ finite.}
\end{equation}
\end{lemma}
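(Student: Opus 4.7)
The proof should fall out of the consistency condition almost immediately once we take the right sequence of measures; the key observation is that the consistency condition is itself a statement of the form we want.

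My plan is as follows. Fix any exhaustion $V_1 \subseteq V_2 \subseteq \cdots$ of $V$ by finite sets with $\bigcup_n V_n = V$, and define $\mu_n := h_{V_n}$. By definition of a harmonic measure from infinity, each $\mu_n$ is a probability measure supported on $\partial V_n$, which is in particular finite. For any fixed finite set $B \subset V$, we have $B \subseteq V_n$ for all sufficiently large $n$, at which point the consistency condition \eqref{eq:consistency} applied to $B \subseteq V_n$ gives the \emph{exact} equality
\[
h_B(u) = \sum_{v \in V_n} h_{V_n}(v)\,\mathbf{P}_v(X_{T_B} = u) = \mathbf{P}_{\mu_n}(X_{T_B} = u)
\]
for every $u \in B$, so the limit in \eqref{eq: HM def} holds trivially (it is eventually constant in $n$).

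It remains only to verify that $\mu_n(v) \to 0$ for every $v \in V$. But $\mu_n$ is supported on $\partial V_n$, and $v \in \partial V_n$ requires $v$ to have at least one neighbor outside $V_n$; since $V$ is locally finite and $V_n$ exhausts $V$, eventually every neighbor of $v$ lies in $V_n$, so $v \notin \partial V_n$ and thus $\mu_n(v) = 0$ for all sufficiently large $n$.

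Since the argument is essentially just an unpacking of the definition of $\mathcal{H}$, there is no real obstacle here. The only thing one might worry about is whether the consistency condition for $B \subseteq B'$ truly reduces to the stated display, but this is immediate from the interpretation of $h_{V_n}$ as a probability distribution on $\partial V_n$ and the strong Markov property in the form encoded by \eqref{eq:consistency}. The lemma is really there to justify the \emph{name} ``harmonic measure from infinity,'' and the content is entirely in the existence of the exhaustion.
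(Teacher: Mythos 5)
Your proof is correct and is essentially identical to the paper's: both take $\mu_n = h_{V_n}$ for an exhaustion $(V_n)$ and apply the consistency condition \eqref{eq:consistency} directly. You additionally spell out the (brief but genuinely required) verification that $\mu_n(v)\to 0$ via local finiteness, which the paper leaves implicit.
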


\begin{proof}
Fix $h\in \c{H}$.
Let $V_1 \subset V_2 \subset V_3 \cdots$ be an increasing sequence of finite subsets of $V$ with $\bigcup_i V_i = V$, and for each $n\geq 1$ let $\mu_n=h_{V_n}$. It follows from the consistency condition \eqref{eq:consistency} that
\[
h_B(\cdot) = \mathbf{P}_{\mu_n}(X_{T_B}  = \cdot\, ) \qquad \text{ for every $B\subset V_n$,}
\]
and the claim follows since every finite set is eventually contained in $V_n$.
\end{proof}

Since $\c{H}$ is a weakly compact subspace of the set of functions from finite subsets of $V$ to $\mathbb{R}^V$, which is a locally convex topological vector space, it is a Choquet-simplex: Every element can be written as a convex combination of the extremal points. In particular, if $\c{H}$ has more than one point then it must have more than one extremal point. 
 This will be useful to us because extremal points of $\c{H}$ are always limits of harmonic measures from sequences of single vertices. Indeed, identifying each vertex $v\in V$ with the collection of harmonic measures $(\mathbf{P}_v(X_{T_B}=\cdot):B\subset V$ finite$)$ allows us to think of $V \cup \c{H}$ as a compact Polish space containing $V$ (in which $V$ might not be dense), and we say that a sequence of \emph{vertices} $(v_n)_{n\geq 0}$ converges to a point $h\in \c{H}$ if $h_B(\cdot) = \lim_{n \to \infty} \mathbf{P}_{v_n}(X_{T_B}  = \cdot \,)$ for every $B\subset V$ finite.

\begin{lemma} \label{L: extremal generators}
	If $h \in \c{H}$ is extremal, there exists a sequence of vertices $(v_n)_{n\geq 0}$ such that $v_n$ converges to $h$ as $n\to\infty$.
\end{lemma}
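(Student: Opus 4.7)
The plan is to reduce the lemma to Milman's theorem by establishing that $\mathcal{H}$ is the closed convex hull of the closed subset
\[
\overline{V}_\infty := \bigl\{h \in \mathcal{H} : h = \lim_{n\to\infty} \eta(v_n) \text{ for some sequence of vertices } (v_n)_{n\geq 0}\bigr\},
\]
where $\eta(v) := (\mathbf{P}_v(X_{T_B} = \cdot) : B \subset V \text{ finite})$. Once this identity is verified, Milman's theorem forces every extreme point of $\mathcal{H}$ into $\overline{V}_\infty$, which is precisely the conclusion of the lemma.

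The key structural observation is that for each $v \in V$ the point $\eta(v)$ is topologically isolated from $\mathcal{H}$. Setting $B_v := \{v\} \cup N(v)$ for the closed neighborhood of $v$, the function $f_v(g) := g_{B_v}(v)$ is continuous on the compact ambient space $\mathcal{Y} := \prod_{B \subset V\text{ finite}} \mathcal{M}_1(B)$ and takes the value $1$ only at $\eta(v)$ on $\overline{\eta(V)}$: it vanishes at $\eta(u)$ for any $u \neq v$ because a walk from $u$ must first enter $B_v$ at some point of $N(v)$, and it vanishes on all of $\mathcal{H}$ because every neighbor of $v$ lies in $B_v$, so $v \notin \partial B_v$. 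Consequently $\eta(v) \notin \mathcal{H}$ for any $v$, the decomposition $\overline{\eta(V)} = \eta(V) \sqcup \overline{V}_\infty$ is disjoint, each $\{\eta(v)\}$ is clopen in $\overline{\eta(V)}$, and $\overline{V}_\infty$ is closed.

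The crucial inclusion $\mathcal{H} \subseteq \overline{\mathrm{conv}}(\overline{V}_\infty)$ is then obtained by a barycenter argument. Given $h \in \mathcal{H}$, the proof of Lemma~\ref{lem:harmonic_measures_as_limits} produces finitely supported measures $\mu_n = h_{V_n}$ (for an exhaustion $V_n \uparrow V$) whose supports $\partial V_n$ escape every finite subset of $V$ and satisfy $\eta(\mu_n) \to h$. Writing $\mu_n = \sum_i p_{n,i}\delta_{v_{n,i}}$ and lifting to $\tilde{\mu}_n := \sum_i p_{n,i}\delta_{\eta(v_{n,i})}$, a probability measure on the compact metric space $\overline{\eta(V)}$, Prokhorov yields a weak subsequential limit $\tilde{\mu}$; continuity of the barycenter map identifies $h$ as the barycenter of $\tilde{\mu}$. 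The clopen indicator property of $\{\eta(v)\}$ gives $\tilde{\mu}(\{\eta(v)\}) = \lim_n \tilde{\mu}_n(\{\eta(v)\}) = \lim_n \mu_n(v) = 0$ for each $v$, and summing over the countable set $V$ confines $\tilde{\mu}$ to $\overline{V}_\infty$. Hence $h$ is the barycenter of a probability measure on $\overline{V}_\infty$, i.e.\ $h \in \overline{\mathrm{conv}}(\overline{V}_\infty)$.

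Applying Milman's theorem to $\mathcal{H} = \overline{\mathrm{conv}}(\overline{V}_\infty)$ completes the proof: every extreme point of $\mathcal{H}$ lies in the closure of $\overline{V}_\infty$, which equals $\overline{V}_\infty$ itself. The main conceptual subtlety is that Lemma~\ref{lem:harmonic_measures_as_limits} naturally produces only an approximation by convex combinations of vertex-hitting measures rather than by a single sequence of them; extremality of $h$ is precisely what is needed to upgrade the former to the latter, and this upgrade is channeled through Milman's theorem.
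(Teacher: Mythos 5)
Your proof is correct, and it takes a genuinely different route from the paper's. Both arguments hinge on Milman's partial converse to Krein--Milman, but they deploy it in different ambient sets. The paper embeds $\c{H}$ into the larger compact convex set $\overline{\c{I}}$ (the closure of the set of hitting distributions coming from finitely supported source measures), applies Milman there to conclude that $\mathrm{ext}(\overline{\c{I}})$ lies in the closure of the single-vertex hitting distributions, and then argues separately that extreme points of $\c{H}$ remain extreme in $\overline{\c{I}}$ by exploiting the fact that elements of $\c{H}$ put zero mass on the interior of every finite set while nontrivial convex combinations involving a finitely supported source measure do not. You instead stay entirely inside $\c{H}$: you isolate the honest vertex points $\eta(v)$ from $\c{H}$ by the coordinate functional $f_v(g)=g_{B_v}(v)$, conclude that $\overline{V}_\infty$ is a closed subset of $\c{H}$, and then prove the identity $\c{H}=\overline{\conv}(\overline{V}_\infty)$ directly via a barycenter/Prokhorov argument, after which Milman applies inside $\c{H}$ itself. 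Your route costs a bit more machinery (weak convergence of the lifted measures $\tilde\mu_n$, continuity of the barycenter map, the portmanteau argument with the clopen sets $\{\eta(v)\}$), but it buys a cleaner conceptual picture: you never need to compare extremality in $\c{H}$ with extremality in a larger set, and you sidestep the question of identifying $\overline{\c{I}}$ precisely. A small point worth making explicit is that the exhaustion should be chosen so that each fixed vertex $v$ is eventually an interior point of $V_n$ (e.g.\ $V_n=B(o,n)$), which guarantees $\mu_n(v)=h_{V_n}(v)=0$ for all large $n$ and hence $\tilde\mu(\{\eta(v)\})=0$.
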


\begin{proof}
Let $\c{I}$ be the set of functions $h: \{B \subset V \text{ finite}\}\to \RR^V$ of the form
	 \[h_B(\cdot) =  \mathbf{P}_{\mu}(X_{T_B}  = \cdot ) \qquad \text{ for every $B\subset V$ finite}\]
	 for some finitely supported measure $\mu$ on $V$. Lemma~\ref{lem:harmonic_measures_as_limits} implies that $\overline{\c{I}}=\c{I}\cup\c{H}$ is a compact convex subset of the  space of all functions $\{B \subset V \text{ finite}\}\to \mathbb{R}^V$ equipped with the product topology, which is a locally convex topological vector space. By the Krein-Milman theorem, a subset $W$ of $\c{I} \cup \c{H}$ has closure containing the set of extremal points of $\c{I} \cup \c{H}$ if and only if $\c{I} \cup \c{H}$ is contained in the closed convex hull of $W$. Thus, if we define $\mathcal{I}_\mathrm{ext}$ to be the set of functions
	 $h: \{B \subset V \text{ finite}\}\to \RR^V$ of the form
	 \[h_B(\cdot) =  \mathbf{P}_{z}(X_{T_B}  = \cdot ) \qquad \text{ for every $B\subset V$ finite}\]
	 for some $z\in V$ then $\c{I}$ is clearly contained in the convex hull of $\mathcal{I}_\mathrm{ext}$, so that $\c{I}\cup\c{H}$ is contained in the closed convex hull of $\mathcal{I}_\mathrm{ext}$ and, by the Krein-Milman theorem, the set of extremal points of $\c{I}\cup\c{H}$ is contained in the closure of $\mathcal{I}_\mathrm{ext}$. 

	 Now, observe that for any non-trivial convex combination of an element of $\c{I}$ and an element of $\c{H}$, there must exist a finite set of vertices $B$ and a point $z$ in the interior of $B$ (i.e., in $B$ and not adjacent to any element of $V\setminus B$) such that $h_B(z)\neq 0$; indeed, if the element of $\c{I}$ corresponds to some finitely supported measure $\mu$, then any $B$ containing the support of $\mu$ in its interior and any $z$ in the support of $\mu$ will do. Since no element of $\c{H}$ can have this property, it follows that non-trivial convex combinations of elements of $\c{I}$ and $\c{H}$ cannot belong to $\c{H}$ and hence that extremal points of $\c{H}$ are also extremal in $\c{I}\cup\c{H}$. It follows that the set of extremal points of $\c{H}$ is contained in the closure of $\c{I}_\mathrm{ext}$, which is equivalent to the claim.
\end{proof}

	\begin{remark} The converse to this lemma is \emph{not} true: A limit of a sequence of Dirac measures need not be extremal. For example, if we construct a graph from $\Z$ by attaching a very long path between $-n$ and $n$ for each $n\geq 1$ and take $z_n$ to be a point in the middle of this path for each $n$, the sequence $(z_n)_{n\geq 1}$ will converge to a non-extremal element of $\c{H}$ that is the convex combination of the limits of $(n)_{n\geq 1}$ and $(-n)_{n\geq 1}$.
	\end{remark}

\subsection{Potential kernels and Doob transforms}


The arguments in \cite{BvE21} heavily rely on a correspondence between the harmonic measure from infinity and its \textbf{potential kernel}. One important feature of the potential kernel is that, given a vertex $o\in V$ and a point $h\in\c{H}$, it provides a sensible way to ``condition the random walk to converge to $h$ before returning to $o$". We begin by discussing how conditioning the random walk to hit a particular vertex before returning to $o$ can be described in terms of Doob transforms before developing the analogous limit theory.

\medskip

\textbf{Doob transforms and non-singular conditioning.}
Suppose that we are given two distinct vertices $o$ and $z$ in an infinite, connected, locally finite recurrent graph $G$. Letting $\mathbf{G}_z(x,y)$ be the expected number of times a random walk started at $x$ visits $y$ before hitting $z$, we can compute that the function
\[
a(x)=\frac{\mathbf{G}_z(o,o)}{\deg(o)}-\frac{\mathbf{G}_z(x,o)}{\deg(o)} = \frac{\mathbf{P}_x(T_o>T_z)\mathbf{G}_z(o,o)}{\deg(o)}
\]
is harmonic at every vertex other than $o$ and $z$, and has
\[
\Delta a(o) = 0-\deg(o) \mathbf{E}_o[a(X_1)] = -\mathbf{P}_o(T_o^+>T_z)\mathbf{G}_z(o,o) = -1,
\]
where $\Delta$ denotes the graph Laplacian $\Delta f(x)=\deg(x)f(x)-\sum_{y\sim x} f(y)=\deg(x)\mathbf{E}_x[f(X_0)-f(X_1)]$ (terms in this sum are counted with appropriate multiplicity if there is more than one edge between $x$ and $y$). 
Moreover, the quantity $a(x)$ is strictly positive at every vertex $x$ that is neither equal to $o$ nor disconnected from $z$ by $o$ in the sense that every path from $x$ to $z$ must pass through $o$. 
Observe that the trivial identity
\begin{multline}
\mathbf{P}_o((X_0,\ldots,X_{n})=(x_0,\ldots,x_n) ) = \prod_{i=1}^n p(x_{i-1},x_i) \\= \frac{1}{a(x_n)} a(x_1)p(o,x_1) \prod_{i=2}^n \frac{a(x_i)}{a(x_{i-1})} p(x_{i-1},x_i)\label{eq:finite_Doob0}
\end{multline}
holds 
for every sequence of vertices $x_0,\ldots,x_n$ with $x_0=o$ and $a(x_i)>0$ for every $i>0$. Since $a(z)=G_z(o,o)=\mathbf{P}_o(T_z<T_o^+)^{-1}$ it follows that
\begin{equation}
\label{eq:finite_Doob}
\mathbf{P}_o((X_0,\ldots,X_{n})=(x_0,\ldots,x_n) \mid T_z<T_o^+) = a(x_1)p(o,x_1) \prod_{i=2}^n \frac{a(x_i)}{a(x_{i-1})} p(x_{i-1},x_i)
\end{equation}
for every sequence of vertices $x_0,\ldots,x_n$ with $x_0=o$, $x_n=z$, and $x_i \notin \{o,z\}$ for every $0<i<n$ (which implies that $a(x_i)>0$ for every $1\leq i \leq n$). Now, the fact that $a$ is harmonic off of $\{o,z\}$ and has $\Delta a(o)=-1$ implies that we can define a stochastic matrix with state space $\{x\in V: x=o$ or $a(x)>0\}$ by
\[
\wh{p}^a(x,y)=\begin{cases}
\frac{a(y)}{a(x)}p(x,y) & x \notin \{o,z\}\\
a(y) & x =0\\
\id(y=z) & x = z,
\end{cases}
\]
and if we define the \textbf{Doob transformed walk} $\wh{X}^a$ to be the Markov chain with this transition matrix started from $o$ then it follows from \eqref{eq:finite_Doob} that $(\wh{X}^a_{n})_{n=0}^{T_z}$ has law equal to the conditional law of the simple random walk $(X_n)_{n=0}^{T_z}$ started at $o$ and conditioned to hit $z$ before returning to $o$.
Moreover, letting $\wh{\mathbf{P}}^a_o$ denote the law of $\wh{X}^a$, it follows from the definition of $\wh{X}^a$ that
\begin{multline}
\wh{\mathbf{P}}^a_o\left ((\wh{X}^a_0,\ldots,\wh{X}^a_n)=(x_0,\ldots,x_n)\right) = 
\prod_{i=1}^n \wh{p}^a(x_{i-1},x_i)= 
a(x_{1})\prod_{i=2}^n \frac{a(x_i)}{a(x_{i-1})} p(x_{i-1}, x_i) \\
= a(x_{n})\prod_{i=2}^n p(x_{i-1}, x_i)
=
\deg(o) a(x_n) \mathbf{P}_o\left((X_0,\ldots,X_n)=(x_0,\ldots,x_n)\right)
\label{eq:finite_Doob2}
\end{multline}
for every sequence $x_0,\ldots,x_n$ with $x_0=o$ and $x_i\notin\{o,z\}$ for every $0<i<n$. 

\paragraph{Defining the potential kernel.}
We now define the \textbf{potential kernel} $a^h$ associated to a point $h\in \c{H}$ via the formula
\begin{equation} \label{eq: PK def}
	a^h(x, y) =  h_{x, y}(x) \Reff(x \leftrightarrow y)  
\end{equation}
where we write $h_{x,y}=h_{\{x,y\}}$, so that $a^h(x,x)=0$ for each $x\in V$. The fact that this is a sensible definition owes largely to the following lemma.

\begin{lemma}
\label{lem:PK_harmonic}
For each $h\in \c{H}$, the potential kernel $a^h(x, y) =  h_{x, y}(x) \Reff(x \leftrightarrow y)$ satisfies
\begin{equation} \label{eq: PK harmonic}
	\Delta a^h(\,\cdot\,, y) = -\id(\,\cdot=y),
\end{equation}
so that the potential kernel $a^h(\cdot, y)$ is harmonic away from $y$ and subharmonic at $y$.
\end{lemma}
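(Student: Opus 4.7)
My plan is to first establish the identity when the harmonic measure from infinity is replaced by a finitely supported probability measure on $V$, and then pass to the limit via Lemma~\ref{lem:harmonic_measures_as_limits}. Throughout I fix $y\in V$, and for any finitely supported probability measure $\mu$ on $V$ I set
\[
\Psi_\mu(x):=\mathbf{P}_\mu(X_{T_{\{x,y\}}}=x)\,\Reff(x\leftrightarrow y).
\]
The goal of the first half of the argument is to show that $\Delta_x\Psi_\mu(x)=\mu(x)-\id(x=y)$ for every such $\mu$.

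By linearity it suffices to treat the Dirac case $\mu=\delta_z$. Here I would first identify
\[
\Psi_{\delta_z}(x)=\mathbf{P}_z(T_x<T_y)\Reff(x\leftrightarrow y)=\frac{\mathbf{G}_y(x,z)}{\deg(z)},
\]
using the standard identities $\mathbf{G}_y(z,x)=\mathbf{P}_z(T_x<T_y)\mathbf{G}_y(x,x)$, $\mathbf{G}_y(x,x)=\deg(x)\Reff(x\leftrightarrow y)$, and the reversibility relation $\deg(a)\mathbf{G}_y(a,b)=\deg(b)\mathbf{G}_y(b,a)$. A direct application of the Markov property then gives $\Delta_x\mathbf{G}_y(x,z)=\deg(x)\id(x=z)$ at every $x\neq y$, while $\mathbf{G}_y(y,z)=0$ yields $\Delta_x\mathbf{G}_y(y,z)=-\sum_{w\sim y}\mathbf{G}_y(w,z)$. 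The delicate step is evaluating this last sum: I would interpret $(1/\deg(y))\sum_{w\sim y}\mathbf{G}_y(w,z)$ as the expected number of visits to $z$ on one excursion from $y$, which by strong Markov at $T_z$ equals $\mathbf{P}_y(T_z<T_y^+)\mathbf{G}_y(z,z)$; plugging in the formulas $\mathbf{P}_y(T_z<T_y^+)=[\deg(y)\Reff(y\leftrightarrow z)]^{-1}$ and $\mathbf{G}_y(z,z)=\deg(z)\Reff(y\leftrightarrow z)$ combines to give $\sum_{w\sim y}\mathbf{G}_y(w,z)=\deg(z)$. Dividing through by $\deg(z)$ yields $\Delta_x\Psi_{\delta_z}(x)=\id(x=z)-\id(x=y)$, and the general finitely supported case follows by linearity.

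To conclude I would apply Lemma~\ref{lem:harmonic_measures_as_limits} to obtain a sequence $(\mu_n)_{n\ge 1}$ of finitely supported probability measures with $\mu_n(v)\to 0$ for every $v\in V$ and $h_B(\cdot)=\lim_n\mathbf{P}_{\mu_n}(X_{T_B}=\cdot)$ for every finite $B\subset V$. Specialising to $B=\{x,y\}$ gives $\Psi_{\mu_n}(x)\to a^h(x,y)$ pointwise in $x$; since the Laplacian at any fixed vertex depends on only finitely many pointwise values of the function, this forces $\Delta_x\Psi_{\mu_n}(x)\to\Delta_x a^h(x,y)$, and comparing with $\Delta_x\Psi_{\mu_n}(x)=\mu_n(x)-\id(x=y)\to-\id(x=y)$ yields the desired identity. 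The only nontrivial input is the excursion identity $\sum_{w\sim y}\mathbf{G}_y(w,z)=\deg(z)$ used to compute $\Delta_x\mathbf{G}_y(y,z)$; everything else is linearity and a pointwise limit.
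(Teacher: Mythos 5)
Your proof is correct, and it takes a genuinely different route from the paper's. The paper first reduces to the case of \emph{extremal} $h\in\c{H}$ (using that $h\mapsto a^h$ is affine), then invokes Lemma~\ref{L: extremal generators} to produce a sequence of vertices $v_n\to h$, identifies $a^h(x,y)$ as the limit of the classical potential kernels $a^n(x,y)=\deg(y)^{-1}(\Gr_{v_n}(y,y)-\Gr_{v_n}(x,y))$ via three Green's function identities, and inherits the Laplacian identity from the computation $\Delta a^n(\cdot,y)=-\id(\cdot=y)$ (done in the preceding ``Doob transforms'' discussion) together with the fact that $v_n$ eventually leaves any finite set. You instead work directly with an arbitrary $h\in\c{H}$: you establish the explicit formula $\Delta_x\Psi_\mu(x)=\mu(x)-\id(x=y)$ for every finitely supported probability measure $\mu$ (the Dirac case amounting to $\Delta_x\mathbf{G}_y(x,z)=\deg(z)\id(x=z)-\deg(z)\id(x=y)$, with the key excursion identity $\sum_{w\sim y}\mathbf{G}_y(w,z)=\deg(z)$), and then pass to the limit via Lemma~\ref{lem:harmonic_measures_as_limits}. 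This sidesteps both the extremal reduction and Lemma~\ref{L: extremal generators} entirely, and also makes explicit the "error term" $\mu_n(x)$ in the approximate Laplacian identity, which the paper's route handles implicitly through $v_n\to\infty$. Your approach is a bit more self-contained and arguably more transparent; the paper's has the virtue of exhibiting $a^h$ as a genuine limit of classical potential kernels, a formula it explicitly wants on record (\eqref{eq:PK_limit}) for later use. Both arguments rest on the same underlying Green's function manipulations.
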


\begin{proof}
Since the map $h\mapsto a^h$ is affine and the equality \eqref{eq: PK harmonic} is linear, it suffices to prove the lemma in the case that $h$ is extremal. 
By Lemma~\ref{L: extremal generators}, there exists a sequence of vertices $(v_n)_{n\geq 1}$ such that $v_n$ converges to $h$. 
For each $n \geq 1$ we define
\[
a^n(x,y)=\frac{\Gr_{v_n}(y, y)}{\deg(y)} - \frac{\Gr_{v_n}(x, y)}{\deg(y)}.
\]
 and claim that
\begin{equation}
\label{eq:PK_limit}
	a^h(x, y) = \lim_{n \to \infty} a^n(x,y)
\end{equation}
for every $x,y\in V$.
(Note that this limit formula is often taken as the \emph{definition} of the potential kernel.) 
We will prove \eqref{eq:PK_limit} with the aid of three standard identities for the Greens function:
\begin{enumerate}
	\item By the strong Markov property, $\Gr_{z}(x, y)$ is equal to $\mathbf{P}_x(T_y<T_z)\Gr_{z}(y, y)$ for every three distinct vertices $x$, $y$, and $z$.
	\item By the strong Markov property, $\Gr_x(y,y)$ is equal to $\mathbf{P}_x(T_y<T_x^+)^{-1}$ for every pair of distinct vertices $x$ and $y$. It follows in particular that $\deg(y)^{-1}\Gr_x(y,y)=\Reff(x\leftrightarrow y)$ and,  since the effective resistance is symmetric in $x$ and $y$, that $\deg(y)^{-1}\Gr_x(y,y) = \deg(x)^{-1}\Gr_y(x,x)$.
	\item By time-reversal, $\deg(x)\Gr_{z}(x, y)$ is equal to $\deg(y)\Gr_{z}(y, x)$ for every three distinct vertices $x$, $y$, and $z$.
\end{enumerate}
Applying these three identities in order yields that
%
\begin{align*}
a^n(x,y)&=\frac{\Gr_{v_n}(y, y)}{\deg(y)}\mathbf{P}_x(T_y> T_{v_n})
=\frac{\Gr_{y}(v_n, v_n)}{\deg(v_n)}\mathbf{P}_x(T_y> T_{v_n}) &= \frac{\Gr_{y}(x, v_n)}{\deg(v_n)} = \frac{\Gr_{y}(v_n, x)}{\deg(x)}
\end{align*}
whenever $x$, $y$, and $v_n$ are distinct.
Applying the first and second identities a second time then yields that
\begin{align}
a^n(x,y)
= \mathbf{P}_{v_n}(T_x<T_y)\frac{\Gr_{y}(x, x)}{\deg(x)} = \mathbf{P}_{v_n}(T_x<T_y) \Reff(x\leftrightarrow y)
\end{align}
whenever $x$, $y$, and $v_n$ are distinct. This is easily seen to imply the claimed limit formula \eqref{eq:PK_limit}.

\end{proof}

In light of this lemma, we define $\c{P}_o$ to be the space of non-negative functions $a:V\to[0,\infty)$ with $a(o)=0$ and $\Delta a(x) =-\id(x=o)$, so that $a^h(\,\cdot\,,o)$ belongs to $\c{P}_o$ for each $o\in V$ and $h\in \c{H}$ by Lemma~\ref{lem:PK_harmonic}. We will later show that the map $h\mapsto a^h$ is an affine isopmorphism between the two convex spaces $\c{H}$ and $\c{P}_o$. We first describe how elements of $\c{P}_o$ can be used to define Doob transformed walks.

\paragraph{Doob transforms and singular conditioning.} We now define the Doob transform associated to an element of the space $\c{P}_o$.
Given $a\in \c{P}_o$, we define $\wh{X}^a$ to be the Doob $a$-transform of the simple random walk $X$ on $G$, so that $\wh{X}^a$ has state space $\{x\in V: x=o$ or $a(x)>0\}$ and transition probabilities given by
\[
	\wh{p}^{\: a}(x, y) := \begin{cases}
		\frac{a(y)}{a(x)} p(x, y) & \text{if } x \neq o \\
		a(y) & \text{if } x = o, y \sim o
	\end{cases}
\]
where $p$ is the transition kernel for the simple random walk. Similarly, given $h \in \c{H}$, we write $\wh{X}^h=\wh{X}^{a^h(\cdot,o)}$ where $a^h$ is the potential kernel associated to $h$.
Informally, we think of $\wh{X}^h$  as the walk that is ``conditioned to go to $h$ before returning to $o$''. (In particular, when the harmonic measure from infinity is unique and $\c{H}$ and $\c{P}_o$ are singleton sets, we think of the associated Doob transform as the random walk conditioned to never return to $o$.) We write $\wh{\mathbf{P}}_o^a$ or $\wh{\mathbf{P}}^h_o$ for the law of $\wh{X}^a$ or $\wh{X}^h$ as appropriate.


As before, it follows from this definition that if $a\in \c{P}_o$ and we write $X[0,m]$ for the initial segment consisting of the first $m$ steps of the random walk $X$ then
\begin{multline}
\label{eq:Doob_transform_whole_path}
\wh{\mathbf{P}}^a_o(\wh{X}^a[0,m]=\gamma)=\prod_{i=1}^m \wh{p}^{\: a}(\gamma_{i-1}, \gamma_i) =
a(\gamma_{1})\prod_{i=2}^m \frac{a(\gamma_i)}{a(\gamma_{i-1})} p(\gamma_{i-1}, \gamma_i) \\
= a(\gamma_{m})\prod_{i=2}^m p(\gamma_{i-1}, \gamma_i)
=
 \deg(o)a(\gamma_m) \mathbf{P}_o(X[0, m] = \gamma)
\end{multline}
for every finite path $\gamma=(\gamma_0,\ldots,\gamma_m)$ with $\gamma_0=o$ and $\gamma_i \neq o$ for every $i>0$. Summing over all paths $\gamma$ that begin at $o$, end at some point $x\neq o$, and do not visit $o$ or $x$ at any intermediate point yields in particular that if $h\in \c{H}$ then
\begin{equation}
\label{eq:hitting prop CRW}
\wh{\mathbf{P}}^h_o(\wh{X} \text{ hits $x$}) = \deg(o) a^h(x,o) \mathbf{P}_o(T_x < T^+_o) = h_{o,x}(x),
\end{equation}
where the last equality follows from \eqref{eq: PK def} and the definition of the effective resistance.

\begin{lemma}
Let $G=(V,E)$ be a recurrent graph and let $a\in \c{P}_o$. Then the associated Doob-transformed walk $\wh{X}^a$ is transient.
\end{lemma}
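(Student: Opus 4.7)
The plan is to exploit the boundary condition $a(o)=0$ that is built into the definition of $\c{P}_o$. Since
\begin{equation*}
\wh{p}^{\:a}(x,o)=\frac{a(o)}{a(x)}\,p(x,o)=0
\end{equation*}
for every $x\neq o$ in the state space of $\wh{X}^a$, the Doob-transformed walk cannot return to $o$ from any other vertex. Combined with the fact that $\wh{X}^a$ leaves $o$ on its first step with total probability $\sum_{y\sim o}a(y)=-\Delta a(o)=1$ (using $a(o)=0$ and $\Delta a(o)=-1$), this immediately gives $\wh{\mathbf{P}}^a_o(T_o^+=\infty)=1$, so $o$ itself is visited exactly once.

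To upgrade this to the statement that \emph{every} vertex is visited finitely often almost surely, I would apply the path identity~\eqref{eq:Doob_transform_whole_path}. Since $\wh{X}^a$ almost surely does not revisit $o$, the restriction ``$\gamma_i\neq o$ for every $i>0$'' in \eqref{eq:Doob_transform_whole_path} is automatic, so summing over all paths of length $m\geq 1$ from $o$ ending at a fixed $x\neq o$ yields
\begin{equation*}
\wh{\mathbf{P}}^a_o(\wh{X}^a_m=x)=\deg(o)\,a(x)\,\mathbf{P}_o\bigl(X_m=x,\,T_o^+>m\bigr).
\end{equation*}
Summing over $m\geq 0$ gives the Green's function identity
\begin{equation*}
\sum_{m\geq 0}\wh{\mathbf{P}}^a_o(\wh{X}^a_m=x)=\deg(o)\,a(x)\,\mathbf{G}_o(o,x),
\end{equation*}
where $\mathbf{G}_o$ is the Green's function of the simple random walk killed on hitting $o$. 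On a recurrent graph this is finite at every vertex, for instance via $\mathbf{G}_o(x,x)=\mathbf{P}_x(T_o<T_x^+)^{-1}$ combined with the strong Markov property. Hence the expected total number of visits of $\wh{X}^a$ to any vertex is finite, and Borel--Cantelli gives transience.

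The argument is essentially automatic once the vanishing boundary condition $a(o)=0$ is isolated, so I do not anticipate any real obstacle beyond the bookkeeping above.
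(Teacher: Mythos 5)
Your proof is correct, and it takes a genuinely different route from the paper. Both arguments rest on the same path identity~\eqref{eq:Doob_transform_whole_path} specialized to $\wh{\mathbf{P}}^a_o(\wh{X}^a_n=v)=\deg(o)a(v)\mathbf{P}_o(X_n=v,\,T_o^+>n)$, but they use it differently. You sum over $n$ to compute the expected number of visits of $\wh{X}^a$ to each vertex $x$, obtaining $\deg(o)a(x)\mathbf{G}_o(o,x)$, observe that this Green's function of the walk killed at $o$ is finite on a connected recurrent graph, and conclude transience by Borel--Cantelli; as a preliminary you note that $a(o)=0$ forces the transformed chain to never revisit $o$, so the constraint in the path identity is free. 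The paper instead observes that $(a(\wh{X}^a_n)^{-1})_{n\geq 1}$ is a non-negative martingale, hence converges a.s., and then uses the same identity to bound $\wh{\mathbf{P}}^a_o(a(\wh{X}^a_n)\leq M)\leq M\deg(o)\mathbf{P}_o(T_o^+>n)\to 0$, deducing that $a(\wh{X}^a_n)\to\infty$ a.s.\ and therefore the walk is transient. Your version is a bit more elementary (no martingale convergence needed) and gives the quantitative finiteness of the Green's function of $\wh{X}^a$; the paper's version yields as a byproduct the stronger statement that $a(\wh{X}^a_n)\to\infty$, which is the kind of ``the walk goes to the boundary point $h$'' statement one expects from Doob $h$-transforms and is spiritually consistent with the rest of Section~2. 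Either argument suffices for the paper's purposes.
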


\begin{proof}
One can easily verify from the definitions that the sequence of reciprocals
$(a(\wh{X}^a_n)^{-1})_{n\geq 1}$
is a non-negative martingale with respect to its natural filtration, and hence converges almost surely to some limiting random variable, which it suffices to prove is zero almost surely.
It follows from the identity \eqref{eq:Doob_transform_whole_path} that
\[
\wh{\mathbf{P}}^a_o(a(\wh{X}^a_n)\leq M) = \sum_{v} \id(a(v)\leq M) \deg(o)a(v) \mathbf{P}_o(X_n=v, T^+_o > n) \leq M\deg(o)\mathbf{P}_o(T^+_o > n),
\]
for every $n,M\geq 1$. Since $G$ is recurrent, the right hand side tends to zero as $n\to\infty$ for each fixed $M$. It follows that $\limsup_{n\to\infty} a(\wh{X}^a_n) = \infty$ almost surely, and hence that $\lim_{n\to\infty} a(\wh{X}^a_n)=\infty$ almost surely since the limit is well-defined almost surely. This implies that $\wh{X}^a$ is transient.
\end{proof}



\subsection{An affine isomorphism}

Let $G=(V,E)$ be recurrent, fix $o\in V$, and let $\c{P}_o$ denote the set of positive functions $a:V\to [0,\infty)$ with $a(o)=0$ that satisfy $\Delta a(\cdot) = -\id(\,\cdot=o)$. As we have seen, for each $h\in \c{H}$ the potential kernel $a^h(\cdot,o)$ defines an element of $\c{P}_o$. Moreover, the map sending $h\mapsto a^h(\,\cdot\,,o)$ is affine in the sense that if $h=\theta h_1 + (1-\theta) h_2$ then $a^h(\,\cdot\,,o)=\theta a^{h_1}(\,\cdot\,,o)+(1-\theta)a^{h_2}(\,\cdot\,,o)$. We wish to show that this map defines an affine \emph{isomorphism} between $\c{H}$ and $\c{P}_o$ in the sense that it is bijective (in which case its inverse is automatically affine). We begin by constructing the inverse map from $\c{P}_o$ to $\c{H}$.


\begin{lemma}
\label{lem:PK_surjectivity}
Let $G=(V,E)$ be a infinite, connected, locally finite recurrent graph and let $o\in V$. For each $a\in \c{P}_o$ 
there exists a unique $h\in \c{H}$ satisfying
\[h_B(u)=\wh{\mathbf{P}}^a_o(\wh{X}^a \text{ visits $B$ for the last time at $u$})\]
for every finite set $B$ containing $o$. Moreover, this $h$ satisfies $a^h(x,o)=a(x)$ for every $x\in V$.
\end{lemma}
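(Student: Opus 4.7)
I would construct $h$ via the last-exit distribution of the Doob walk, verify the axioms, and identify the potential kernel.

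\textit{Construction.} For each finite $B\ni o$ set $h_B(u) := \wh{\mathbf{P}}_o^a(\wh{X}^a \text{ visits } B \text{ for the last time at }u)$. Since $\wh X^a$ is transient (by the preceding lemma) and $B$ is finite, the last-visit time $L_B := \sup\{n\geq 0: \wh X^a_n\in B\}$ is almost surely finite, and nonnegative because $o\in B$. At time $L_B+1$ the walk lies in $V\setminus B$, so $\wh X^a_{L_B}\in\partial B$; hence $h_B$ is a probability measure on $\partial B$. For finite $B\not\ni o$ I would define $h_B$ via one application of the consistency relation \eqref{eq:consistency} using $B\cup\{o\}$ in place of $B'$.

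\textit{Consistency.} The crux is verifying that
\[
h_B(u) = \sum_{v\in B'} h_{B'}(v)\, \mathbf{P}_v(X_{T_B}=u)
\]
whenever $B\subseteq B'$ both contain $o$. Decomposing $\{\wh X^a_{L_B}=u\}$ over the last-visit time via the strong Markov property yields, for $u\neq o$,
\[
h_B(u) = \wh G^a(o,u)\cdot \wh{\mathbf{P}}_u^a(\wh X^a \text{ avoids }B \text{ after time }0),
\]
and a direct calculation from \eqref{eq:Doob_transform_whole_path} together with the SRW excursion identity $\mathbf{E}_o[\#\{\text{visits to }u\text{ before }T_o^+\}] = \deg(u)/\deg(o)$ gives $\wh G^a(o,u) = \deg(u)\, a(u)$ for every $u\neq o$. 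Decomposing further according to $v=\wh X^a_{L_{B'}}$, and noting that $v\in B$ forces $v=u$, I obtain
\[
h_B(u) = h_{B'}(u) + \wh G^a(o,u)\sum_{v\in B'\setminus B}\wh G^{B,+}(u,v)\,\wh{\mathbf{P}}_v^a(\wh X^a \text{ avoids }B'),
\]
where $\wh G^{B,+}(u,v)$ denotes the Green's function of $\wh X^a$ started from $u$ and killed on the first positive visit to $B$. The key identity is
\[
\wh G^a(o,u)\,\wh G^{B,+}(u,v) \;=\; \wh G^a(o,v)\,\mathbf{P}_v(X_{T_B}=u) \qquad (u\in B\setminus\{o\},\; v\notin B),
\]
which I would prove by first using \eqref{eq:Doob_transform_whole_path} to rewrite $\wh G^{B,+}(u,v) = (a(v)/a(u))\,G_X^{B,+}(u,v)$ in terms of the analogous SRW Green's function, and then invoking the reversibility identity $\deg(u)\,G_X^{B,+}(u,v) = \deg(v)\,\mathbf{P}_v(X_{T_B}=u)$ obtained by reversing SRW paths from $u$ to $v$ that avoid $B$ after time $0$. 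Substitution yields the consistency relation for $u\in B\setminus\{o\}$; the case $u=o\in\partial B$ then follows by probability conservation since both sides sum to $1$ over $u\in\partial B$. Consistency for general $B\subseteq B'$ reduces to this case by adjoining $o$ to both sets and applying the strong Markov property of the SRW at $T_{B\cup\{o\}}$.

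\textit{Identification and uniqueness.} Since $a(o)=0$ the Doob walk leaves $o$ at time $1$ and never returns, so $h_{\{o,x\}}(x) = \wh{\mathbf{P}}_o^a(\wh X^a\text{ hits }x)$. Summing \eqref{eq:Doob_transform_whole_path} over first-passage paths from $o$ to $x$ and using the resistance formula $\mathbf{P}_o(T_x<T_o^+) = (\deg(o)\Reff(o\leftrightarrow x))^{-1}$ gives $h_{\{o,x\}}(x) = a(x)/\Reff(o\leftrightarrow x)$, so that $a^h(x,o) = h_{\{o,x\}}(x)\,\Reff(o\leftrightarrow x) = a(x)$. Uniqueness is immediate from consistency: any $h'\in\c{H}$ agreeing with $h$ on sets containing $o$ must agree with $h$ on all finite sets. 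The main obstacle is the consistency verification in the case $o\in B\subseteq B'$; all other parts are essentially formal once this is in place.
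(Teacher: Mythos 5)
Your construction and identification steps coincide with the paper's; the difference is in how consistency is verified. The paper proves consistency by approximating the last-exit probability through a finite exhaustion $(V_n)$: it writes $h_B(u)$ as a limit of probabilities involving $\wh{X}^a[0,\tau_n]$, applies \eqref{eq:Doob_transform_whole_path} and time-reversal of the \emph{simple} walk in finite volume to express everything in terms of $\mathbf{P}_b(X_{T_B}=u,\,T_o<T_{V_n^c}^+)$ with $b\in V_n^c$, splits at $T_{B'}$ by the strong Markov property, and uses recurrence to kill the error factor $\mathbf{P}_v(T_o<T_{V_n^c}^+)\to 1$. Your argument instead works directly in infinite volume: you decompose $h_B(u)$ via the last-visit to $B'$, reduce everything to Green's functions of the Doob walk, compute $\wh G^a(o,u)=\deg(u)\,a(u)$ from the excursion measure of the SRW, and then isolate the single reversibility identity $\wh G^a(o,u)\,\wh G^{B,+}(u,v)=\wh G^a(o,v)\,\mathbf{P}_v(X_{T_B}=u)$ as the engine of the consistency relation. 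Both routes ultimately rest on \eqref{eq:Doob_transform_whole_path} and SRW reversibility, but your version avoids the exhaustion/limit bookkeeping and makes the potential-theoretic structure (last-exit decomposition plus a Green's function duality) explicit, at the cost of having to introduce and manipulate the killed Green's function $\wh G^{B,+}$. Two small notes: the formula $\wh{\mathbf{P}}_u^a(\wh X[0,m]=\gamma)=\tfrac{a(\gamma_m)}{a(u)}\mathbf{P}_u(X[0,m]=\gamma)$ you invoke for $u\neq o$ is the standard Doob-transform identity rather than literally \eqref{eq:Doob_transform_whole_path} (which is stated for walks from $o$), though it follows just as directly from the definition of $\wh p^a$; and the excursion identity $\mathbf{E}_o[\#\{1\leq m<T_o^+:X_m=u\}]=\deg(u)/\deg(o)$ is exactly the Kac/cycle formula for the reversing measure $\deg(\cdot)$, which is worth citing or briefly proving in a final write-up. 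With those stated, the argument is complete and correct.
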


\begin{proof}[Proof of Lemma~\ref{lem:PK_surjectivity}]
Fix $a\in \c{P}_o$. We define a the family of probability measures $h=(h_B:B\subset V$ finite$)$ by
\[h_B(u)=\wh{\mathbf{P}}^a_o(\wh{X}^a \text{ visits $B$ for the last time at $u$})\]
for every $u\in B$ if $o \in B$ and
\begin{multline*}h_B(u)=\wh{\mathbf{P}}^a_o(\wh{X}^a \text{ visits $B\cup\{o\}$ for the last time at $u$})\\+
\wh{\mathbf{P}}^a_o(\wh{X}^a \text{ visits $B\cup\{o\}$ for the last time at $o$})\mathbf{P}_o(X_{T_B}=u)
\end{multline*}
for every $u\in B$ if $o\notin B$, so that if $o\notin B$ then
\[
h_B(u)=\sum_{v\in B\cup\{o\}}h_{B\cup\{o\}}(v)\mathbf{P}_v(X_{T_B}=u)
\]
for every $u\in V$.
We claim that this defines an element of $\c{H}$. It is clear that $h_B$ is a probability measure that is supported on $\partial B$ for each finite set $B\subset V$; we need to verify that it satisfies the consistency property \eqref{eq:consistency}. Once it is verified that $h\in \c{H}$, the fact that $a=a^h(\cdot,o)$ follows easily from the definition of $a^h$ together with the identity \eqref{eq:Doob_transform_whole_path}, which together yield that
\begin{align*}
a^h(v,o)&=h_{v,o}(v)\Reff(v \leftrightarrow o) = \frac{\wh{\mathbf{P}}^a_o(\wh{X}^a \text{ visits $\{o,v\}$ for the last time at $v$}) }{\deg(o)\mathbf{P}_o(T_v<T^+_o)}\\
&=\frac{\wh{\mathbf{P}}^a_o(\wh{X}^a \text{ hits $v$}) }{\deg(o)\mathbf{P}_o(T_v<T^+_o)}=\frac{\deg(o)a(v)\mathbf{P}_o(T_v<T^+_o)}{\deg(o)\mathbf{P}_o(T_v<T^+_o)}=a(v)
\end{align*}
for each $v\in V$.

We now prove that $h$ satisfies the consistency property \eqref{eq:consistency}. We will prove the required identity in the case $o\in B$, the remaining case $o\notin B$ following from this case and the definition.
Let $B\subseteq B'$ be finite sets with $o\in B$ and let $(V_n)_{n\geq 1}$ be an exhaustion of $V$ by finite sets such that $B' \subseteq V_n$ for every $n\geq 1$. Writing $V_n^c=V\setminus V_n$ for each $n\geq 1$ and $\tau_n$ for the first time the walk visits $V_n^c$, we have that
\begin{align*}
h_B(u) &= \lim_{n\to\infty} \wh{\mathbf{P}}^a_o(\wh{X}[0,\tau_n] \text{ last visits $B$ at $u$})\\
&= \lim_{n\to\infty}\sum_{b \in V_n^c} \wh{\mathbf{P}}^a_o(\wh{X}[0,\tau_n] \text{ last visits $B$ at $u$, $\wh{X}_{\tau_n}=b$})
\end{align*}
and hence by \eqref{eq:Doob_transform_whole_path} and time-reversal that
\begin{align}
h_B(u)
&= \lim_{n\to\infty}\sum_{b \in V_n^c} \deg(o)a(b) \mathbf{P}_o(X[0,\tau_n] \text{ last visits $B$ at $u$, $X_{\tau_n}=b$})
\nonumber\\
&= \lim_{n\to\infty}\sum_{b \in V_n^c} \deg(b)a(b) \mathbf{P}_b(X_{T_B}=u, T_o<T_{V_n^c}^+).\label{eq:h_B_time_reverse}
\end{align}
It follows from this together with the strong Markov property that
\begin{align*}
h_B(u) &= \lim_{n\to\infty}\sum_{v\in B'} \sum_{b \in V_n^c} \deg(b)a(b) \mathbf{P}_b(X_{T_B'}=v, X_{T_B}=u, T_o<T_{V_n^c}^+) \\
&=\lim_{n\to\infty}\sum_{v\in B'} \sum_{b \in V_n^c} \deg(b)a(b) \mathbf{P}_b(X_{T_B'}=v, T_{B'}<T_{V_n^c}^+)\mathbf{P}_v(X_{T_B}=u, T_{o}<T_{V_n^c}^+).
\end{align*}
Now, we have by the strong Markov property that for each $b \in V_n^c$ and $v \in B'$
\begin{align*}
	\bf{P}_b(X_{T_{B'}} = v, T_{o} < T_{V_n^c}^+) = \bf{P}_b(X_{T_{B'}} = v, T_{B'} < T_{V_n^c}^+)\bf{P}_v(T_{V_n^c} > T_o).
\end{align*}
and by recurrence that $\lim_{n\to\infty}\mathbf{P}_v(T_{o}<T_{V_n^c}^+)=1$, so that
\begin{align*} \label{eq:h_B'_time_reverse}
h_B(u) =\lim_{n\to\infty}\sum_{v\in B'} \sum_{b \in V_n^c} \deg(b)a(b) \mathbf{P}_b(X_{T_B'}=v, T_{o}<T_{V_n^c}^+)\mathbf{P}_v(X_{T_B} = u).
\end{align*}
The claimed identity \eqref{eq:consistency} follows from this together with the identity \eqref{eq:h_B_time_reverse} applied to the larger set $B'$.\qedhere

\end{proof}

\begin{theorem}
\label{cor:affine_isomorphism}
Let $G$ be an infinite, recurrent, locally finite graph, and let $o\in V$. The map $h\mapsto a^h(\cdot,o)$ is an affine isomorphism $\c{H}\to \c{P}_o$. In particular, this map identifies extremal elements of $\c{H}$ with extremal elements of $\c{P}_o$.
\end{theorem}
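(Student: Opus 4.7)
The plan is to verify the four properties that make $\Psi : h \mapsto a^h(\cdot,o)$ an affine bijection between $\c{H}$ and $\c{P}_o$. Well-definedness (that $a^h(\cdot,o) \in \c{P}_o$) is immediate from Lemma~\ref{lem:PK_harmonic} together with the non-negativity of $a^h$; affineness follows from the formula $a^h(v,o) = h_{\{v,o\}}(v)\Reff(v\leftrightarrow o)$; and surjectivity is exactly Lemma~\ref{lem:PK_surjectivity}. The extremal correspondence is then automatic, since any affine bijection between compact convex sets identifies extremal points.

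The substantive step is injectivity, which is equivalent to the assertion that the pair measures $\{h_{\{v,o\}}:v\in V\}$ determine $h$. My approach would be as follows. Suppose $h_1, h_2 \in \c{H}$ agree on all pair measures, let $f_B := h_{1,B}-h_{2,B}$, and for each finite $B$ with $o \in B$ introduce the auxiliary function
\[
H(v) := \sum_{w\in B} f_B(w)\,\frac{\Gr_o(v,w)}{\deg(w)},
\]
where $\Gr_o$ is the Green's function of the walk killed at $o$. Since each $\Gr_o(\cdot,w)$ is harmonic on $V \setminus \{o,w\} \supseteq V \setminus B$, the function $H$ is harmonic on $V \setminus B$; it is also bounded, since $\Gr_o(v,w)/\deg(w) \le \Reff(w\leftrightarrow o)$ for every $v$ and $B$ is finite. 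The key point is that $H$ vanishes on $B$. For $v \in B\setminus\{o\}$, applying consistency to the pair $\{v,o\}\subseteq B$ gives
\[
0 = h_{1,\{v,o\}}(v) - h_{2,\{v,o\}}(v) = \sum_{w\in B} f_B(w)\, \mathbf{P}_w(T_v<T_o);
\]
combining this with the reversibility identity $\deg(w)\Gr_o(w,v) = \deg(v)\Gr_o(v,w)$ and the formula $\mathbf{P}_w(T_v<T_o) = \Gr_o(w,v)/\Gr_o(v,v)$ then yields $H(v) = 0$, while $H(o)=0$ is immediate from $\Gr_o(o,\cdot)\equiv 0$. An optional stopping argument at the almost surely finite time $T_B$ (using recurrence) then upgrades $H|_B \equiv 0$ to $H \equiv 0$ on all of $V$.

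To finish, I would compute $\Delta H$ using the identity $\Delta_v \Gr_o(v,w) = \deg(v)\id(v=w)$ valid for $v \ne o$, which gives $\Delta H(v) = f_B(v)$ for $v \in B\setminus\{o\}$; combined with $H\equiv 0$ this forces $f_B \equiv 0$ on $B\setminus\{o\}$, and the normalization $\sum_v f_B(v)=0$ then gives $f_B(o) = 0$. This handles all finite sets containing $o$; the case $o\notin B$ reduces to this one by consistency with $B\cup\{o\}$. The main obstacle I foresee is the step showing $H|_B \equiv 0$, which requires threading the consistency relation through the Green's function reversibility in the right way; everything afterwards is a standard application of harmonic function theory on a recurrent graph.
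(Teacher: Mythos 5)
Your proof is correct, and the substantive step (injectivity) is handled by a genuinely different argument from the paper's. The paper fixes a finite set $B \ni o$ and shows directly that the finite linear system
\[
h_{x,o}(x) = \sum_{y\in\partial B} A(x,y)\,h_B(y), \qquad A(x,y):=\mathbf{P}_y(T_x<T_o),
\]
has a unique solution by proving that the matrix $A$ indexed by $\partial B$ is invertible: the key observation is the renewal decomposition $A(x,y)=\mathbf{P}_x(T_x^+\geq T_o)\sum_{n\geq 0}Q^n(x,y)$ for a substochastic irreducible $Q$, giving $A^{-1}=\mathbf{P}_x(T_x^+\geq T_o)^{-1}(I-Q)$. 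Your argument instead builds the auxiliary potential $H=\sum_{w\in B}f_B(w)\Gr_o(\cdot,w)/\deg(w)$, shows it is bounded, harmonic off $B$, and vanishes on $B$ (using consistency plus Green's-function reversibility), invokes optional stopping and recurrence to force $H\equiv 0$, and then recovers $f_B$ by taking Laplacians. Both are valid; the paper's route is finite-dimensional linear algebra and slightly more economical (it shows $h_B$ is already determined by $(h_{x,o}(x):x\in\partial B)$ alone), while yours is a cleaner potential-theoretic argument that sits more naturally alongside the discussion of $\Gr_o$ and $\Reff$ in the proof of Lemma~\ref{lem:PK_harmonic}, at the mild cost of requiring agreement of pair measures over all of $B$ rather than just $\partial B$. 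All the individual identities you invoke (e.g.\ $\mathbf{P}_w(T_v<T_o)=\Gr_o(w,v)/\Gr_o(v,v)$, $\deg(w)\Gr_o(w,v)=\deg(v)\Gr_o(v,w)$, $\Delta_v\Gr_o(v,w)=\deg(v)\id(v=w)$ for $v\neq o$, and the bound $\Gr_o(v,w)/\deg(w)\leq\Reff(w\leftrightarrow o)$) check out.
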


\begin{proof}
It remains only to prove that $h\mapsto a^h$ is injective. To prove this it suffices by definition of $a^h$ to prove that $h_B$ is determined by $(h_{x,o}(x):x\in \partial B)$ for each finite set $B\subset V$ containing the vertex $o$. Fix one such set $B$. We have by definition of $\c{H}$ that
\[h_{x,o}(x)=\sum_{y\in \partial B} h_B(y) \mathbf{P}_y(T_x<T_o)=\sum_{y\in \partial B} A(x,y) h_B(y)\]
for each $x\in \partial B$ where $A(x,y):=\mathbf{P}_y(T_x<T_o)$ for each $x,y\in \partial B$, so that it suffices to prove that the matrix $A$ (which is indexed by $\partial B$) is invertible. Define a matrix $Q$ indexed by $\partial B$ by
\[
Q(x,y)=\mathbf{P}_y(T_{\partial B}^+<T_o, X_{T_{\partial B}^+}=x).
\]
Then we have by the strong Markov property that
\begin{multline*}
A(x,y) - \id(x=y)\mathbf{P}_x(T_x^+\geq T_o) = \mathbf{P}_y(T_x^+<T_o) \\= \sum_{z\in \partial B} \mathbf{P}_z(T_x<T_o)Q(z,y) = \mathbf{P}_x(T_x^+\geq T_o)Q(x,y) +\sum_{z\in \partial B} \mathbf{P}_z(T_x^+<T_o)Q(z,y)
\end{multline*}
and hence inductively that
\[
\mathbf{P}_y(T_x^+<T_o) = \mathbf{P}_x(T_x^+\geq T_o)\sum_{i=1}^n Q^n(x,y)+\sum_{z\in \partial B} \mathbf{P}_z(T_x^+<T_o)Q^n(z,y)
\]
for every $n\geq 1$. Since $Q$ is irreducible and substochastic, we can take the limit as $n\to\infty$ to obtain that
\[
A(x,y) = \id(x=y)\mathbf{P}_x(T_x^+\geq T_o) + \mathbf{P}_y(T_x^+<T_o) = \mathbf{P}_x(T_x^+\geq T_o) \sum_{i=0}^\infty Q^n(x,y)
\]
for every $x,y\in \partial B$.
It follows by a standard argument that the matrix $A$ is invertible with inverse $A^{-1}=\mathbf{P}_x(T_x^+\geq T_o)^{-1}(1-Q)$ as required.
\end{proof}


\subsection{The Liouville property for extremal Doob transforms}

In this section we prove a kind of tail-triviality property of the Doob-transformed walk corresponding to an extremal point $h\in \c{H}$. Letting $G=(V,E)$ be a graph, we recall that an event $A\subseteq V^\NN$ is said to be \emph{invariant} if $(x_0,x_1,\ldots)\in A$ implies that $(x_1,x_2,\ldots)\in A$ for every $(x_0,x_1,\ldots)\in V^\NN$. 

\begin{theorem}
\label{thm:Liouville}
Let $G=(V,E)$ be an infinite, connected, recurrent, locally finite graph and let $o\in V$.
If $h\in \c{H}$ is extremal then the Doob transformed random walk $\hat{X}^h$ does not have any non-trivial invariant events: If $A \subseteq V^\NN$ is an invariant event then $\wh{\mathbf{P}}^h_o(A)\in \{0,1\}$.
\end{theorem}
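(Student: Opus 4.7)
The plan is to argue by contrapositive: if $\wh{X}^h$ admits a shift-invariant event $A$ with $p := \wh{\mathbf{P}}^h_o(A) \in (0,1)$, then I will show that $h$ is not extremal in $\c{H}$, contradicting the hypothesis. Consider the function $f(x) := \wh{\mathbf{P}}^h_x(A)$ on the state space of $\wh{X}^h$. The shift-invariance of $A$ combined with the Markov property gives $f(x) = \sum_y \wh{p}^{\:h}(x,y) f(y)$ at every state $x$, so $f$ is $\wh{X}^h$-harmonic. Under $\wh{\mathbf{P}}^h_o$ these same ingredients yield $f(\wh{X}^h_n) = \wh{\mathbf{E}}^h_o[\id_A \mid \mathcal{F}_n]$, which by L\'evy's upward 0-1 law converges almost surely to $\id_A$; since $f(o) = p \in (0,1)$ but $\id_A \in \{0,1\}$, $f$ must be non-constant.

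Setting $a := a^h(\cdot,o) \in \c{P}_o$, the heart of the argument is to use $f$ to exhibit $a$ as a non-trivial convex combination of two distinct elements of $\c{P}_o$. I would define $a_1 := a\, f$ and $a_2 := a\,(1-f)$, both of which are nonnegative and vanish at $o$. Clearing denominators in the harmonicity of $f$ at a vertex $x \neq o$ gives $\deg(x) a(x) f(x) = \sum_{y \sim x} a(y) f(y)$, which is precisely the identity $\Delta a_1(x) = 0$; consequently $\Delta a_2(x) = \Delta a(x) - \Delta a_1(x) = 0$ as well. At the vertex $o$, a direct calculation gives $\Delta a_1(o) = -\sum_{y \sim o} a(y) f(y)$, and the special transition $\wh{p}^{\:h}(o,y) = a(y)$ together with the harmonicity of $f$ at $o$ identifies this sum with $f(o) = p$. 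The normalization $\sum_{y \sim o} a(y) = 1$, coming from $\Delta a(o) = -1$, then gives $\Delta a_2(o) = -(1-p)$.

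It follows that $a_1/p$ and $a_2/(1-p)$ both lie in $\c{P}_o$, are distinct because $f$ is non-constant, and satisfy
\[
a \;=\; p\cdot\frac{a_1}{p} + (1-p)\cdot\frac{a_2}{1-p}.
\]
Thus $a$ is not extremal in $\c{P}_o$, and by the affine isomorphism of Theorem~\ref{cor:affine_isomorphism}, $h$ is not extremal in $\c{H}$, which is the desired contradiction.

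The main obstacle I anticipate is purely the bookkeeping at the vertex $o$, where the condition $a(o) = 0$ and the exceptional form of the transition $\wh{p}^{\:h}(o,\cdot) = a(\cdot)$ force $\Delta a_1(o)$ and $\Delta a_2(o)$ to be computed by hand rather than inherited from the harmonicity of $f$ on $V \setminus \{o\}$. Beyond this, the argument is a direct analogue of the classical minimal-harmonic-function decomposition from Martin boundary theory, translated via Theorem~\ref{cor:affine_isomorphism} into the language of extremal harmonic measures from infinity.
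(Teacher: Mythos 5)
Your proof is correct and takes essentially the same approach as the paper: both establish via L\'evy's upward 0--1 law that $x\mapsto\wh{\mathbf{P}}^h_x(A)$ is nonconstant, and both then write $a(\cdot) = a(\cdot)\,\wh{\mathbf{P}}^h_\cdot(A) + a(\cdot)\,\wh{\mathbf{P}}^h_\cdot(A^c)$ and normalize the two summands by $\wh{\mathbf{P}}^h_o(A)$ and $\wh{\mathbf{P}}^h_o(A^c)$ to obtain a nontrivial convex decomposition of $a$ inside $\c{P}_o$, contradicting extremality via Theorem~\ref{cor:affine_isomorphism}. Your writeup merely carries out explicitly the Laplacian bookkeeping at $o$ that the paper leaves to the reader.
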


\begin{proof}
It suffices to prove the corresponding statement for $\wh{X}^a$ when $a$ is an extremal element of $\c{P}_o$.
Suppose not, so that $A$ is a non-trivial invariant event. We have by Levy's 0-1 law that 
\begin{equation}
\label{eq:Levy}
\mathbf{P}_o(\wh{X}^a\in A  \mid \wh{X}^a_1,\ldots,\wh{X}^a_n)\to \id(\wh{X}^a\in A) \text{ almost surely as $n\to\infty$}.
\end{equation} 
Moreover, we also have by invariance that
\[
\wh{\mathbf{P}}^a_x(\wh{X}^a \in A) = \sum_{y \in V} \frac{a(y)}{a(x)} p(x,y) \wh{\mathbf{P}}^a_y(\wh{X}^a \in A)
\]
and that
\[
\wh{\mathbf{P}}^a_o(\wh{X}^a\in A) = \sum_{y \in V} a(y) \wh{\mathbf{P}}^a_y(\wh{X}^a \in A).
\]
Since similar inequalities hold when we replace $A$ by $A^c$ it follows that we can write $a$ as a non-trivial convex combination of two elements of $\c{P}_o$
\[
a(x)= \wh{\mathbf{P}}^a_o(\wh{X}^a \in A)  \cdot \frac{a(x)\wh{\mathbf{P}}^a_x(\wh{X}^a \in A)}{\wh{\mathbf{P}}^a_o(\wh{X}^a \in A) } + \wh{\mathbf{P}}^a_o(\wh{X}^a \notin A)  \cdot \frac{a(x)\wh{\mathbf{P}}^a_x(\wh{X}^a \notin A)}{\wh{\mathbf{P}}^a_o(\wh{X}^a \notin A) },
\]
these two factors being different by \eqref{eq:Levy}, contradicting extremality of $a$.
\end{proof}

\begin{remark}
Underlying this proposition is the fact that once we fix $a\in\c{P}_o$, we can identify $\c{P}_o$ with the Martin boundary of the conditioned walk $\wh{X}^a$. Theorem~\ref{thm:Liouville} is the recurrent version of the fact that Doob transforming by an extremal element of the Martin boundary yields a process with trivial invariant sigma-algebra.
\end{remark}

For our purposes, the most important output of the Liouville property is the following proposition, which lets us easily tell apart the trajectories of two different Doob transformed walks $\wh{X}^h$ and $\wh{X}^{h'}$ by looking at any infinite subset of their traces (and, in particular, from their loop-erasures).

\begin{proposition}
\label{prop:identifying_h_from_trace}
Let $h,h'$ be distinct extremal elements of $\c{H}$ and let $\wh{X}^h$ be the Doob-transformed simple random walk corresponding to $h$. Then 
\[
\frac{a^{h'}(\wh{X}^h_n,o)}{a^h(\wh{X}^h_n,o)} \to 0 
\]
almost surely as $n\to\infty$. 
\end{proposition}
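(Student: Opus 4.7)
The plan is to study $M_n := a^{h'}(\wh{X}^h_n, o)/a^h(\wh{X}^h_n, o)$, defined for $n \geq 1$ (note that $\wh{X}^h$ never revisits $o$ after time $0$ since $a^h(o,o)=0$, so the denominator is strictly positive), and show that this non-negative $\wh{\mathbf{P}}^h_o$-martingale tends to zero almost surely. The martingale identity is a direct computation: Lemma~\ref{lem:PK_harmonic} (harmonicity of $a^{h'}(\cdot,o)$ away from $o$) together with the transition kernel $\wh{p}^{a^h}(x,y)=[a^h(y,o)/a^h(x,o)]\,p(x,y)$ for $x\neq o$ yields $\E[M_{n+1}\mid \wh{X}^h_n=x]=M_n$ on $\{\wh{X}_n^h = x\}$, and $\E[M_1]=\sum_{y\sim o}a^{h'}(y,o)=1$ using $\Delta a^{h'}(o)=-1$. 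Hence $M_n$ converges almost surely to some finite non-negative $M_\infty$, with $\E[M_\infty] \leq 1$ by Fatou.

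Next, I would observe that since $M_n = f(\wh{X}^h_n)$ with $f(x) := a^{h'}(x,o)/a^h(x,o)$, the event $\{\lim_n M_n = c\}$ is shift-invariant on trajectory space for each $c$. Theorem~\ref{thm:Liouville}, applied to the extremal $h$, then forces $M_\infty$ to equal some deterministic constant $c \in [0,1]$ almost surely.

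To rule out $c > 0$, I would combine this with the change of measure built into \eqref{eq:Doob_transform_whole_path}: summing that identity shows that $M_n$ is precisely the Radon--Nikodym density of $\wh{\mathbf{P}}^{h'}_o$ with respect to $\wh{\mathbf{P}}^h_o$ on $\sigma(\wh{X}[0,n])$, so passing to the limit, the Lebesgue decomposition gives $\wh{\mathbf{P}}^{h'}_o(A) \geq c\,\wh{\mathbf{P}}^h_o(A)$ for every measurable $A \subseteq V^{\NN}$. Swapping the roles of $h$ and $h'$, the same argument shows that $1/M_n = a^h(\wh{X}^{h'}_n,o)/a^{h'}(\wh{X}^{h'}_n,o)$ is a non-negative $\wh{\mathbf{P}}^{h'}_o$-martingale whose limit is almost surely a constant $c' \in [0,1]$, so under $\wh{\mathbf{P}}^{h'}_o$ we have $M_n \to 1/c'$ almost surely (with the convention $1/0=\infty$). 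Applying the inequality to the $\wh{\mathbf{P}}^h_o$-almost sure event $\{M_n \to c\}$ shows it has $\wh{\mathbf{P}}^{h'}_o$-probability at least $c > 0$, forcing $1/c' = c$, i.e.\ $cc' = 1$. Combined with $c,c' \leq 1$, this gives $c = c' = 1$; then $M_n$ is uniformly integrable, so the Radon--Nikodym derivative on the full trajectory $\sigma$-algebra is $M_\infty = 1$ and $\wh{\mathbf{P}}^{h'}_o = \wh{\mathbf{P}}^h_o$. But Lemma~\ref{lem:PK_surjectivity} reconstructs $h$ from the law of $\wh{X}^h$ via last-visit probabilities, so this forces $h = h'$, contradicting the hypothesis. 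Hence $c = 0$.

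The main obstacle I anticipate is the third paragraph: combining the two separate Liouville-constancy statements with the one-sided absolute continuity inequality to squeeze out $cc' = 1$, while carefully tracking both the case $c'=0$ (where $M_n\to\infty$ under $\wh{\mathbf{P}}^{h'}_o$ is incompatible with the finite limit $c$) and the fact that $M_n$ need not be \emph{a priori} uniformly integrable. The martingale calculation, the shift-invariance of $\{M_\infty = c\}$, and the final identification $\wh{\mathbf{P}}^h_o = \wh{\mathbf{P}}^{h'}_o \Rightarrow h = h'$ via Lemma~\ref{lem:PK_surjectivity} are comparatively routine once the setup is in place.
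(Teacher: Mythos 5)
Your proposal is correct and follows essentially the same route as the paper's proof: both set up the ratio $a^{h'}(\wh{X}^h_n,o)/a^h(\wh{X}^h_n,o)$ and its mirror under $\wh{\mathbf{P}}^{h'}_o$ as non-negative martingales, invoke Theorem~\ref{thm:Liouville} to force their a.s.\ limits to be deterministic constants $c,c'\leq 1$, and then compare the two walks to deduce $cc'=1$, hence $c=c'=1$, hence $h=h'$, a contradiction. The only real difference is in the middle step: the paper constructs an explicit coupling of $\wh{X}^h$ and $\wh{X}^{h'}$ that coincides forever with probability $\geq c$, whereas you obtain the inequality $\wh{\mathbf{P}}^{h'}_o(A)\geq c\,\wh{\mathbf{P}}^h_o(A)$ via the Lebesgue decomposition of the limiting Radon--Nikodym derivative; these are two phrasings of the same fact and your version is arguably a bit more streamlined. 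One small technical point worth noting (shared by the paper's write-up): $\E M_1=\sum_{y\sim o}a^{h'}(y,o)\id\bigl(a^h(y,o)>0\bigr)$ can be strictly less than $1$, since the state space of $\wh{X}^h$ may omit some neighbours of $o$ where $a^{h'}$ is positive; $M_n$ is then a martingale only for $n\geq 1$ and one should read $\E M_n\leq 1$. This changes nothing downstream, since Fatou still gives $c\leq 1$ (and similarly $c'\leq 1$), and in the case $c=c'=1$ one recovers $\E M_1=1$ after the fact. Away from $o$, the harmonicity of $a^h(\cdot,o)$ ensures that the zero set of $a^h(\cdot,o)$ on $V\setminus\{o\}$ is a union of components of $G\setminus\{o\}$, so the martingale property for $n\geq 1$ is genuine.
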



\begin{proof}
We prove the corresponding statement in which $a,a'$ are distinct extremal elements of $\c{P}_o$. Let $\wh{X}$ and $\wh{X}'$ have laws $\wh{\mathbf{P}}_o^a$ and $\wh{\mathbf{P}}_o^{a'}$ respectively.
One can easily verify from the definitions that
\[
(Z_n)_{n\geq 1}=\left(\frac{a'(\wh{X}_n)}{a(\wh{X}_n)}\right)_{n\geq 1} \qquad \text{ and } \qquad (Z_n')_{n\geq 1}=\left(\frac{a(\wh{X}'_n)}{a'(\wh{X}'_n)}\right)_{n\geq 1}
\]
are both non-negative martingales with respect to their natural filtrations, and hence converge almost surely to some limiting random variables $Z$ and $Z'$. Since $Z$ and $Z'$ are measurable with respect to the invariant $\sigma$-algebras of $\wh{X}$ and $\wh{X}'$ respectively and $a$ and $a'$ are both extremal, there must exist constants $\alpha$ and $\alpha'$ such that $Z=\alpha$ and $Z'=\alpha'$ almost surely.
  We also have that $\E Z_n = \E Z_n ' = 1$ for every $n\geq 1$ and hence that $\alpha,\alpha'\leq 1$. We wish to prove that $\alpha=0$.

It follows from \eqref{eq:Doob_transform_whole_path} that the conditional distributions of the initial segments $\wh{X}[0,m]$ and $\wh{X}'[0,m]$ are the same if we condition on $\wh{X}_m=\wh{X}'_m=v$ for any $v\in V$ for any $v\in V$ and $m\geq 1$ and that 
\[
\frac{\mathbf{P}_o^a(\wh{X}_m=v)}{\mathbf{P}_o^{a'}(\wh{X}^{a'}_m=v)} = \frac{a(v)}{a'(v)}
\]
for every $m\geq 1$ and $v\in V$. If $\alpha>0$ then for every $\varepsilon>0$ there exists $M$ such that the distribution of $\wh{X}_m$ puts mass at least $1-\varepsilon$ on the set of vertices with $a'(v)/a(v) \geq (1-\varepsilon)\alpha $ for every $m\geq M$, and it follows that for each $m\geq M$ there is a coupling of the two walks $\wh{X}'$ and $\wh{X}$ so that their initial segments of length $m$ coincide with probability at least $(1-\varepsilon)^2\alpha$. Taking a weak limit as $m\to\infty$ and $\varepsilon \to 0$, it follows that there exists a coupling of the two walks $\wh{X}'$ and $\wh{X}$ such that the two walks coincide forever with probabilty at least $\alpha>0$. If we couple the walks in this way then on this event we must have that $Z'=1/Z$, which can occur with positive probability only if $\alpha'=1/\alpha$. Since $\alpha,\alpha'\leq 1$ we must have that $\alpha=\alpha'=1$ and that we can couple the two walks to be exactly the same almost surely. This is clearly only possible if $a=a'$, and since  $a\neq a'$ by assumption we must have that $\alpha=0$.
\end{proof}




\subsection{Potential kernels and the uniform spanning tree}

 We now use Lemma~\ref{L: local convergence} to show that the UST of a recurrent graph can always be sampled using a variant of Wilson's algorithm \cite{WilsonAlgorithm,BLPS} in which we `root at a point in $\c{H}$', where again we are thinking intuitively of $\c{H}$ as a kind of boundary at infinity of the graph.
Fix $h \in \mathrm{ex}(\c{H})$ and let $\wh{X}^h$ be the conditioned walk of the previous section. Fix some enumeration $V=\{v_1,v_2,\ldots\}$ of $V$ with  $v_1 = o$. Set $E_0 = \LE(\wh{X}^h[0, \infty))$ (which is well defined because $\wh{X}^h$ is transient) and for each $i\geq 1$ define $E_i$ given $E_{i- 1}$ recursively as follows:
\begin{itemize}
	\item if $v_i \in E_{i - 1}$, set $E_i = E_{i - 1}$ 
	\item otherwise, set $E_i = E_{i -1} \cup \LE(Y[0, \tau))$ where $Y$ is the simple random walk started at $v_i$ and stopped at $\tau$, the hitting time of $E_{i - 1}$. 
\end{itemize}
Last, define $T = \bigcup_{i = 0}^\infty E_i$. We refer to this procedure as \textbf{Wilson's algorithm rooted at $h$}. The random tree $T$ generated by Wilson's algorithm rooted at $h$ is clearly a spanning tree of $G$; the next lemma shows that it is distributed as the UST of $G$.

\begin{lemma}[Wilson meets Doob]
\label{lem:Wilson}
Let $G=(V,E)$ be an infinite, connected, locally finite, recurrent graph and let $h\in \mathrm{ext}(\c{H})$.
	The tree $T$ generated by Wilson's algorithm rooted at $h$ is distributed as the uniform spanning tree of $G$. In particular, the law of $T$ is independent of the chosen enumeration of $V$ and the choice of $h\in \mathrm{ext}(\c{H})$. 
\end{lemma}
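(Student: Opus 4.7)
The plan is to reduce the statement to standard (finite) Wilson's algorithm via a wired exhaustion. Let $V_1\subset V_2\subset\cdots$ be an exhaustion of $V$ by finite sets with $o\in V_1$, and let $G_n^\star$ denote the finite multigraph obtained from $G$ by contracting $V\setminus V_n$ to a single vertex $\partial_n$. By recurrence and \cite[Proposition 5.6]{BLPS}, the UST $T_n$ of $G_n^\star$ converges weakly to the UST $T$ of $G$, and the classical Wilson theorem says that finite Wilson's algorithm on $G_n^\star$ rooted at $\partial_n$, run with the enumeration $v_1,v_2,\ldots$, produces $T_n$. It therefore suffices to show that, for each $n$, the tree $T^{(n)}$ obtained by running the lemma's algorithm but stopping the initial Doob-transformed walk at its last visit $L_n$ to $V_n$ has the same distribution as $T_n$; letting $n\to\infty$ gives the result.

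Since $\wh{X}^h$ is transient, $L_n$ is almost surely finite, and the segment $\wh{X}^h[0,L_n]$ is a path from $o$ to some vertex $u\in\partial V_n$ that never revisits $V_n$ afterward. In particular the loop-erasure $\LE(\wh{X}^h[0,\infty))$ agrees on $V_n$ with the self-avoiding path $\LE(\wh{X}^h[0,L_n])$. The key step is to show that this self-avoiding path has the same distribution as $\LE(X[0,T_{V\setminus V_n}])$, the first branch produced by standard Wilson on $G_n^\star$ starting at $o$. Once this distributional identity is established, the two algorithms are identical after the first branch — both produce loop-erased simple random walks from the remaining vertices stopped on hitting the current tree — so a standard induction over the enumeration gives $T^{(n)}\stackrel{d}{=}T_n$.

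To establish the first-branch identity, I would compare the two path weights on a fixed self-avoiding path $\eta=(o=\eta_0,\ldots,\eta_k=u)$ with $\eta_i\in V_n$ and $u\in\partial V_n$. The Doob-transform formula \eqref{eq:Doob_transform_whole_path}, together with the strong Markov property at $L_n$, decomposes the probability of $\LE(\wh{X}^h[0,L_n])=\eta$ as a sum over underlying walks $\gamma$ with $\LE(\gamma)=\eta$, each contributing $\deg(o)a^h(u,o)\,\mathbf{P}_o(X[0,|\gamma|]=\gamma)\cdot\wh{\mathbf{P}}^h_u(\wh{X}^h\text{ never returns to }V_n)$. The last factor can be evaluated using the last-visit identity $\wh{\mathbf{P}}^h_o(\wh{X}^h\text{ last visits }V_n\text{ at }u)=h_{V_n}(u)$ from Lemma~\ref{lem:PK_surjectivity}; combined with the definition \eqref{eq: PK def} of $a^h$ and the symmetry of effective resistance, this reduces the weighted sum to exactly the probability, for simple random walk from $o$, that $\LE(X[0,T_{V\setminus V_n}])=\eta$, marginalized over the SRW exit vertex on $\partial V_n$.

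The main obstacle is the bookkeeping in this path-weight identification, which requires matching the Doob-transform factor $\deg(o)a^h(u,o)$ with the harmonic measure and exit-probability factors coming from the SRW description; once this is done, the rest is routine finite Wilson theory. The independence of the output distribution from the enumeration of $V$ and the choice of extremal $h\in\c{H}$ is then immediate, since $T^{(n)}\stackrel{d}{=}T_n$ converges to the UST of $G$, which is canonically defined.
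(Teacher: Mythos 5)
The paper's proof is much shorter and avoids the wired exhaustion altogether: it uses the fact that Wilson's algorithm rooted at an actual vertex $z_n$ of $G$ samples the UST of the recurrent graph $G$, observes that the loop-erasure of the walk from $o$ to $z_n$ is unchanged in law if one conditions on $\{T_{z_n}<T_o^+\}$ (since the loop-erasure depends only on the portion of the walk after its last visit to $o$, which is exactly the conditioned walk), and then applies Lemma~\ref{L: local convergence} with $z_n\to h$. Your attempt to rederive this via finite Wilson on $G_n^*$ is genuinely different, but as written it has two gaps.

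First, the claim that $\LE(\wh{X}^h[0,\infty))$ ``agrees on $V_n$'' with $\LE(\wh{X}^h[0,L_n])$ is false. Using the last-visit description of loop-erasure: $\LE(\wh{X}^h[0,L_n])$ is a simple path from $o$ to $u=\wh{X}^h_{L_n}$, but it can make excursions \emph{outside} $V_n$ before returning to $u$. If the walk after time $L_n$ revisits one of those outside-$V_n$ vertices $w$ lying on $\LE(\wh{X}^h[0,L_n])$, then in $\LE(\wh{X}^h[0,\infty))$ the entire segment of that self-avoiding path between $w$ and $u$ is erased --- including any vertices of $V_n$ it contains. So the restrictions to $V_n$ need not coincide, and your $T^{(n)}$ is not the restriction of $T$.

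Second, the ``first-branch identity'' cannot be a distributional equality as stated, for the simple reason that the two objects don't live in the same space: $\LE(\wh{X}^h[0,L_n])$ is a self-avoiding path in $G$ that may leave $V_n$ and come back, while the first Wilson branch on $G_n^*$ is a self-avoiding path confined to $V_n\cup\{\partial_n\}$. Matching them would require a nontrivial projection/coupling that you have not supplied, and a mere weight computation over paths $\eta\subset V_n$ won't see the excursions of the Doob walk. There is also a more structural problem with the exhaustion route: the SRW on $G_n^*$ conditioned to hit $\partial_n$ before $T_o^+$ converges (along the exhaustion) to the Doob transform of a generally \emph{non-extremal} mixture in $\c{H}$, not to a specified extremal $h$, so a pure exhaustion argument does not by itself single out a given $h\in\mathrm{ext}(\c{H})$. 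The paper avoids all of this by never modifying the graph: it keeps the walk on $G$, roots Wilson at a vertex $z_n\to h$, and uses the conditioning trick plus the local-convergence lemma to pass to the limit directly.
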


\begin{remark}
It follows by taking convex combinations that the same statement also holds when $h$ is \emph{not} extremal.
\end{remark}

We will deduce Lemma~\ref{lem:Wilson} from the following lemma, which allows us to think of the Doob-transformed walk $\wh{X}^h$  as a limit of conditioned simple random walks on $G$. For the purposes of this lemma we think of our walks as belonging to the space of sequences in $V$ equipped with the product topology.


\begin{lemma}[Local convergence] \label{L: local convergence}
Let $G=(V,E)$ be an infinite, connected, locally finite, recurrent graph and suppose that $z_n$ is a sequence of vertices of $G$ such that $z_n$ converges to $h\in \c{H}$.
If $X$ denotes the random walk on $G$ started at $o$ and $\wh{X}^h$ denotes the Doob-transformation of $X$ as above, then the conditional law of $X$ given that it hits $z_n$ before first returning to $o$ converges weakly to the law of $\wh{X}^h$.
\end{lemma}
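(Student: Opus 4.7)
The plan is to reduce the claim to convergence of all cylinder-set probabilities, which is equivalent to weak convergence on $V^\NN$ (with the product topology) because cylinder sets are clopen. Thus it suffices to show, for every $m \geq 1$ and every finite sequence $\gamma=(\gamma_0,\ldots,\gamma_m)$ with $\gamma_0=o$, that
\[
\mathbf{P}_o\bigl(X[0,m]=\gamma \,\big|\, T_{z_n}<T_o^+\bigr) \xrightarrow{n\to\infty} \wh{\mathbf{P}}^h_o\bigl(\wh{X}^h[0,m]=\gamma\bigr).
\]

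First I would note that since $h \in \c{H}$ has $h_B$ supported on $\partial B$ for every finite $B$, the hypothesis $z_n \to h$ forces $z_n$ to leave every finite set eventually, so $z_n \notin \{\gamma_0,\ldots,\gamma_m\}$ for all large $n$. For such $n$, if $\gamma_i = o$ for some $0<i\leq m$, then $\{X[0,m]=\gamma\}\cap\{T_{z_n}<T_o^+\}$ is empty, and both sides of the display vanish---the right side because $a^h(o,o)=0$ makes the Doob transition probability into $o$ zero, so $\wh{X}^h$ never returns to $o$. Otherwise $\gamma$ avoids $o$ after time zero, and the strong Markov property at time $m$ yields
\[
\mathbf{P}_o\bigl(X[0,m]=\gamma,\,T_{z_n}<T_o^+\bigr) = \mathbf{P}_o(X[0,m]=\gamma)\,\mathbf{P}_{\gamma_m}(T_{z_n}<T_o),
\]
whence the conditional probability equals $\mathbf{P}_o(X[0,m]=\gamma)\cdot\mathbf{P}_{\gamma_m}(T_{z_n}<T_o)/\mathbf{P}_o(T_{z_n}<T_o^+)$.

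Next I would identify the ratio with the finite potential kernel $a^n(\cdot,o)$ from the proof of Lemma~\ref{lem:PK_harmonic}. The Green's-function identities used there give $\Gr_{z_n}(o,o) = 1/\mathbf{P}_o(T_{z_n}<T_o^+)$ and $\Gr_{z_n}(\gamma_m,o) = (1-\mathbf{P}_{\gamma_m}(T_{z_n}<T_o))\Gr_{z_n}(o,o)$, so
\[
\frac{\mathbf{P}_{\gamma_m}(T_{z_n}<T_o)}{\mathbf{P}_o(T_{z_n}<T_o^+)} = \deg(o)\cdot a^n(\gamma_m,o).
\]
The same proof also shows $a^n(x,y) = \mathbf{P}_{z_n}(T_x<T_y)\Reff(x\leftrightarrow y)$, which by the definition of $z_n \to h$ converges to $h_{x,y}(x)\Reff(x\leftrightarrow y)=a^h(x,y)$. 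Combining this with the identity $\wh{\mathbf{P}}^h_o(\wh{X}^h[0,m]=\gamma)=\deg(o)\,a^h(\gamma_m,o)\,\mathbf{P}_o(X[0,m]=\gamma)$ from \eqref{eq:Doob_transform_whole_path} closes the argument. The whole proof is essentially a bookkeeping exercise combining the strong Markov property with identities already established earlier in the section; there is no substantive obstacle.
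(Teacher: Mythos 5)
Your proof is correct and follows essentially the same route as the paper's: both apply the Markov property at time $m$ to factor out $\mathbf{P}_{\gamma_m}(T_{z_n}<T_o)$, identify the resulting ratio with $\deg(o)\,a^n(\gamma_m,o)$, and then use the convergence $a^n(\gamma_m,o)\to a^h(\gamma_m,o)$ (via the effective-resistance representation) together with \eqref{eq:Doob_transform_whole_path}. Your write-up is just more explicit about the reduction to cylinder sets and the degenerate case where $\gamma$ revisits $o$, which the paper leaves implicit.
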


\begin{proof}[Proof of Lemma~\ref{L: local convergence}]
	This is a classical result concerning Doob transforms, and can also be deduced from the limit formula \eqref{eq:PK_limit}. We give a brief proof. Let $T_{z_n}$ be the first time the walk hits $z_n$, let $T_o^+$ be the first positive time the walk hits $o$, and let $\varphi = (o, \varphi_1, \ldots, \varphi_m)$ be a path of length $m$ for some $m\geq 1$ with $\varphi_i\neq o$ for every $i>0$. By the Markov property for the simple random walk, 
	\[
		\mathbf{P}_o(X[0, m] = \varphi, T_{z_n} < T_o^+) = 	\mathbf{P}_o(X[0, m] = \varphi)\mathbf{P}_{\varphi_m}(T_{z_n} < T_o),
	\]
and it follows from \eqref{eq:Doob_transform_whole_path} that
	\[
			\mathbf{P}_o(X[0, m] = \varphi, T_{z_n} < T_o^+) = 	\frac{1}{\deg(o)a^h(\varphi_m, o)} \mathbf{P}_o(\wh{X}^h[0, m] = \varphi)\mathbf{P}_{\varphi_m}(T_{z_n} < T_o). 
	\]
	The result follows once multiplying both sides by the effective resistance between $o$ and $z_n$ and using the representation \eqref{eq: PK def} for the potential kernel. 
\end{proof}

	\begin{proof}[Proof of Lemma~\ref{lem:Wilson}]
The standard implementation of Wilson's algorithm rooted at $z_n$ allows us to sample the uniform spanning tree of $G$ in a manner exactly analogous to above, except that we start with a walk run from $o$ until it first hits $z_n$. Now, it is a combinatorial fact that the \emph{loop erasure} of the walk run from $o$ until it first hits $z_n$ does not change its distribution if we condition the walk to hit $z_n$ before returning to $o$: Indeed, the loop-erasure of the entire unconditioned walk is equal to the loop-erasure of the final segment of the walk between its last visit to $o$ and its first visit to $z_n$, and this final segment is distributed as the conditioned walk. Thus, in the standard implementation of Wilson's algorithm, we do not change the distribution of the obtained tree if we condition the first walk to hit $z_n$ before returning to $o$. The claim then follows by taking the limit as $z_n\to\infty$ and using Lemma~\ref{L: local convergence}.
	\end{proof}

	This leads to the following connection between the ends of the UST and the extremal points of the set of harmonic measures from infinity $\c{H}$.

\begin{proposition}
\label{prop:ends_and_H}
Let $G=(V,E)$ be an infinite, connected, locally finite, recurrent graph, let $T$ be the uniform spanning tree of $T$, and let $H$ be a countable subset of $\mathrm{ext}(\c{H})$. Almost surely, for each $h\in H$ there exists an infinite simple path $\Gamma=(\Gamma_1,\Gamma_2,\ldots)$ in $T$ such that 
\[
\frac{a^{h'}(\Gamma_i,x)}{a^{h}(\Gamma_i,x)}\to 0 \qquad \text{ as $i\to\infty$ for each $h' \in H \setminus \{h\}$.}
\]
In particular, $T$ almost surely has at least as many ends as there are extremal points of $\c{H}$.
\end{proposition}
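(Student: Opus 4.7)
The idea is to produce, for each $h\in H$, a ray in $T$ by invoking Wilson's algorithm rooted at $h$ and using the separation provided by Proposition~\ref{prop:identifying_h_from_trace}. Fix $h\in H$ and sample $T$ via Wilson's algorithm rooted at $h$ (Lemma~\ref{lem:Wilson}), taking $x$ as the first vertex of the enumeration. Then $T$ contains the infinite simple path $\Gamma := \LE(\wh{X}^h[0,\infty))$ starting at $x$, and since $\wh{X}^h$ is transient we have $\Gamma_i = \wh{X}^h_{\sigma_i}$ for some random times $\sigma_i\to\infty$ almost surely. By Proposition~\ref{prop:identifying_h_from_trace} applied to each pair $(h,h')$ with $h'\in H\setminus\{h\}$, together with the countability of $H$, we have almost surely
\[
\frac{a^{h'}(\wh{X}^h_n,x)}{a^h(\wh{X}^h_n,x)} \to 0 \quad \text{as } n\to\infty \text{ for every } h'\in H\setminus\{h\},
\]
and restricting to the subsequence $(\sigma_i)$ yields the same limit along $\Gamma$. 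Hence, letting $A_h$ denote the event that $T$ contains such a path starting at $x$, Lemma~\ref{lem:Wilson} implies $\PP(A_h)=1$ for each fixed $h\in H$, and the countability of $H$ gives $\PP(\bigcap_{h\in H} A_h)=1$, proving the main statement.

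For the corollary on ends, work on the full-probability event $\bigcap_{h\in H} A_h$ and choose a witnessing ray $\Gamma^{(h)}$ for each $h\in H$. If $h\neq h'$ in $H$, then $\Gamma^{(h)}\neq \Gamma^{(h')}$ as paths, since otherwise along their common trajectory one would have both $a^{h'}/a^h\to 0$ and $a^h/a^{h'}\to 0$, which is impossible. Since two distinct infinite simple paths in a tree starting at the same vertex $x$ must branch at some point and lie in disjoint subtrees thereafter (the tree property precludes any further encounter), the rays $\Gamma^{(h)}$ for $h\in H$ determine pairwise distinct ends of $T$. Consequently $T$ has at least $|H|$ ends almost surely, and letting $H$ range over countable subsets of $\mathrm{ext}(\c{H})$ yields the claimed lower bound.

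The only technical subtlety is the transfer of the ratio limit from $\wh{X}^h$ to its loop erasure $\Gamma$, which passes through transparently because the vertices of $\Gamma$ correspond to walk-times $\sigma_i$ tending to infinity. Beyond this, the argument is a direct assembly of Lemma~\ref{lem:Wilson}, Proposition~\ref{prop:identifying_h_from_trace}, and the elementary tree-combinatorial fact above, so I do not anticipate any essential obstacle.
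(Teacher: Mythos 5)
Your proof is correct and is exactly the argument the paper has in mind: the paper's proof of this proposition is a one-liner declaring it an immediate consequence of Proposition~\ref{prop:identifying_h_from_trace} and Lemma~\ref{lem:Wilson}, and your write-up fills in precisely those details (Wilson's algorithm rooted at $h$, transience of $\wh X^h$ giving $\sigma_i\to\infty$, countable intersection over $H$, and the elementary tree fact that distinct rays from a common vertex yield distinct ends).
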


(In the last sentence of this proposition we are not distinguishing between different infinite cardinalities, but merely claiming that if $\c{H}$ has infinitely many extremal points then $T$ has infinitely many ends almost surely.)

\begin{proof}
This is an immediate consequence of Proposition~\ref{prop:identifying_h_from_trace} and Lemma~\ref{lem:Wilson}.
\end{proof}

\noindent \textbf{Orientations.} Let $G=(V,E)$ be an infinite, connected, locally finite, recurrent graph and let $h\in \mathrm{ext}(\c{H})$. When we generate the UST $T$ of $G$ using Wilson's algorithm rooted at $h$, the algorithm also provides a natural \emph{orientation} of $T$, where each edge is oriented in the direction that it is crossed by the loop-erased random walk that contributed that edge to the tree. When $T$ almost surely has the same number of ends as there are extremal points in $\c{H}$, and both numbers are finite (which will always be the case in the unimodular setting by the results of \cite{BvE21}), it follows from Proposition~\ref{prop:ends_and_H} that this orientation is a.s.\ determined by the (unoriented tree): Almost surely, for each $h\in \c{H}$ and $v\in V$ there is a unique infinite ray $(\Gamma_1,\Gamma_2,\ldots)$ starting from $v$ such that
\[
\frac{a^{h'}(\Gamma_i,v)}{a^{h}(\Gamma_i,v)}\to 0 \qquad \text{ as $i\to\infty$ for each $h' \in \mathrm{ext}(\c{H}) \setminus \{h\}$,}
\]
and if we orient the tree in the direction of this ray we must recover the same orientation as if we had generated the oriented tree using Wilson's algorithm rooted at $h$. This fact will play a key role in the proof of our main theorem.

\section{Proof of the main theorem}
\label{sec:main_proof}

\subsection{Reversible and unimodular graphs}
\label{subsec:definitions}
We now give a very brief introduction to unimodular random rooted graphs, referring the reader to \cite{AldousLyonsUnimod2007, CurNotes} for detailed introductions. Let us just recall that $\c{G}_{\bullet, \bullet}$ is the separable metric space of doubly rooted graphs $(G, x, y)$ (modulo graph isomorphisms), equipped with the \textit{local topology}, also known as \textit{Benjamini-Schramm topology}. Similarly defined is the space $\c{G}_{\bullet}$ of rooted graphs $(G, o)$. A \textit{mass transport} is a measurable function $f: \c{G}_{\bullet, \bullet} \to [0, \infty]$. A measure $\PP$ on $\c{G}_{\bullet}$ is called \textit{unimodular} whenever the \emph{mass transport principle}
\[
\wh{\E} \left[\sum_{x \in V} f(G, o, x)\right] = \wh{\E} \left[\sum_{x \in V} f(G, x, o)\right]
\]
holds for all mass transports $f$. A probability measure $\PP$ on $\c{G}_{\bullet}$ is called \textit{reversible} if $(G, o, X_1) \overset{d}{=} (G, X_1, o)$ where $X_1$ is the first step of the simple random walk. The law $\PP$ is called \textit{stationary} if $(G, o) \overset{d}{=} (G, X_1)$ and clearly any reversible graph is stationary. For recurrent graphs, stationarity and reversibility are equivalent \cite{BenjaminiCurien2012}. 

If $\PP$ is the law of a unimodular random graph, with finite expected degree, then biasing it by $\deg(o)$ gives a reversible random graph and whenever $\PP$ is the law of a reversible random graph, then biasing by $\deg(o)^{-1}$ gives a unimodular random graph; see for example \cite{BenjaminiCurien2012}. 

A set $A\subseteq \c{G}_\bullet$ is said to be \textbf{rerooting invariant} if $((g,v)\in A)\Rightarrow ((g,u)\in A)$ for every rooted graph $(g,v) \in \c{G}_\bullet$ and every $u$ in the vertex set of $g$.
A unimodular random rooted graph $(G,o)$ is said to be \textbf{ergodic} if it has probability $0$ or $1$ to belong to any given re-rooting invariant event in $\c{G}_\bullet$. As explain in \cite[Section 4]{AldousLyonsUnimod2007}, this is equivalent to the law of $(G,o)$ being extremal in the weakly compact convex set of unimodular probability measures on $\c{G}_\bullet$. As such, it follows by Choquet theory that every unimodular measure on $\c{G}_\bullet$ may be written as a mixture of ergodic unimodular measures. For our purposes, the upshot of this is that we may assume without loss of generality that $(G,o)$ is ergodic when proving Theorem~\ref{T:main}.

We will also rely on the following characterization of two-ended unimodular random rooted graphs due to Bowen, Kun, and Sabok \cite{bowen2021perfect}, which builds on work of Benjamini and the second author \cite{BenHutch}. Here, a graph $G$ is said to have \textbf{linear volume growth} if for each vertex $v$ of $G$ there exists a constant $C_v$ such that $|B(v,r)|\leq C_v r$ for every $r\geq 1$, where $B(v,r)$ denotes the graph distance ball of radius $r$ around $v$.

\begin{proposition}[\cite{bowen2021perfect}, Proposition 2.1]
\label{prop:two_ended=linear_growth}
Let $(G,o)$ be an infinite unimodular random rooted graph. Then $G$ is two-ended almost surely if and only if it has linear volume growth almost surely.
\end{proposition}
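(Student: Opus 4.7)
The plan is to prove the two implications separately, using the mass transport principle (MTP) as the principal tool. By the ergodic decomposition of unimodular measures I may assume $(G,o)$ is ergodic.

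For the direction that two-ended implies linear growth: when $G$ is two-ended almost surely, finite cuts separating the two ends exist almost surely, and by iteratively choosing a minimal such cut in an equivariant way (using some canonical scoring rule on finite cut-sets) one obtains a bi-infinite sequence of disjoint finite separators $\{W_n\}_{n\in\Z}$ whose complementary pieces give a $\Z$-indexed partition $V=\bigsqcup_{n\in\Z}V_n$ into ``slabs.'' The key step is to prove $\E|V_0|<\infty$ where $V_0$ is the slab containing $o$; this follows from a careful MTP argument exploiting that two-endedness forces a quantitative bound on the typical width of the graph along the spine. Once $\E|V_0|<\infty$ is established, stationarity along the spine shift gives $\E|V_n|=\E|V_0|$ for all $n$, and since $B(o,r)\subseteq\bigcup_{|n-n(o)|\leq r}V_n$, one obtains $\E|B(o,r)|=O(r)$, which upgrades to $|B(o,r)|=O(r)$ almost surely via a Borel--Cantelli argument.

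For the converse, linear growth implies two-ended: I would first rule out three or more ends. If $G$ had at least three ends almost surely, then equivariantly chosen finite cuts would produce a tree-like branching structure at every scale, and a standard MTP argument (transporting unit mass from each vertex to the branching points) shows that such branching forces at least quadratic volume growth, contradicting the linear-growth hypothesis. Ruling out the one-ended case is more subtle. If $G$ were one-ended, the telescoping estimate $\sum_{r=0}^{R}|\partial B(o,r)|\leq|B(o,R+1)|=O(R)$ would force $|\partial B(o,r)|$ to be bounded on average, so $G$ would resemble a ``thin half-infinite tube'' with a distinguished origin near $o$. Such asymmetric structure is prohibited by unimodularity: one-endedness induces a canonical equivariant ``direction toward the unique end,'' which one can use to construct a non-trivial integrable flow whose MTP-imposed zero-divergence condition forces a contradiction with the existence of only one end.

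The main obstacle I anticipate is the one-ended exclusion, since one-endedness and linear growth are individually compatible with unimodularity (as witnessed by $\Z^2$ and $\Z$ respectively). The two conditions are prohibited only in combination, and the cleanest route to the contradiction, following Bowen--Kun--Sabok, is to produce a measurable perfect matching of $V$ that intrinsically encodes two-endedness; the non-existence of such a matching in the one-ended linear-growth case is what ultimately forces exactly two ends.
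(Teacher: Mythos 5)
The paper does not prove this proposition at all: it imports it verbatim as \cite[Proposition~2.1]{bowen2021perfect}, building on \cite{BenHutch}. So there is no internal proof to compare your sketch against, and your proposal has to stand or fall on its own.

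On its own terms the sketch has real gaps in both directions, the more serious ones in the direction you yourself flag as hard. For ``two-ended $\Rightarrow$ linear growth'': the existence of an equivariant bi-infinite sequence of finite separators is plausible but needs an actual construction (one cannot just say ``canonical scoring rule''; one has to check the chosen family of cuts is nested, disjoint, and measurably selectable), the crucial bound $\E|V_0|<\infty$ is asserted with no argument, and the final ``Borel--Cantelli'' upgrade from $\E|B(o,r)|=O(r)$ to almost-sure linear growth does not work as stated --- Markov plus Borel--Cantelli along all radii does not close. What does close is the ergodic theorem applied to the stationary sequence $(|V_n|)_{n\in\ZZ}$ under the spine shift, exactly the mechanism the paper itself uses at the end of its proof of Theorem~\ref{T:main}; you have the ingredients but invoke the wrong tool. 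For the converse, the claim that three or more ends would force \emph{quadratic} volume growth is not correct and is not the right argument: the relevant fact is that a unimodular random rooted graph almost surely has $0$, $1$, $2$, or infinitely many ends (Aldous--Lyons), so one only needs to exclude $1$ and $\infty$; excluding infinitely many ends from linear growth still needs an argument. And the one-ended exclusion --- which you correctly identify as the crux, since one-endedness and linear growth are each separately compatible with unimodularity --- is left as a vague appeal to a ``non-trivial integrable flow'' and then to the Bowen--Kun--Sabok matching, i.e.\ to the very theorem being proved. As written this is a roadmap, not a proof, and the hard step is the one that is least developed.
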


To prove Thorem~\ref{T:main}, it will therefore suffice to prove that if $(G,o)$ is a recurrent unimodular random rooted graph whose UST is two-ended almost surely then $G$ has linear volume growth almost surely.

\subsection{The effective resistance is linear on the spine} \label{s: linear growth}

Let $\PP$ be the joint law of an ergodic recurrent unimodular random rooted graph $(G,o)$ and its uniform spanning tree $T$, which we think of as a triple $(G,o,T)$. It follows by tail triviality of the UST \cite[Theorem 8.3]{BLPS} that the number of ends of $T$ is deterministic conditional on $(G,o)$, and since $(G,o)$ is ergodic that $T$ has some constant number of ends almost surely.
Moreover, it follows from \cite[Theorem 6.2 and Proposition 7.1]{AldousLyonsUnimod2007} that this number of ends is either $1$ or $2$ almost surely, so that $T$ is either one-ended almost surely or two-ended almost surely.

We wish to prove that if $T$ is two-ended almost surely then $G$ is two-ended almost surely also. We will rely on the following theorem of Berestycki and the first author. 

\begin{theorem}[\cite{BvE21}, Theorem 1] \label{T:harm and UST}
	Let $(G, o)$ be a recurrent unimodular random rooted graph with $\E\deg(o)<\infty$. Almost surely, the uniform spanning tree of $G$ is one-ended if and only if the harmonic measure from infinity is uniquely defined.
\end{theorem}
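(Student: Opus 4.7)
The ``only if'' direction (one-ended UST implies singleton $\c{H}$) is immediate from Proposition~\ref{prop:ends_and_H}: the number of ends of $T$ bounds from above the number of extremal points of $\c{H}$, so one-endedness forces $|\mathrm{ext}\,\c{H}| \leq 1$, and since $\c{H}$ is a Choquet simplex, this forces $\c{H}$ to be a singleton.

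For the harder ``if'' direction, assume $\c{H} = \{h\}$ almost surely; I want to show $T$ is one-ended almost surely. By the Aldous-Lyons upper bound cited in Section~\ref{subsec:definitions}, $T$ has at most two ends, so it suffices to rule out that $T$ is two-ended. Argue by contradiction: suppose $T$ is two-ended almost surely, so that $T$ has a unique bi-infinite spine $\Gamma = (\ldots, x_{-1}, x_0, x_1, \ldots)$. Sampling $T$ via Wilson's algorithm rooted at $h$ (Lemma~\ref{lem:Wilson}), the first walk produces an infinite loop-erased ray $\gamma := \LE(\wh{X}^h[0,\infty))$ which must coincide with one of the two rays of $\Gamma$; let $\gamma' = (u_1, u_2, \ldots)$ denote the other ray, which after passing to a subsequence if necessary consists of vertices tending to infinity in $G$.

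The plan is now to compare the behavior of walks conditioned to hit the $u_n$'s with $\wh{X}^h$ itself and derive a contradiction. Since $\c{H}=\{h\}$ is a singleton, $u_n \to h$ in $V\cup\c{H}$, and Lemma~\ref{L: local convergence} yields that the walk conditioned to hit $u_n$ before returning to $o$ converges in law to $\wh{X}^h$. Coupling through Wilson's algorithm rooted at $u_n$, the joint distribution of the resulting UST $T_n$ and first walk $Y_n$ converges in distribution to the joint $(T, \wh{X}^h)$ produced by Wilson's rooted at $h$; in particular the loop-erasure of $Y_n$, which by the combinatorial identity recalled in the proof of Lemma~\ref{lem:Wilson} equals the tree-path from $o$ to $u_n$ in $T_n$, converges in distribution to $\gamma$. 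On the other hand, using that $u_n$ is chosen along the ``$\gamma'$-side'' of the spine of $T$ and a mass transport / ergodicity argument, one should be able to force the pre-limit tree-path $o\to u_n$ (in $T_n$) to begin, with positive probability, with edges disjoint from the first edges of $\gamma$, contradicting the distributional convergence to $\gamma$.

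The main obstacle is to make this last step precise: the vertex $u_n$ is chosen as a function of the random spine $\Gamma$ of $T$, whereas $T_n$ is a fresh UST sampled via Wilson's rooted at $u_n$, and disentangling these two couplings while ensuring that the pre-limit tree-path distribution genuinely reflects the opposite-ray choice requires an ergodic-theoretic input. I expect the proof to combine the mass transport principle, the Liouville property for extremal Doob transforms (Theorem~\ref{thm:Liouville}), and transience of $\wh{X}^h$ (which stabilizes its loop-erasure) to upgrade the product-topology convergence of walks to convergence of their initial loop-erased prefixes; this coupling step is where the bulk of the work in BvE21 is concentrated.
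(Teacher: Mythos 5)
Your ``only if'' direction is fine: Proposition~\ref{prop:ends_and_H} does show that a one-ended UST forces $|\mathrm{ext}(\c{H})|\le 1$, and since $\c{H}$ is nonempty and a Choquet simplex this gives $|\c{H}|=1$. (The paper instead imports this direction from \cite[Theorem 14.2]{BLPS}, which holds for deterministic recurrent graphs; both routes are valid, and yours is internal to the paper's Section~\ref{sec:boundary_theory}.)

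The ``if'' direction has a genuine gap, and it is precisely the one you flag yourself. The problem is structural: you choose $u_n$ along the opposite ray $\gamma'$ of the spine of the \emph{original} sample $T$, but then run Wilson's algorithm rooted at $u_n$ to produce a \emph{fresh} sample $T_n$. Although $T_n$ is distributed as the UST, there is no coupling under which the tree-path from $o$ to $u_n$ in $T_n$ can be forced to start along $\gamma'$ just because $u_n$ lies on $\gamma'$ in $T$ --- the conditional law of the UST given that $u_n$ is on the spine of \emph{some} sample is not the law you are sampling from. The convergence $T_n\Rightarrow T$, $Y_n\Rightarrow \wh{X}^h$ from Lemma~\ref{L: local convergence} is therefore entirely consistent with the pre-limit tree-paths $o\to u_n$ in $T_n$ always starting in the direction of $\gamma$; no contradiction is produced. (There is also a secondary issue that $u_n$ is random, so invoking Lemma~\ref{L: local convergence} requires conditioning on or averaging over $T$ before passing to the limit.)

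The paper's argument (for the stronger Theorem~\ref{T:harm and UST_no_degree_assumption}, proved in Appendix~\ref{Appendix:BvE}) sidesteps this by never coupling two UST samples. It uses the uniqueness of $h$ to orient the UST canonically, defines the ``past'' and ``future'' of a vertex in the oriented tree, and exploits hyperfiniteness/F{\o}lner averaging together with the mass-transport principle to show that the expected F{\o}lner-density of the past of $o$ is bounded below by $\tfrac12\mathbb{P}(o\in\c{S})^2$ while simultaneously tending to $0$; this forces $\mathbb{P}(o\in\c{S})=0$. The density-to-zero side is where the Doob-transform machinery enters: via Lemma~\ref{lem:hit_sets_with_h_lower_bound}, Corollary~\ref{cor:intersection_property2}, and Proposition~\ref{prop:tip_escape}, which rely on the Liouville property (Theorem~\ref{thm:Liouville}) and the Markov-chain intersection results of \cite{Lyons_2003}. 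Your sketch invokes the right ingredients (Wilson's rooted at $h$, the Liouville property, transience of $\wh{X}^h$), but the mechanism turning them into a contradiction is missing, and it is not a small bridge: the quantitative mass-transport step is the heart of the proof.
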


To avoid the unnecessary assumption that $\E \deg(o)<\infty$, we will use the following mild generalization of this theorem, whose proof is given in Appendix~\ref{Appendix:BvE}.

\begin{theorem}\label{T:harm and UST_no_degree_assumption}
	Let $(G, o)$ be a recurrent unimodular random rooted graph. Almost surely, the uniform spanning tree of $G$ is one-ended if and only if the harmonic measure from infinity is uniquely defined.
\end{theorem}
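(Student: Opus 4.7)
My plan is to derive Theorem~\ref{T:harm and UST_no_degree_assumption} directly from the boundary theory developed in Section~\ref{sec:boundary_theory}, bypassing the need for any moment assumption. In fact, both equivalent conditions---one-endedness of the UST and uniqueness of the harmonic measure from infinity---depend only on the underlying graph together with the intrinsic randomness of the UST, so the equivalence should hold deterministically in $G$: for \emph{every} infinite, connected, locally finite, recurrent graph, the UST is one-ended almost surely if and only if $\c{H}$ is a singleton. The unimodular hypothesis plays no essential role in the equivalence itself.

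For the direction ``UST one-ended a.s.\ implies $|\c{H}|=1$'', I would argue by contrapositive. Suppose $\c{H}$ contains at least two distinct points. Since $\c{H}$ is a non-singleton compact convex subset of a locally convex topological vector space, the Krein--Milman theorem produces at least two distinct extremal points $h_1, h_2 \in \mathrm{ext}(\c{H})$. Applying Proposition~\ref{prop:ends_and_H} to the countable set $H = \{h_1, h_2\}$ then yields, almost surely, infinite simple paths in the UST distinguished by the asymptotic behaviour of the ratios $a^{h'}(\Gamma_i, o)/a^{h}(\Gamma_i, o)$, which forces $T$ to have at least two ends almost surely, contradicting one-endedness.

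For the converse direction ``$|\c{H}|=1$ implies UST one-ended a.s.'', let $\c{H} = \{h\}$; trivially $h \in \mathrm{ext}(\c{H})$. By Lemma~\ref{lem:Wilson}, the UST of $G$ is distributed as the tree $T$ produced by Wilson's algorithm rooted at $h$. By construction, this algorithm begins with $E_0 = \LE(\wh{X}^h[0,\infty))$, which is an infinite simple path in $G$ since $\wh{X}^h$ is transient. Every subsequent step of the algorithm contributes only finitely many edges, because the corresponding simple random walk is stopped upon first hitting the current tree. The resulting tree therefore consists of the single infinite simple path $E_0$ together with finite branches attached to it, so $T$ is one-ended.

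I anticipate no substantial obstacle, since essentially all the work has been done in Section~\ref{sec:boundary_theory}, and the proof in the appendix amounts to assembling Proposition~\ref{prop:ends_and_H} and Lemma~\ref{lem:Wilson}. The one minor subtlety is the use of Krein--Milman in the forward direction to guarantee that a non-singleton $\c{H}$ has at least two extremal points, which is needed because Proposition~\ref{prop:ends_and_H} is stated only for extremal measures; this is immediate from the compactness and convexity of $\c{H}$ inside the relevant locally convex space.
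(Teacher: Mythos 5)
Your forward direction (non-singleton $\c{H}$ implies the UST has at least two ends) is correct and is essentially the paper's own appeal to Proposition~\ref{prop:ends_and_H}, though the paper actually dispatches this direction even more quickly by citing \cite[Theorem 14.2]{BLPS}, which already gives it for deterministic recurrent graphs.

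The converse direction is where the proposal goes wrong, and this is the entire content of the theorem. You argue that since $E_0 = \LE(\wh{X}^h[0,\infty))$ is a single infinite ray and every subsequent step of Wilson's algorithm contributes only a finite loop-erased path, the resulting tree is $E_0$ with finite branches attached, hence one-ended. The inference in the last step is a non sequitur: each individual loop-erased walk is finite, but the cumulative effect of infinitely many of them can perfectly well produce an infinite branch hanging off $E_0$. For instance, if the enumeration adds vertices $w_1, w_2, w_3, \ldots$ forming a path disjoint from $E_0$ and each walk from $w_{i+1}$ terminates on hitting $w_i$, the union is an infinite ray not contained in $E_0$, giving a second end. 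Nothing in the premises rules this out, so the argument proves nothing. Note also that the same reasoning, applied verbatim to a transient graph, would "show" that the WUSF component of the root is always one-ended, which is false (e.g.\ on a regular tree the components have infinitely many ends); this is a clear signal that the argument is structurally broken. Moreover, the claim that unimodularity "plays no essential role" is exactly what the paper warns against: the modified $\ZZ$ example in the introduction is a one-ended recurrent graph whose UST is two-ended, showing the number of ends of the UST is genuinely sensitive to non-unimodular perturbations, and the paper is explicit that only the direction "one-ended UST $\Rightarrow$ unique $\c{H}$" is known deterministically.

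The paper's actual proof of the converse is the substance of Appendix~\ref{Appendix:BvE}, and it uses unimodularity in an essential way. The key step, Lemma~\ref{lem:A_1/2}, uses hyperfiniteness and the mass-transport principle to show that $\limsup_{x\to\infty} h_{x,o}(x) \geq 1/2$ almost surely, which is the quantitative input ensuring that two independent Doob-transformed walks (and hence a Doob-transformed walk and the loop-erasure of another) intersect infinitely often (Corollary~\ref{cor:intersection_property2}). This intersection property, via Proposition~\ref{prop:tip_escape} and another mass-transport computation, forces the density of vertices on the spine to vanish, hence $\mathbb{P}(o\in\c{S})=0$, hence one-endedness. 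None of this is "assembling Proposition~\ref{prop:ends_and_H} and Lemma~\ref{lem:Wilson}"; it is a genuinely new argument, and it is precisely the part the proposal omits.
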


It follows from this theorem together with Proposition~\ref{prop:ends_and_H} that if $T$ is two-ended almost surely then
$|\mathrm{ext}(\c{H})|=2$ almost surely.

Suppose that $T$ is two-ended almost surely and let $\c{S}$ be the \textbf{spine} of $T$, i.e., the unique double-infinite simple path contained in $T$.
We give $T$ an orientation by choosing uniformly at random one of the two ends of $\c{S}$ and directing every edge towards that end, letting the resulting oriented tree be denoted $T^\rightarrow$ with oriented spine $\c{S}^\rightarrow$. 
Since the law of $T^\rightarrow$ is a rerooting-equivariant function of the graph $(G,o)$, the triple $(G,T^\rightarrow,o)$ is unimodular.
Since ``everything that can happen somewhere can happen at the root" \cite[Lemma 2.3]{AldousLyonsUnimod2007} we also have that the origin belongs to $\c{S}$ with positive probability
and hence that we can define a law $\PSpine$ on triplets $(G, T^\rightarrow, o)$ (which we can view as a rooted network) by conditioning $o$ to belong to~$\c{S}$.
%
%
The law $\PSpine$ has the very useful property that it is stationary under shifts along the spine, which we now define. Each vertex $v \in \c{S}$ has a unique oriented edge emanating from it in $\c{S}^\rightarrow$, and we will write $\sigma(v)$ for the vertex on the other end of this edge. The map $v\mapsto \sigma(v)$ can be thought of as a shift, following the orientation along the spine, and there is also a well-defined backwards shift $\sigma^{-1}$ mapping each $x\in \c{S}$ to the unique vertex $v\in\c{S}$ with $\sigma(v)=x$.

\begin{lemma}\label{lem:stationary}
	The law $\PSpine$ is invariant under the shift $\sigma$.  
\end{lemma}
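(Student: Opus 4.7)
The plan is to prove shift-invariance by a direct application of the mass transport principle to the unimodular marked graph $(G, T^\rightarrow, o)$. The key preliminary point, essentially asserted in the paragraph preceding the statement, is that this triple genuinely is unimodular: the orientation is produced from $(G,o,T)$ by attaching an independent fair coin flip (selecting one of the two ends of $\c{S}$) and then orienting in a deterministic, rerooting-equivariant way. I would first record this unimodularity—standard once one allows the mark to incorporate the coin flip together with the tree $T$—and then move to the transport argument.

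For the core step, fix an arbitrary bounded Borel measurable function $f$ on rooted oriented graphs and consider the mass transport
\[
F(G, T^\rightarrow, x, y) = f(G, T^\rightarrow, y) \cdot \id\bigl(x \in \c{S},\ \sigma(x) = y\bigr).
\]
Each $x \in \c{S}$ has a unique successor $\sigma(x)$, so $\sum_y F(G, T^\rightarrow, x, y) = f(G, T^\rightarrow, \sigma(x))\,\id(x \in \c{S})$. Since $\c{S}$ is a bi-infinite simple path, each $y \in \c{S}$ has a unique predecessor in $\c{S}$ (i.e., a unique $x$ with $\sigma(x)=y$), so $\sum_x F(G, T^\rightarrow, x, y) = f(G, T^\rightarrow, y)\,\id(y \in \c{S})$. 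The mass transport principle applied to the unimodular triple $(G, T^\rightarrow, o)$ then yields
\[
\E\bigl[f(G, T^\rightarrow, \sigma(o))\,\id(o \in \c{S})\bigr] = \E\bigl[f(G, T^\rightarrow, o)\,\id(o \in \c{S})\bigr].
\]

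To finish, I would divide both sides by $\PP(o \in \c{S})$, which is positive by the ``everything that can happen somewhere can happen at the root'' principle \cite[Lemma 2.3]{AldousLyonsUnimod2007} together with the almost sure nonemptiness of $\c{S}$ in the two-ended regime. By definition of $\PSpine$ as the conditional law of $(G, T^\rightarrow, o)$ given $\{o \in \c{S}\}$, this rearranges to $\ESpine[f(G, T^\rightarrow, \sigma(o))] = \ESpine[f(G, T^\rightarrow, o)]$ for every bounded measurable $f$, which is exactly the claimed $\sigma$-invariance. No serious obstacle is anticipated: the only mild subtlety is formally justifying the unimodularity of the marked triple, and the rest is a textbook mass-transport computation in which the transport simply moves the test function back one step along the spine.
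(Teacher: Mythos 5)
Your proof is correct and takes essentially the same route as the paper: both establish that the oriented triple $(G,T^\rightarrow,o)$ is unimodular and then apply the mass-transport principle with a transport that moves mass one step along the spine, dividing out by $\PP(o\in\c{S})>0$ at the end. The only cosmetic difference is that you transport a general bounded test function $f$ while the paper uses the indicator of a Borel event $A$; these are interchangeable.
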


\begin{proof}
	 Let $A$ be any Borel set of triples $(g,t^\rightarrow,v)$ where $(g,v)$ is a rooted graph and $t^\rightarrow$ is an oriented spanning tree of $g$, and define the mass transport
	\[
		f(g,t^\rightarrow, v, w) := \id\left(\text{$t^\rightarrow$ is two-ended, $w$ is in the spine of $t^\rightarrow$, $v=\sigma(w)$, and $(g,t^\rightarrow,w)\in A$}\right).
	\]
	Note that there only exists one vertex $v$ such that $v = \sigma(w)$ and, vice-versa, for each $v$ in the spine of $t^\rightarrow$ there is only one $v$ in the spine of $t^\rightarrow$ such that $\sigma(v) = o$ and $v \in \mathcal{S}$.
	 Therefore, 
	\[
		\sum_{v \in V} f(G,T^\rightarrow, v, o) = \id\left(\text{$T^\rightarrow$ is two-ended, $o$ is in the spine of $T^\rightarrow$, and $(G,T^\rightarrow,o)\in A$}\right)
	\]
	and 
	\[
		\sum_{v \in V} f(G,T^\rightarrow, v, o) = \id\left(\text{$T^\rightarrow$ is two-ended, $o$ is in the spine of $T^\rightarrow$, and $(G,T^\rightarrow,\sigma(o))\in A$}\right)
	\]
	Using the mass-transport principle we thus have that 
	\begin{multline*}
	\PP\left(\text{$T^\rightarrow$ is two-ended, $o$ is in the spine of $T^\rightarrow$, and $(G,T^\rightarrow,o)\in A$}\right) \\= 
	\PP\left(\text{$T^\rightarrow$ is two-ended, $o$ is in the spine of $T^\rightarrow$, and $(G,T^\rightarrow,\sigma(o))\in A$}\right)
	\end{multline*}
	which shows the result because $\PP(o \in \c{S}) > 0$ and $T$ is two-ended a.s. by assumption. 
\end{proof}






The main goal of this section is to show that along the spine of the UST, the effective resistances on the original graph must grow linearly under the assumption that the UST has two ends (and thus a well-defined spine). 
Heuristically, this tells us that if a graph is unimodular and the uniform spanning tree is two-ended, then the actual graph should in some sense be ``close'' to the line $\ZZ$. 


\begin{proposition} \label{P:two-ends->linear} 
	The limit $\lim_{n\to\infty}\frac{1}{n} \Reff(o \leftrightarrow \sigma^n(o))=\lim_{n\to\infty}\frac{1}{n} \Reff(o \leftrightarrow \sigma^{-n}(o))$ exists and is positive
	$\PSpine$-a.s.
\end{proposition}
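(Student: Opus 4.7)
The plan is to express $R_n := \Reff(o, \sigma^n(o))$ as a Birkhoff average of a bounded, strictly positive stationary cocycle along the spine, and then deduce both convergence and positivity from the ergodic theorem. Our hypothesis that $T$ is two-ended almost surely, combined with Theorem~\ref{T:harm and UST_no_degree_assumption} and Proposition~\ref{prop:ends_and_H}, forces $|\mathrm{ext}(\c{H})| = 2$; call the two extremal elements $h_+$ and $h_-$, and by the orientation discussion at the end of Section~\ref{sec:boundary_theory} assume without loss of generality that the uniformly chosen orientation of $T^\rightarrow$ points toward $h_+$, so that $\sigma^n(o) \to h_+$ and $\sigma^{-n}(o) \to h_-$ in $V \cup \c{H}$. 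The first step is the identity $a^{h_+}(\sigma^n o, o) = R_n$: by Lemma~\ref{lem:Wilson} together with the orientation discussion, the loop-erasure of $\wh{X}^{h_+}$ started from $o$ is the forward spine ray $(o, \sigma(o), \sigma^2(o), \dots)$, so every $\sigma^n(o)$ is visited by $\wh{X}^{h_+}$ with probability one, whence~\eqref{eq:hitting prop CRW} gives $(h_+)_{o, \sigma^n o}(\sigma^n o) = 1$ and substitution into~\eqref{eq: PK def} yields the identity.

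Now introduce the harmonic function $\psi(x) := a^{h_+}(x, o) - a^{h_-}(x, o)$, which satisfies $\Delta\psi = 0$ everywhere on $V$; we have $\psi(o) = 0$, and by Proposition~\ref{prop:identifying_h_from_trace} applied along the forward spine ray, $a^{h_-}(\sigma^n o, o)/R_n \to 0$, hence $\psi(\sigma^n o) = R_n(1 + o(1))$. The main technical step is to verify that $F(G, T, o) := \psi(\sigma(o))$ is a shift-cocycle on $\PSpine$. Writing $\psi_w(\cdot) := a^{h_+}(\cdot, w) - a^{h_-}(\cdot, w)$, one must show that $\psi_w - \psi$ is constantly equal to $-\psi(w)$, which implies $\psi_w(\sigma w) - \psi_w(w) = \psi(\sigma w) - \psi(w)$ and hence $F \circ \sigma^k = \psi(\sigma^{k+1}o) - \psi(\sigma^k o)$ along the spine. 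The boundary identity $\psi_w(o) + \psi(w) = 0$ follows immediately from $a^h(x,y) + a^h(y,x) = \Reff(x,y)$ (valid for every $h \in \c{H}$); combined with the harmonicity of $\psi_w - \psi$ and an appropriate uniqueness statement for harmonic functions on the two-ended recurrent graph $G$, this yields the required constancy. Granted the cocycle property, $|F| \le \Reff(o, \sigma o) \le 1$ is integrable, so Birkhoff's ergodic theorem (applicable by Lemma~\ref{lem:stationary}) gives
\[
\frac{R_n}{n} \;\longrightarrow\; \ESpine\!\left[\bigl(1 - (h_-)_{o, \sigma o}(\sigma o)\bigr)\,\Reff(o, \sigma o) \,\big|\, \c{I}_\sigma\right] \quad\text{a.s.},
\]
where $\c{I}_\sigma$ denotes the $\sigma$-invariant $\sigma$-algebra.

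Positivity of this limit is then clean: $\Reff(o, \sigma o) > 0$ because $\sigma(o) \ne o$, and $(h_-)_{o, \sigma o}(\sigma o) < 1$ a.s.\ because otherwise~\eqref{eq:hitting prop CRW} would force $\wh{X}^{h_-}$ from $o$ to visit $\sigma(o)$, placing $\sigma(o)$ in its loop-erasure; this loop-erasure, however, equals the backward spine ray $(o, \sigma^{-1}(o), \sigma^{-2}(o), \dots)$ by Lemma~\ref{lem:Wilson} applied at $h_-$, and does not contain $\sigma(o)$. The $\sigma^{-n}$ limit is obtained symmetrically by swapping $h_+ \leftrightarrow h_-$, and its equality with the forward limit follows from the invariance of $\PSpine$ under reversing the uniform orientation of $T^\rightarrow$, which swaps $\sigma$ with $\sigma^{-1}$ and $h_+$ with $h_-$. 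The principal obstacle is the constant-difference identity $\psi_w - \psi \equiv -\psi(w)$: although $\psi_w - \psi + \psi(w)$ is harmonic and vanishes at both $o$ and $w$, one still needs to rule out nonzero bounded harmonic perturbations on $G$, which I expect to handle via a Liouville-type argument built on Theorem~\ref{thm:Liouville} or a direct asymptotic analysis of $a^{h_\pm}(\cdot, o)$ and $a^{h_\pm}(\cdot, w)$ along the two ends of the spine.
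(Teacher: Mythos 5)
Your proposal shares the central object with the paper's proof (the antisymmetrized potential kernel $\psi = a^{h_+}(\cdot,o) - a^{h_-}(\cdot,o)$, which is the paper's $M_o$, together with the cocycle property and stationarity of the shift along the spine), but it goes astray at the two points where the paper has to work hardest.

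First, the claimed identity $a^{h_+}(\sigma^n o, o) = \Reff(o\leftrightarrow \sigma^n o)$, equivalently $(h_+)_{o,\sigma^n o}(\sigma^n o) = 1$, is false in general. Your argument is that $\sigma^n(o)$ lies on $\LE(\wh{X}^{h_+})$, so "$\wh{X}^{h_+}$ visits $\sigma^n(o)$ with probability one." But the vertex $\sigma^n(o)$ is itself a function of the particular sample of $\wh{X}^{h_+}$ used to generate the tree, whereas $(h_+)_{o,v}(v) = \wh{\mathbf{P}}^{h_+}_o(\wh{X}\text{ hits }v)$ is the probability that an \emph{independent} copy of the Doob-transformed walk hits the fixed vertex $v$; the fact that one sample of the walk passes through a vertex it produced itself says nothing about whether an independent sample must. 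Concretely, on the ladder $\ZZ\times\{0,1\}$ rooted at $(0,0)$, the event $\sigma(o)=(1,0)$ has positive probability, but $(h_+)_{(0,0),(1,0)}((1,0))<1$ because a walk conditioned to escape to $+\infty$ can travel along the top rail and never visit $(1,0)$. For the same reason your positivity argument collapses: even granting $(h_-)_{o,\sigma o}(\sigma o)<1$ (and the argument you give for that is also flawed, since a.s.\ hitting a vertex does not place it in the loop-erasure — the walk never returns to $o$ but can revisit earlier non-$o$ vertices), the increment of the cocycle is $\bigl((h_+)_{\sigma o, o}(\sigma o)-(h_-)_{\sigma o, o}(\sigma o)\bigr)\Reff(o\leftrightarrow\sigma o)$, and nothing you have shown prevents the first factor from being negative on a positive-probability event. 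The paper is careful to claim only the asymptotic $M_o(n)\sim a^r(\sigma^n o,o)$ (Lemma~\ref{lem:eventual_positivity}), giving \emph{eventual} positivity of the partial sums but not of the individual increments; it then needs a genuinely nontrivial ergodic argument (Proposition~\ref{prop:ergodic}) to upgrade eventual positivity of partial sums of a stationary sequence to positivity of $\limsup \frac{1}{n}\sum Z_i$. That proposition — the real engine of the proof — has no analogue in your proposal, and a straight Birkhoff average does not suffice because you have not established $\mathbf{E}[Z_0]>0$.

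Second, the cocycle identity you flag as "the principal obstacle" is not actually an obstacle: the exact formula $a^{h}(x,o)-a^{h}(y,o)=a^{h}(x,y)-\Gr_y(x,o)/\deg(o)$ from \cite{BvE21}*{Proposition 3.5} gives $\psi_w-\psi\equiv-\psi(w)$ immediately (the Green's-function terms cancel between $h_+$ and $h_-$), which is exactly how the paper proves Lemma~\ref{lem:cocycle}. Your alternative idea — harmonicity plus a Liouville theorem — would also work once you know $\psi_w-\psi$ is bounded, since recurrent graphs have no nonconstant bounded harmonic functions, but boundedness is essentially the content of that same identity; citing it directly is cleaner. After the cocycle property, the remaining step, $M_o(n)\le\Reff(o\leftrightarrow\sigma^n o)$ together with existence of $\lim \Reff(o\leftrightarrow\sigma^n o)/n$ from the subadditive ergodic theorem, is also needed and is absent from your write-up because you believed $M_o(n)$ equals the resistance rather than being bounded above by it.
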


Note that the existence part of this proposition is an immediate consequence of the subadditive ergodic theorem; the content of the proposition is that the limit is positive.

\medskip

As discussed above, it follows from Proposition~\ref{prop:ends_and_H} and Theorem~\ref{T:harm and UST} that, $\PP_{\c{S}}$-almost surely, there are exactly two extremal elements of $\c{H}$, which we call ``$\ell$'' and ``$r$'', which satisfy
\begin{align}
\label{eq:identifying_ends}
\frac{a^r(\sigma^n(o), v)}{a^\ell(\sigma^n(o), v)} \to \begin{cases} \infty & \text{ as $n\to +\infty$}\\
0 & \text{ as $n\to-\infty$}\end{cases}
\end{align}
for every $v\in V$. (In particular, the random choice of orientation of $T$ we made when defining $\PP_{\c{S}}$ is equivalent to randomly choosing which of the two extremal elements of $\c{H}$ to call ``$r$''.)
%
%
Consider the function $V\to\RR$ defined by
\[
	M_{o}(x) := a^r(x, o)-a^\ell(x, o).
\]
We will show that $M_{o}(\sigma^n(o))$ grows linearly in $n$ and deduce from this that the effective resistance does too. 
The latter fact can be seen using \eqref{eq: PK def}, from which it follows that
\[
	M_{o}(x) = (r_{x, o}(x) - \ell_{x, o}(x)) \Reff(o \leftrightarrow x). 
\]
In the remainder we will slightly abuse notation to write $M_{m}(n) := M_{\sigma^m(o)}(\sigma^n(o))$ for $n, m \in \ZZ$. 
The first main ingredient is that $M_o(n)$ is an \emph{additive cocyle}. 
\begin{lemma}\label{lem:cocycle}
		$M_o(n + m) = M_o(n) + M_{n}(n + m)$ for every $n,m\in \ZZ$.
\end{lemma}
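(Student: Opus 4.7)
The plan is to establish the stronger pairwise identity $M_y(x) = M_o(x) - M_o(y)$ for every pair of vertices $x, y \in V$; the stated cocycle then follows immediately by setting $x = \sigma^{n+m}(o)$ and $y = \sigma^n(o)$.

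The key ingredient will be a clean resistance representation of the potential kernel. Inside the proof of Lemma~\ref{lem:PK_harmonic} it is established that, for any extremal $h \in \c{H}$ and any sequence $v_n \to h$, one has $a^h(x,y) = \Reff(x \leftrightarrow y)\lim_n \mathbf{P}_{v_n}(T_x < T_y)$. I would feed this into the classical three-point hitting identity
\[
\mathbf{P}_z(T_x < T_y)\,\Reff(x \leftrightarrow y) = \tfrac{1}{2}\bigl[\Reff(z \leftrightarrow y) + \Reff(x \leftrightarrow y) - \Reff(z \leftrightarrow x)\bigr],
\]
which is easily proved by computing voltages on finite exhaustions and using recurrence, to obtain the representation
\[
a^h(x,y) = \tfrac{1}{2}\Reff(x \leftrightarrow y) + \tfrac{1}{2}\lim_{v_n \to h}\bigl[\Reff(v_n \leftrightarrow y) - \Reff(v_n \leftrightarrow x)\bigr].
\]
Since $a^h(x,y)$ is known to be finite, the limit on the right-hand side exists automatically.

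The payoff is an immediate cancellation. Applying the formula to $a^h(x,o)$ and $a^h(x,y)$ and subtracting, the $\Reff(v_n \leftrightarrow x)$ terms cancel, leaving
\[
a^h(x,o) - a^h(x,y) = \tfrac{1}{2}\bigl[\Reff(x \leftrightarrow o) - \Reff(x \leftrightarrow y)\bigr] + \tfrac{1}{2}\lim_{v_n \to h}\bigl[\Reff(v_n \leftrightarrow o) - \Reff(v_n \leftrightarrow y)\bigr],
\]
in which the $x$-dependent contribution is entirely independent of $h$. Subtracting this formula for $h = r$ from the one for $h = \ell$ therefore removes all $x$-dependence and proves that $M_o(x) - M_y(x)$ is a constant (in $x$). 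Evaluating at $x = y$ identifies the constant as $M_o(y) - M_y(y) = M_o(y)$, which gives the pairwise identity $M_y(x) = M_o(x) - M_o(y)$ and hence the lemma.

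I do not anticipate a serious obstacle: the entire argument reduces to a short resistance calculation once the representation of $a^h$ above is in hand, and the only mild care needed is the standard justification of the three-point hitting identity via a finite exhaustion (for which recurrence is used to ensure $T_o, T_x, T_y$ are almost surely finite in the passage to the limit).
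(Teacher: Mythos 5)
Your proof is correct, and it takes a genuinely different route from the paper's. The paper invokes Proposition 3.5 of \cite{BvE21}, which asserts
\[
a^{\#}(x,o) - a^{\#}(y,o) = a^{\#}(x,y) - \frac{\Gr_y(x,o)}{\deg(o)},
\]
and observes that the Green's function term is independent of $\#\in\{\ell,r\}$ and so cancels upon subtracting the $r$ and $\ell$ versions. You instead reconstruct the same cancellation from scratch: the representation $a^h(x,y)=\Reff(x\leftrightarrow y)\lim_n \mathbf{P}_{v_n}(T_x<T_y)$ together with the classical three-point hitting formula gives
\[
a^h(x,o)-a^h(x,y)-a^h(y,o)=\tfrac{1}{2}\bigl[\Reff(x\leftrightarrow o)-\Reff(x\leftrightarrow y)-\Reff(y\leftrightarrow o)\bigr],
\]
a quantity that visibly does not depend on $h$. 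This is exactly the content of the cited BvE21 identity, since $\Gr_y(x,o)/\deg(o)=\mathbf{P}_x(T_o<T_y)\Reff(o\leftrightarrow y)$ equals $\tfrac{1}{2}[\Reff(x\leftrightarrow y)+\Reff(o\leftrightarrow y)-\Reff(x\leftrightarrow o)]$ by the same three-point formula; so you have effectively re-derived the needed special case of BvE21's Proposition 3.5 rather than citing it. The trade-off is that your route is more self-contained (at the cost of having to justify the three-point formula on an infinite recurrent graph, which, as you note, does go through by finite exhaustion and recurrence), while the paper's is a one-line application of an already-established identity. One small point worth flagging: the limit representation $a^h(x,y)=\lim_n\mathbf{P}_{v_n}(T_x<T_y)\Reff(x\leftrightarrow y)$ is only established for extremal $h$, since it relies on Lemma~\ref{L: extremal generators}; this is fine here because $\ell$ and $r$ are by construction the two extremal points of $\c{H}$, but the restriction should be stated explicitly.
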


\begin{proof}
	This is a direct consequence of Proposition 3.5 in \cite{BvE21}, stating that
	\[
	a^{\#}(x, o) - a^{\#}(y, o) = a^{\#}(x, y) - \frac{\Gr_{y}(x, o)}{\deg(o)}
	\]
	for each $\# \in \{\ell,r\}$ and all $x, y \in V$. Indeed, it follows from this identity that
	\begin{align*}
	M_{o}(n + m) - M_o(n) &= a^r(\sigma^{n + m}(o),o)-a^\ell(\sigma^{n + m}(o),o) - a^r(\sigma^{n}(o),o) + a^\ell(\sigma^{n}(o),o)\\
	&= \left[a^r(\sigma^{n + m}(o),\sigma^{n}(o))- \frac{\Gr_{\sigma^n(o)}(\sigma^{n+m}(o), o)}{\deg(o)}\right] \\&\hspace{4cm}- \left[a^\ell(\sigma^{n + m}(o),\sigma^{n}(o)) - \frac{\Gr_{\sigma^n(o)}(\sigma^{n+m}(o), o)}{\deg(o)}\right]\\
	&= a^r(\sigma^{n + m}(o),\sigma^{n}(o)) - a^\ell(\sigma^{n + m}(o),\sigma^{n}(o)) =M_{n}(n + m)
	\end{align*}
	for every $n,m\in \Z$ as claimed.
\end{proof}

Let us also make note of the following key property of this additive cocycle. 

\begin{lemma}
\label{lem:eventual_positivity}
$\PP_{\c{S}}$-almost surely, $M_o(n)$ is positive for all sufficiently large positive $n$ and negative for all sufficiently large negative $n$. Moreover, 
\[
M_o(n) \sim a^r(\sigma^n(o), o) = r_{\sigma^n(o), o}(\sigma^n(o)) \Reff(o \leftrightarrow \sigma^n(o))
\]
$\PP_{\c{S}}$-almost surely as $n\to\infty$.
\end{lemma}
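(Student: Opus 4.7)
The plan is to deduce this directly from the boundary identification \eqref{eq:identifying_ends}, which is itself a consequence of Proposition~\ref{prop:identifying_h_from_trace} applied to the two ends of the spine together with the discussion of orientations preceding this subsection. Specializing \eqref{eq:identifying_ends} to the test vertex $v=o$ yields, $\PP_{\c{S}}$-almost surely,
\[
\frac{a^{\ell}(\sigma^n(o),o)}{a^{r}(\sigma^n(o),o)}\xrightarrow[n\to+\infty]{}0,\qquad \frac{a^{r}(\sigma^n(o),o)}{a^{\ell}(\sigma^n(o),o)}\xrightarrow[n\to-\infty]{}0,
\]
where I read the first display as asserting that $a^r(\sigma^n(o),o)>0$ and is much larger than $a^\ell(\sigma^n(o),o)$ for all sufficiently large positive $n$ (and symmetrically in the other direction).

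From here the two claims are essentially immediate. For the sign assertion, since $a^r,a^\ell\ge 0$ and the ratio $a^\ell/a^r$ tends to $0$, for all sufficiently large positive $n$ one has $a^r(\sigma^n(o),o)>a^\ell(\sigma^n(o),o)\ge 0$, so $M_o(n)=a^r(\sigma^n(o),o)-a^\ell(\sigma^n(o),o)>0$; the case $n\to-\infty$ is handled by the symmetric statement, giving $M_o(n)<0$ eventually. For the asymptotic equivalence, factor
\[
M_o(n)=a^{r}(\sigma^n(o),o)\Bigl[1-\tfrac{a^{\ell}(\sigma^n(o),o)}{a^{r}(\sigma^n(o),o)}\Bigr],
\]
and note that the bracket tends to $1$, whence $M_o(n)\sim a^{r}(\sigma^n(o),o)$. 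The final equality $a^{r}(\sigma^n(o),o)=r_{\sigma^n(o),o}(\sigma^n(o))\Reff(o\leftrightarrow\sigma^n(o))$ is simply the definition \eqref{eq: PK def} of the potential kernel.

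There is no real obstacle here beyond being explicit about why $a^r(\sigma^n(o),o)$ is strictly positive for large positive $n$, which is precisely the content of the divergence $a^r/a^\ell\to\infty$ provided by Proposition~\ref{prop:identifying_h_from_trace}; the remainder of the argument is a one-line rearrangement. The lemma is thus essentially a direct corollary of the boundary theory developed in Section~\ref{sec:boundary_theory}, and its role in what follows is to convert the cocycle $M_o(n)$ into a quantity that is asymptotically an effective resistance, so that subadditive/ergodic control on $M_o(n)$ transfers to control on $\Reff(o\leftrightarrow\sigma^n(o))$.
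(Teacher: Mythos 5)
Your proof is correct and is exactly the argument the paper has in mind: the paper's proof is the one-liner ``this follows immediately from \eqref{eq:identifying_ends} and the definition of $M_o(n)$,'' and you have simply unpacked that, specializing \eqref{eq:identifying_ends} to $v=o$, extracting the sign and the asymptotic $M_o(n)\sim a^r(\sigma^n(o),o)$ from the ratio convergence, and noting that the final equality is just \eqref{eq: PK def}. Your remark about needing $a^r(\sigma^n(o),o)>0$ for large $n$ is the right technical point to flag; it is implicit in the divergence of the ratio in \eqref{eq:identifying_ends}, and your treatment of it is adequate.
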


\begin{proof}
This follows immediately from \eqref{eq:identifying_ends} and the definition of $M_o(n)$. 
\end{proof}

We will deduce Proposition~\ref{P:two-ends->linear} from Lemma~\ref{lem:eventual_positivity} together with the following general fact about stationary sequences.

\begin{proposition}
\label{prop:ergodic}
Let $(Z_i)_{i\in \ZZ}$ be a stationary sequence of real-valued random variables and suppose that $\sum_{i=0}^n Z_{-i} >0$ for all sufficiently large $n$ almost surely. Then $\limsup_{n\to\infty} \frac{1}{n}\sum_{i=0}^n Z_i>0$ almost surely.
\end{proposition}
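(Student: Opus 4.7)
The plan is to reduce to the ergodic case and combine Birkhoff's ergodic theorem with Atkinson's cocycle recurrence theorem. By the ergodic decomposition of stationary sequences, I may assume without loss of generality that $(Z_i)_{i\in\ZZ}$ is ergodic under the shift $T$. Then $L := \limsup_{n\to\infty} \frac{1}{n}\sum_{i=0}^n Z_i$ is $T$-invariant and hence almost surely equals a constant in $[-\infty,+\infty]$; it suffices to prove $L > 0$.

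Since $\sum_{i=0}^n Z_{-i}$ is a Birkhoff sum for the inverse shift $T^{-1}$, which shares the invariant $\sigma$-algebra of $T$, Birkhoff's ergodic theorem---applied under the assumption $Z_0 \in L^1$, which is automatic in the paper's application (where $|Z_i|$ is bounded) and can otherwise be arranged by truncating $Z_0$ from below while preserving the hypothesis---gives that both $\frac{1}{n}\sum_{i=0}^n Z_i$ and $\frac{1}{n}\sum_{i=0}^n Z_{-i}$ converge almost surely to $\E Z_0$. The hypothesis that $\sum_{i=0}^n Z_{-i}$ is eventually positive then forces $\E Z_0 \geq 0$, and the conclusion $L = \E Z_0 > 0$ follows immediately if the inequality is strict.

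The key step is to exclude the borderline case $\E Z_0 = 0$. Atkinson's cocycle recurrence theorem (which applies to ergodic integrable mean-zero stationary sequences) asserts that $\liminf_n \bigl|\sum_{i=0}^n Z_{-i}\bigr| = 0$ almost surely; combined with the eventual-positivity hypothesis, this upgrades to $F := \liminf_n \sum_{i=0}^n Z_{-i} = 0$ almost surely. On the other hand, a direct stationarity computation---namely $\sum_{i=0}^n Z_{-i}(T\omega) = Z_1(\omega) + \sum_{i=0}^{n-1} Z_{-i}(\omega)$, whence taking $\liminf$ yields the cocycle identity $F \circ T - F = Z_1$---combined with $F \equiv 0$ a.s.\ forces $Z_1 \equiv 0$ a.s., and hence $Z_i \equiv 0$ for every $i$ by stationarity, contradicting $\sum_{i=0}^n Z_{-i} > 0$.

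The main obstacle I anticipate is the careful invocation of Atkinson's recurrence theorem and its combination with the pointwise identity $F \circ T - F = Z_1$; the remaining steps (ergodic decomposition, Birkhoff's theorem, and the truncation reduction to integrable $Z_0$) are standard.
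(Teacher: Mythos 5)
Your approach is genuinely different from the paper's and, under an integrability hypothesis, is correct. The paper argues combinatorially: it sets $R_n = \inf\{m \geq 0 : \sum_{i=n}^{n+m} Z_i > 0\}$, shows the intervals $[n,n+R_n]$ are pairwise nested or disjoint and that every integer lies in only finitely many of them (this is exactly where the hypothesis on backward sums enters), extracts a stationary tiling of $\ZZ$ by maximal such intervals, and then applies the pointwise ergodic theorem to the \emph{bounded} sequence $\min\{Y_n,1\}$, where $Y_n$ is the (non-negative) sum over the maximal interval with right endpoint $n$. The key virtue of this route is that it never integrates $Z_0$ itself, so it needs no moment assumption. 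Your route trades the combinatorics for Birkhoff plus Atkinson's recurrence theorem plus the cocycle identity $F\circ T - F = Z_1$, which is elegant and shorter, and all the individual steps (ergodic reduction, the identity $\sum_{i=0}^n Z_{-i}(T\omega) = Z_1(\omega) + \sum_{i=0}^{n-1}Z_{-i}(\omega)$, upgrading Atkinson's $\liminf |S_n|=0$ to $\liminf S_n = 0$ via eventual positivity, and the final contradiction $Z_1\equiv 0$) are correct.

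The one genuine gap is the integrability hypothesis, which you flag but do not actually dispose of. Both Birkhoff and Atkinson require $Z_0 \in L^1$, while the proposition as stated imposes no moment condition. Your proposed fix, truncating from below $Z_i^{(M)} := \max(Z_i,-M)$, does preserve the hypothesis and handles the case $\E Z_0^- = \infty$, $\E Z_0^+ < \infty$ (in that case $\E Z^{(M)}\to -\infty$ as $M\to\infty$ while the truncated Birkhoff argument forces $\E Z^{(M)}\geq 0$, a contradiction, so the case cannot occur). But when $\E Z_0^+ = \E Z_0^- = \infty$, truncating from below leaves $\E (Z_0^{(M)})^+ = \infty$, so $Z^{(M)}$ is still not in $L^1$; Birkhoff then only gives $\frac{1}{n}\sum Z_i^{(M)} \to +\infty$, and since $Z_i^{(M)} \geq Z_i$ this says nothing about $\frac{1}{n}\sum Z_i$. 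Truncating from above destroys the hypothesis, and a two-sided truncation does not obviously preserve either side. So as a proof of the proposition in full generality there is a missing case. For the application in the paper this is harmless, because there $Z_i = M_i(i+1)$ and $|M_i(i+1)| = |a^r(\sigma^{i+1}(o),\sigma^i(o)) - a^\ell(\sigma^{i+1}(o),\sigma^i(o))| \leq \Reff(\sigma^i(o)\leftrightarrow\sigma^{i+1}(o)) \leq 1$, so $Z_0$ is bounded; but it is worth knowing that the paper's combinatorial proof is strictly more general precisely because it only ever invokes the ergodic theorem on the bounded function $\min\{Y_n,1\}$.
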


\begin{proof}
For each $n\in \ZZ$ let $R_n = \inf \{m \geq 0: \sum_{i=n}^{n+m}Z_i>0 \}$, so that $R_n=0$ whenever $Z_n>0$ and $(R_n)_{n\in \ZZ}$ is a stationary sequence of $\{0,1,\ldots\}$-valued random variables. It follows from the definitions that if $n \leq m$ then either $n+R_n < m$ or $n+R_n \geq m+R_m$, so that the intervals $[n,n+R_n]$ and $[m,m+R_m]$ are either disjoint or ordered by inclusion. On the other hand, we have by stationarity and the hypotheses of the Proposition that for each $n\in \ZZ$ there  almost surely exists $N_n<\infty$ such that $\sum_{i=n-m}^{n-1} Z_{i} >0$ for every $m \geq N_n$ and hence that $R_{n-m}+(n-m) < n$ for every $m \geq  N_n$, so that each $n\in \ZZ$ is contained in at most finitely many of the intervals $[m,m+R_m]$ almost surely. Using the fact that these intervals are either disjoint or ordered by inclusion, it follows that there is a unique decomposition of $\Z$ into maximal intervals of this form 
\[
\ZZ = \bigcup\Bigl\{[k,k+R_k] : k \in \ZZ, [k,k+R_k] \nsubseteq [m,m+R_m] \text{ for every $m\in \ZZ \setminus \{k\}$}\Bigr\}.
\]
Thus, if we define $Y_n$ by
\[
Y_n = \begin{cases}\sum_{i=k}^{n} Z_i & n = k+R_k \text{ for some $k\in \ZZ$ such that $[k,k+R_k]$ maximal} \\ 
0 & \text{ otherwise}
\end{cases}
\]
then $(Y_n)_{n\in \ZZ}$ is a stationary sequence of non-negative random variables such that $Y_n$ is positive whenever $n$ is the right endpoint of a maximal interval. Since $Y_n$ is non-negative and the set of $n$ such that $Y_n \neq 0$ is almost surely non-empty, it follows from the ergodic theorem applied to $(\min\{Y_n,1\})_{n\in\ZZ}$ that
\[
\liminf_{n\to\infty} \frac{1}{n}\sum_{i=0}^n Y_n >0
\]
almost surely. The claim follows since if $-m$ is the left  endpoint of the maximal interval containing $0$ then 
\[\sum_{i=0}^n Y_n = \sum_{i=-m}^n Z_i\]
for every $n$ that is the right  endpoint of some maximal interval.
\end{proof}

\begin{proof}
It follows from Lemma~\ref{lem:stationary} that $(M_n(n+1))_{n\in \ZZ}$ is a stationary sequence under $\PP_{\c{S}}$ and from Lemma~\ref{lem:cocycle} that $M_o(n)=\sum_{i=0}^{n-1}M_i(i+1)$ for every $n\geq 0$ and $M_o(-n)=\sum_{i=-n}^{-1}M_i(i+1)$ for every $n\leq 0$. Thus, Lemma~\ref{lem:eventual_positivity} implies that the stationary sequence $(M_n(n+1))_{n\in \ZZ}$ satisfies the hypotheses of Proposition~\ref{prop:ergodic} and hence that
\[
\limsup_{n\to\infty} \frac{M_o(n)}{n} >0
\]
almost surely. On the other hand, the subadditive ergodic theorem implies that the limit $\lim_{n\to\infty}\frac{1}{n} \Reff(o \leftrightarrow \sigma^n(o))$ exists 
	$\PSpine$-a.s., and since
\[
M_o(n)=\left(r_{\sigma^n(o), o}(\sigma^n(o)) -\ell_{\sigma^n(o), o}(\sigma^n(o)) \right)\Reff(o \leftrightarrow \sigma^n(o)) \leq \Reff(o \leftrightarrow \sigma^n(o))
\]
we must have that
\[\lim_{n\to\infty}\frac{1}{n} \Reff(o \leftrightarrow \sigma^n(o))>0\]
$\PSpine$-a.s.\ as claimed. The fact that the negative-$n$ limit $\lim_{n\to\infty}\frac{1}{n} \Reff(o \leftrightarrow \sigma^{-n}(o))$ also exists and is equal to the positive-$n$ limit a.s.\ follows from the subadditive ergodic theorem.
\end{proof}

\subsection{Completing the proof}

We now complete the proof of the main theorem.

\begin{proof}[Proof of Theorem~\ref{T:main}] 
It suffices by Proposition~\ref{prop:two_ended=linear_growth} to prove that if $(G,o)$ is a recurrent unimodular random rooted graph whose UST is two-ended almost surely then $G$ has linear volume growth almost surely. As before, we write $\c{S}$ for the spine of the oriented UST $T^\rightarrow$, write $\mathbb{P}_{\c{S}}$ for the conditional law of $(G,T^\rightarrow,o)$ given that $o\in \c{S}$, and write $\sigma$ for the shift along the spine as in Lemma~\ref{lem:stationary}. 

%
%
%
	For each $x\in V$ let $S(x)$ be an element of $\c{S}$ of minimal graph distance to $x$, choosing one of the finitely many possibilities uniformly and independently at random for each $x$ where this point is not unique. 
	 Letting $S^{-1}(v)=\{x\in V:S(x)=v\}$ for each $v\in \c{S}$, we have by the mass-transport principle that
	\begin{align*}
	 	\E_{\c{S}}|S^{-1}(o)| = \E\bigl[ |S^{-1}(o)| \mid o \in \c{S}\bigr] &= \mathbb{P}(o\in \c{S})^{-1}\E\left[ \sum_{x\in V} \id(o=\c{S}(x))\right] \\&= \mathbb{P}(o\in \c{S})^{-1}\E\left[ \sum_{x\in V} \id(x=\c{S}(o))\right]=\mathbb{P}(o\in \c{S})^{-1}<\infty.
	\end{align*}
	We thus have a stationary sequence of random variables $(|S^{-1}(\sigma^i(o))|)_{i \in \ZZ}$ with uniformly finite mean, and the ergodic theorem implies that
	\begin{equation}
	\label{eq:ergodic_theorem_S-1}
		\lim_{i\to \infty} \frac{1}{2n}\sum_{i=-n}^n |S^{-1}(\sigma^i(o))| <\infty
	\end{equation}
	almost surely. On the other hand, letting $B(o,r)$ be the graph distance ball of radius $r$ around $o$ for each $r\geq 1$, we have by definition of $S^{-1}$ that
	\begin{equation}
	\label{eq:B_containment}
B(o,r) \subseteq \bigcup \left\{S^{-1}\left(\sigma^n(o)\right):n\in \ZZ, d(o,\sigma^n(o))\leq 2r\right\}
	\end{equation}
	for each $r\geq 1$. Proposition~\ref{P:two-ends->linear} together with the trivial inequality $\Reff(x\leftrightarrow y)\leq d(x,y)$ imply that there exists a positive constant $c>0$ such that $d(o,\sigma^n(o))\geq c|n|$ for all sufficiently large (positive or negative) $n$ almost surely, and together with \eqref{eq:ergodic_theorem_S-1} and \eqref{eq:B_containment} this implies that $\limsup_{r\to \infty} \frac{1}{r}|B(o,r)|<\infty$ almost surely. This completes the proof. 
\end{proof}

\bibliography{aldous-lyons.bib}

\appendix

\section{Uniqueness of the potential kernel implies one-endedness of the UST, without finite expected degree}

\label{Appendix:BvE}

In this appendix we prove Theorem~\ref{T:harm and UST_no_degree_assumption}, which generalizes the theorem of Berestycki and the first author concerning the equivalence of the UST being one-ended and uniqueness of the harmonic measure from infinity to the case that the unimodular random rooted graph does not necessarily have finite expected degree. A secondary purpose of this appendix is to give a brief and self-contained account of those results of \cite{BvE21} that are needed for our main results.  Since recurrent graphs whose USTs are one-ended always have unique harmonic measure from infinity \cite[Theorem 14.2]{BLPS}, it suffices to prove that the converse holds under the additional assumption of unimodularity.
Moreover, it suffices as usual to consider the case that $(G,o)$ is ergodic.

\medskip

Suppose that $(G,o)$ is an ergodic recurrent unimodular random rooted graph for which $\c{H}$ is a singleton almost surely. We write $h$ for the unique element of $\c{H}$ and $a$ for the corresponding potential kernel. For each $c>0$ consider the event $A_c=\{\limsup_{x\to \infty}h_{x,o}(x)\geq c\}=\{$for each $\eps>0$ there exist infinitely many vertices $x$ with $h_{x,o}(x)\geq c-\eps\}$. 
As explained in detail in \cite[Lemma 5.3]{BvE21} (which concerns deterministic recurrent graphs),
we have that 
\begin{equation}
\label{eq:h_asymptotic}
h_{x,o}(x)\sim h_{x,w}(x) \qquad \text{ as $x\to \infty$ for each fixed $w\in V$},
\end{equation}
which 
 implies that $A_c$ is re-rooting invariant. Since $(G,o)$ was assumed to be ergodic we deduce the following.

\begin{lemma}
Let $(G,o)$ be an ergodic unimodular random rooted graph. If $G$ is almost surely recurrent with a uniquely defined harmonic measure from infinity then the event $A_c$ has probability $0$ or $1$ for each $c\in (0,1)$.
\end{lemma}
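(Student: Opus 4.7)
The plan is to verify that $A_c$ is a re-rooting invariant Borel event, at which point the $0/1$ dichotomy becomes an immediate consequence of the characterization of ergodicity recalled in Section~\ref{subsec:definitions}. Measurability of $A_c$ in the local topology is routine: the quantity $h_{x,o}(x)$ is determined by the isomorphism type of the doubly rooted graph $(G,o,x)$, and $A_c$ is obtained from these quantities by countable lim-sup and intersection operations. Hence the only real content of the lemma is re-rooting invariance, and this will be extracted directly from the asymptotic \eqref{eq:h_asymptotic} already recorded above the lemma.

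To verify re-rooting invariance I will fix a realization $(G,o)\in A_c$ together with an arbitrary vertex $w$ of $G$, and show $(G,w)\in A_c$. Given $\eps>0$, I will first invoke the membership $(G,o)\in A_c$ with tolerance $\eps/2$ to produce infinitely many vertices $x$ with $h_{x,o}(x)\geq c-\eps/2$. The asymptotic \eqref{eq:h_asymptotic} gives $h_{x,w}(x)/h_{x,o}(x)\to 1$ as $x\to\infty$, so for any prescribed $\delta>0$ all sufficiently large $x$ satisfy $h_{x,w}(x)\geq(1-\delta)h_{x,o}(x)$. Choosing $\delta>0$ small enough that $(1-\delta)(c-\eps/2)\geq c-\eps$ — which is possible because $c-\eps/2$ is a fixed positive quantity — one obtains infinitely many $x$ with $h_{x,w}(x)\geq c-\eps$, which is exactly what is needed to conclude $(G,w)\in A_c$. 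Once re-rooting invariance is in hand, ergodicity of $(G,o)$ finishes the proof.

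There is no genuine obstacle here: the substantive analytic input, namely \eqref{eq:h_asymptotic}, is imported from \cite{BvE21} and is not re-proved in this lemma. The only minor point to handle carefully is that the multiplicative approximation $(1-\delta)$ coming from \eqref{eq:h_asymptotic} must be absorbed into a small additive slack at the threshold $c$, which is possible precisely because $A_c$ is defined via a $\limsup$ and the restriction $c\in(0,1)$ ensures that $c-\eps/2$ can be taken to be a fixed positive constant.
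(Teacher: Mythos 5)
Your proof is correct and follows the same route as the paper: invoke the asymptotic $h_{x,o}(x)\sim h_{x,w}(x)$ from \cite{BvE21} to show $A_c$ is re-rooting invariant, then conclude by ergodicity. The paper leaves the multiplicative-to-additive slack argument implicit; you have simply spelled it out.
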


The next lemma is proven in \cite{BvE21} using an argument that relies on reversibility (and hence on the assumption $\E \deg(o)<\infty$). We give an alternative proof using F{\o}lner sequences that works without this assumption.

\begin{lemma}
\label{lem:A_1/2}
Let $(G,o)$ be an ergodic unimodular random rooted graph. If $G$ is almost surely recurrent with a uniquely defined harmonic measure from infinity then the event $A_{1/2}$ holds almost surely.
\end{lemma}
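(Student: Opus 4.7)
The event $A_{1/2}$ is rerooting invariant by \eqref{eq:h_asymptotic}, so by ergodicity $\mathbb{P}(A_{1/2}) \in \{0,1\}$; the plan is to argue by contradiction, assuming $\mathbb{P}(A_{1/2}) = 0$. A second application of \eqref{eq:h_asymptotic} and ergodicity will show that the random variable $c_0 := \limsup_{x \to \infty} h_{x,o}(x)$ is almost surely equal to a deterministic constant lying strictly below $1/2$; I then fix $\eta > 0$ with $c_0 + 2\eta < 1/2$. It follows that almost surely there exists a (random) finite set $W = W(G, o) \subseteq V$ such that $h_{o, x}(o) > 1/2 + \eta$ for every $x \in V \setminus (W \cup \{o\})$.

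The central input replacing the reversibility-based argument of \cite{BvE21} is amenability: recurrent unimodular random rooted graphs are amenable, so their orbit equivalence relation is hyperfinite (by Connes--Feldman--Weiss). I will use the following consequence: there exists a sequence of measurable random partitions $\Pi_n = \Pi_n(G)$ of $V$ into almost surely finite pieces such that, writing $\Pi_n(o)$ for the piece containing $o$, $|\Pi_n(o)| \to \infty$ almost surely as $n \to \infty$. Since each partition depends only on $G$, there is the built-in symmetry $x \in \Pi_n(o) \iff o \in \Pi_n(x)$, and $|\Pi_n(x)| = |\Pi_n(o)|$ whenever $x \in \Pi_n(o)$. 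These partitions play the role of F{\o}lner sequences in the unimodular setting.

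For each $n \geq 1$ I then apply the mass-transport principle to the nonnegative transport
\[
f_n(G, o, x) := \id\!\left(x \in \Pi_n(o),\; x \neq o\right)\cdot \frac{h_{o, x}(o)}{|\Pi_n(o)| - 1}.
\]
Setting $L_n(G, o) := \sum_x f_n(G, o, x)$, the symmetry of $\Pi_n$ combined with $h_{o,x}(o) + h_{x,o}(x) = 1$ gives $\sum_x f_n(G, x, o) = 1 - L_n(G, o)$, so mass transport forces $\mathbb{E}[L_n(G, o)] = 1/2$ for every $n$. On the other hand, under the contradiction hypothesis every $x \in \Pi_n(o) \setminus (W \cup \{o\})$ contributes $h_{o, x}(o) > 1/2 + \eta$ to $L_n$, so
\[
L_n(G, o) \geq \left(\tfrac{1}{2} + \eta\right)\left(1 - \frac{|\Pi_n(o) \cap W|}{|\Pi_n(o)| - 1}\right).
\]
The right-hand side converges almost surely to $1/2 + \eta$ since $|W|$ is finite almost surely while $|\Pi_n(o)| \to \infty$ almost surely. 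Because $0 \leq L_n \leq 1$, Fatou's lemma then yields $\liminf_n \mathbb{E}[L_n] \geq 1/2 + \eta > 1/2$, contradicting $\mathbb{E}[L_n] = 1/2$ and completing the argument.

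The main obstacle is the amenability/hyperfiniteness input: extracting or constructing the invariant random partitions $\Pi_n$ in the setting of a recurrent unimodular random rooted graph without a finite expected degree assumption. The existence follows from the general principle that recurrent unimodular random rooted graphs are amenable and hence have hyperfinite orbit equivalence relations, but isolating a sufficiently clean version of this fact in the present generality is the main technical point one must verify.
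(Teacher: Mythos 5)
Your argument is correct and, despite the contradiction framing, rests on the same two ingredients as the paper's proof: hyperfiniteness of the recurrent unimodular graph to produce a.s.\ growing finite unimodular pieces (your $\Pi_n(o)$, the paper's $\omega_n$-component $K_n$), and the mass-transport principle exploiting the symmetry $h_{x,o}(x)+h_{x,o}(o)=1$, followed by Fatou. The paper transports the indicator $\id\bigl(h_{x,o}(x)\geq \tfrac12\bigr)$ directly and finishes via Markov's inequality rather than by contradiction, while you transport $h_{o,x}(o)/(|\Pi_n(o)|-1)$ — these are interchangeable rephrasings (one small care: the identity $\E[L_n]=\tfrac12$ only holds exactly on $\{|\Pi_n(o)|>1\}$, but since $|\Pi_n(o)|\to\infty$ a.s.\ this does not affect the conclusion).
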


\begin{proof}
It suffices to prove that $A_{c}$ holds with positive probability for every $c<1/2$.
Since $(G,o)$ is recurrent, it follows from the results of \cite[\S 8]{AldousLyonsUnimod2007} that $(G,o)$ is \emph{hyperfinite}, meaning that there exists a sequence of random subsets $(\omega_n)_{n\geq 1}$ of $E$ such that 
\begin{enumerate}
	\item Every component of the subgraph spanned by $\omega_n$ is finite almost surely for each $n\geq 1$.
	\item $\omega_n \subseteq \omega_{n+1}$ for each $n\geq 1$ and $\bigcup_{n\geq 1} \omega_n = E$.
	\item The random rooted edge-labelled graph $(G,o,(\omega_n)_{n\geq 1})$ is unimodular.
\end{enumerate}
Let $n\geq 1$ and let $K_n$ be the component of $o$ in $\omega_n$. Then we have by the mass-transport principle that
\[
\E\left[\frac{1}{|K_n|}\sum_{x\in K_n}\id\left(h_{x,o}(x)\geq \frac{1}{2}\right)\right]
=
\E\left[\frac{1}{|K_n|}\sum_{x\in K_n}\id\left(h_{x,o}(o)\geq \frac{1}{2}\right)\right],
\]
and since the sum of the two sides is at least $1$ it follows that
\[
\E\left[\frac{1}{|K_n|}\sum_{x\in K_n}\id\left(h_{x,o}(x)\geq \frac{1}{2}\right)\right] \geq \frac{1}{2}
\]
and hence by Markov's inequality that
\[
\mathbb{P}\left(\left|\{x\in K_n : h_{x,o}(x)\geq \tfrac{1}{2}\}\right| \geq \tfrac{1}{4}|K_n| \right) \geq 1- \frac{4}{3} \E\left[\frac{1}{|K_n|}\sum_{x\in K_n}\id\left(h_{x,o}(x)< \frac{1}{2}\right)\right] \geq \frac{1}{3}.
\]
Since $|K_n|\to \infty$ almost surely as $n\to\infty$, it follows from this and Fatou's lemma that
\[
\mathbb{P}(A_{1/2})\geq \mathbb{P}\left(\left|\{x\in K_n : h_{x,o}(x)\geq \tfrac{1}{2}\}\right| \geq \tfrac{1}{4}|K_n| \text{ for infinitely many $n$}\right) \geq \frac{1}{3}
\]
and hence by ergodicity that $\mathbb{P}(A_{1/2})=1$ as claimed.
\end{proof}




\begin{lemma}
\label{lem:hit_sets_with_h_lower_bound}
Let $G=(V,E)$ be an infinite, connected, locally finite recurrent graph with uniquely defined harmonic measure from infinity $h$, let $o\in V$ and let $a$ be the associated potential kernel. If $A$ is any infinite set of vertices with $\inf_{x\in A}h_{x,o}(x)>0$, the Doob-transformed walk $\wh{X}$ visits $A$ infinitely often almost surely.
\end{lemma}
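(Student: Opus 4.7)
The plan is to combine the invariant-event 0-1 law from Theorem~\ref{thm:Liouville} with a simple monotone-continuity argument. Since the harmonic measure from infinity is uniquely defined, $\c{H}=\{h\}$ and this unique element is trivially extremal, so Theorem~\ref{thm:Liouville} tells us that every invariant event has $\wh{\mathbf{P}}^h_o$-probability $0$ or $1$. The event $F=\{\wh{X}\text{ visits }A\text{ infinitely often}\}$ is manifestly invariant (deleting the first coordinate of a trajectory does not change whether it visits $A$ infinitely often), so it suffices to show that $\wh{\mathbf{P}}^h_o(F)>0$.

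The key input is the single-point hitting identity from~\eqref{eq:hitting prop CRW}: $\wh{\mathbf{P}}^h_o(\wh{X}\text{ hits }x)=h_{o,x}(x)=h_{x,o}(x)$ for each $x\in V\setminus\{o\}$. Writing $c:=\inf_{x\in A}h_{x,o}(x)>0$ and enumerating $A=\{x_1,x_2,\ldots\}$, this gives $\wh{\mathbf{P}}^h_o(\wh{X}\text{ hits }x_k)\geq c$ for every $k$. Now set
\[F_N:=\{\wh{X}\text{ hits some }x_k\text{ with }k\geq N\}.\]
The sequence $(F_N)_{N\geq 1}$ is decreasing and $\wh{\mathbf{P}}^h_o(F_N)\geq \wh{\mathbf{P}}^h_o(\wh{X}\text{ hits }x_N)\geq c$, so by continuity of measure from above,
\[\wh{\mathbf{P}}^h_o\Bigl(\bigcap_{N\geq 1}F_N\Bigr)=\lim_{N\to\infty}\wh{\mathbf{P}}^h_o(F_N)\geq c.\]
But $\bigcap_{N\geq 1}F_N$ is precisely the event that $\wh{X}$ hits infinitely many distinct vertices of $A$, which is contained in $F$. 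Hence $\wh{\mathbf{P}}^h_o(F)\geq c>0$, and the 0-1 law promotes this to $\wh{\mathbf{P}}^h_o(F)=1$.

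The only obstacle I anticipated was the need for a quantitative lower bound on the probability of returning to $A$ from an arbitrary point of $A$ (which would be required for a naive Borel--Cantelli style attack), but this is neatly sidestepped by the observation that hitting infinitely many distinct points of $A$ already implies visiting $A$ infinitely often, so the single-point bounds $\wh{\mathbf{P}}^h_o(\wh{X}\text{ hits }x_k)\geq c$ together with monotone continuity are enough.
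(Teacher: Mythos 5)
Your proof is correct and takes essentially the same route as the paper: both combine the 0--1 law of Theorem~\ref{thm:Liouville} with the single-point hitting identity~\eqref{eq:hitting prop CRW} to get a positive lower bound on the probability of hitting $A$ infinitely often and then upgrade to probability one. The only cosmetic difference is that the paper invokes ``Fatou's lemma'' (for events, i.e.\ $\wh{\mathbf{P}}(\limsup_k\{\wh{X}\text{ hits }x_k\})\geq\limsup_k\wh{\mathbf{P}}(\wh{X}\text{ hits }x_k)$) where you spell out the same inequality via continuity from above for the decreasing sets $F_N$.
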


\begin{proof}
We have by \eqref{eq:hitting prop CRW} that $\wh{\mathbf{P}}_o(\wh{X}$ hits $x)=h_{o,x}(x)$ for every $x\in V$, and it follows by Fatou's lemma that $\wh{\mathbf{P}}($hit $A$ infinitely often$)\geq \inf_{x\in A}h_{x,o}(x)>0$. On the other hand, we have by Theorem~\ref{thm:Liouville} and the assumption that $h$ is unique that $\wh{X}$ has trivial tail $\sigma$-algebra, so that $\wh{\mathbf{P}}($hit $A$ infinitely often$)=1$ as claimed.
\end{proof}

\begin{proposition}
Let $G=(V,E)$ be an infinite, connected, locally finite recurrent graph with uniquely defined harmonic measure from infinity $h$, let $o\in V$, let $a$ be the associated potential kernel, and suppose that $\liminf_{x\to\infty} h_{x,o}(x)>0$.
If $\wh{X}$ and $\wh{Y}$ are independent copies of the Doob-transformed walk started at some vertices $x$ and $y$, then $\{\wh{X}_n:n\geq 0\}\cap \{\wh{Y}_n:n \geq 0\}$ is infinite almost surely.
\end{proposition}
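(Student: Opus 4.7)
The plan is to condition on the trajectory of $\wh{Y}$ and apply (a minor extension of) Lemma~\ref{lem:hit_sets_with_h_lower_bound} to the trace of $\wh{Y}$ viewed as the target set. By the hypothesis $\liminf_{z\to\infty}h_{z,o}(z)>0$, fix $c>0$ and a finite set $F\subseteq V$ with $h_{z,o}(z)\geq c$ for every $z\in V\setminus F$. Since the Doob-transformed walk is transient (by the lemma following the definition of the Doob transform), the set $A:=\{\wh{Y}_n:n\geq 0\}\setminus F$ is almost surely an infinite subset of $V$, and on this event $\inf_{z\in A}h_{z,o}(z)\geq c$.

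Condition on $\wh{Y}$, so that $A$ is a deterministic infinite set, and consider the shift-invariant event $E:=\{\wh{X}\text{ visits infinitely many distinct vertices of }A\}$. Enumerating $A=\{x_1,x_2,\ldots\}$, the identity \eqref{eq:hitting prop CRW} gives $\wh{\mathbf{P}}^h_o(\wh{X}\text{ hits }x_k)=h_{o,x_k}(x_k)\geq c$ for every $k$, so reverse Fatou yields $\wh{\mathbf{P}}^h_o(E)\geq c>0$. Since $E$ is shift-invariant and $h$ is extremal (it being the unique element of $\c{H}$), Theorem~\ref{thm:Liouville} upgrades this to $\wh{\mathbf{P}}^h_o(E)=1$, exactly as in the proof of Lemma~\ref{lem:hit_sets_with_h_lower_bound}.

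It remains to upgrade from starting point $o$ to starting point $x$. The function $\phi(z):=\wh{\mathbf{P}}^h_z(E)$ is bounded and, because $E$ is shift-invariant, satisfies $\phi(z)=\sum_{z'}\wh{p}^{\:a}(z,z')\phi(z')$, so that $\phi$ is $\wh{p}^{\:a}$-harmonic. The convex-combination argument at the end of the proof of Theorem~\ref{thm:Liouville} applies to any such $\phi$: if $\phi$ were non-constant one could decompose $a = a^h$ as a non-trivial convex combination of $a\cdot\phi/\wh{\mathbf{P}}^h_o(E)$ and $a\cdot(1-\phi)/\wh{\mathbf{P}}^h_o(E^c)$ inside $\c{P}_o$, contradicting extremality of $a$. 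Hence $\phi\equiv\phi(o)=1$, and in particular $\wh{\mathbf{P}}^h_x(E)=1$.

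Integrating over $\wh{Y}$ using the independence of the two walks and Fubini gives that $\{\wh{X}_n\}\cap\{\wh{Y}_n\}\supseteq\{\wh{X}_n\}\cap A$ is infinite almost surely, as required. The only mildly delicate point is the passage from $\wh{\mathbf{P}}^h_o(E)=1$ to $\wh{\mathbf{P}}^h_x(E)=1$, which amounts to extracting from the proof of Theorem~\ref{thm:Liouville} the stronger statement that, for extremal $a\in\c{P}_o$, every bounded $\wh{p}^{\:a}$-harmonic function is constant.
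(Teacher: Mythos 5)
Your proof follows essentially the same route as the paper: both reduce to an application of the hitting-density lemma (Lemma~\ref{lem:hit_sets_with_h_lower_bound}) against a random trace, using Fubini/conditioning, the identity \eqref{eq:hitting prop CRW}, and the Liouville property of Theorem~\ref{thm:Liouville}. (The paper uses the deterministic set $A=\{x:h_{x,o}(x)\geq\delta\}$ and applies the lemma twice, first to $\wh{X}$ and then, conditionally, to $\wh{Y}$ against $\wh{X}\cap A$; you instead cut the trace of $\wh{Y}$ outside a finite exceptional set $F$ and apply the lemma once to $\wh{X}$. These are cosmetically different but the same idea.) To your credit, you notice a point the paper glosses over entirely: Lemma~\ref{lem:hit_sets_with_h_lower_bound} and Theorem~\ref{thm:Liouville} are stated for the Doob walk started at $o$, while the Proposition starts the walks at $x$ and $y$.

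However, the specific argument you give for the upgrade from $\wh{\mathbf{P}}_o^h(E)=1$ to $\wh{\mathbf{P}}_x^h(E)=1$ is broken as written: the proposed decomposition of $a$ uses the factor $a\cdot(1-\phi)/\wh{\mathbf{P}}_o^h(E^c)$, and you have just established $\wh{\mathbf{P}}_o^h(E^c)=0$, so the decomposition is ill-defined precisely in the case you care about. There are two clean fixes. The simplest bypasses the Liouville formulation entirely: since $a(x)>0$, the identity \eqref{eq:hitting prop CRW} gives $\wh{\mathbf{P}}_o^h(\wh{X}\text{ hits }x)>0$; by the strong Markov property applied at the first hitting time of $x$, and shift-invariance of $E$, one has $\wh{\mathbf{P}}_o^h(E^c)\geq \wh{\mathbf{P}}_o^h(\wh{X}\text{ hits }x)\cdot\wh{\mathbf{P}}_x^h(E^c)$, forcing $\wh{\mathbf{P}}_x^h(E^c)=0$. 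Alternatively, if you do want the stronger statement that extremal $a$ implies every bounded $\wh{p}^{\,a}$-harmonic $\phi$ is constant, you should split $\phi$ as $\phi$ and $C-\phi$ for some $C>\sup\phi$ rather than as $\phi$ and $1-\phi$: then $a\phi/\phi(o)$ and $a(C-\phi)/(C-\phi(o))$ both lie in $\c{P}_o$, $a$ is a genuine convex combination of them with weights $\phi(o)/C$ and $(C-\phi(o))/C$, and extremality forces $\phi$ constant without ever dividing by a probability that may vanish.
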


\begin{proof}
Let $\delta>0$ be such that $A=\{x\in V: h_{x,o}(x)\geq \delta\}$ is infinite. Applying Lemma~\ref{lem:hit_sets_with_h_lower_bound} yields that $\wh{X}\cap A$ is infinite almost surely, and applying Lemma~\ref{lem:hit_sets_with_h_lower_bound} a second time yields that $\wh{Y}\cap \wh{X} \cap A$ is infinite almost surely.
\end{proof}

Applying this proposition together with the results of \cite{Lyons_2003}, which imply that an independent Markov process and loop-erased Markov process intersect infinitely almost surely whenever the corresponding two independent Markov processes do, we deduce the following immediate corollary.

\begin{corollary}
\label{cor:intersection_property2}
Let $G=(V,E)$ be an infinite, connected, locally finite recurrent graph with uniquely defined harmonic measure from infinity $h$, let $o\in V$, let $a$ be the associated potential kernel, and suppose that $\liminf_{x\to\infty} h_{x,o}(x)>0$.
If $\wh{X}$ and $\wh{Y}$ are independent copies of the Doob-transformed walk started at some vertices $x$ and $y$, then $\{\wh{X}_n:n\geq 0\}\cap \{\mathrm{LE}(\wh{Y})_n:n \geq 0\}$ is infinite almost surely.
\end{corollary}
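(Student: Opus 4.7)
The plan is to derive this as a direct corollary of the preceding proposition combined with the transfer principle established in Lyons' work \cite{Lyons_2003}. The preceding proposition already gives us that $\{\wh{X}_n : n \geq 0\} \cap \{\wh{Y}_n : n \geq 0\}$ is infinite almost surely under the stated hypotheses on $G$, $o$, $h$, and the liminf condition, so the only remaining content is to pass from the full trajectory of $\wh{Y}$ to its loop-erasure $\mathrm{LE}(\wh{Y})$.

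For this passage, I would invoke the main result of \cite{Lyons_2003}, which provides precisely the statement that for a transient Markov chain $\wh{Y}$ and an independent process $\wh{X}$, the event that $\wh{X}$ intersects the loop-erasure $\mathrm{LE}(\wh{Y})$ infinitely often holds almost surely whenever the event that $\wh{X}$ intersects the trace of $\wh{Y}$ itself infinitely often holds almost surely. The Doob transform $\wh{X}^h$ is transient (as shown earlier in the paper), so its loop-erasure is well defined, and Lyons' theorem applies to the pair $(\wh{X}, \wh{Y})$ of independent copies of the Doob-transformed walk.

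Combining these two inputs yields the corollary immediately. The only step requiring any thought is verifying that the framework of \cite{Lyons_2003} applies to the Doob-transformed walks rather than to simple random walks, but since Lyons' theorem is stated at the level of general transient Markov chains on countable state spaces, and since $\wh{X}^h$ is exactly such a chain, this verification is routine. No genuine obstacle arises; the work has been done in the preceding proposition.
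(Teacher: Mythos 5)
Your proof matches the paper's own argument exactly: the paper states the corollary as an immediate consequence of the preceding proposition together with the result of \cite{Lyons_2003}, which lets one pass from infinite intersection with the trace of $\wh{Y}$ to infinite intersection with $\mathrm{LE}(\wh{Y})$. Your added remark that $\wh{X}^h$ is a transient Markov chain on a countable state space (so that Lyons' theorem applies) is the correct and only thing to check.
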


\begin{proposition}
\label{prop:tip_escape}
Let $G=(V,E)$ be an infinite, connected, locally finite recurrent graph with uniquely defined harmonic measure from infinity $h$, let $o\in V$, let $a$ be the associated potential kernel, and suppose that $\liminf_{x\to\infty} h_{x,o}(x)>0$. For each $x\in V$, let $X$ be a random walk started at $x$ and let $\wh{Y}$ be a Doob-transformed walk started at $o$. Then
\[
\lim_{x\to \infty}\mathbb{P}\left(\{X_n:0\leq n \leq T_o\} \cap \{\mathrm{LE}(\wh{Y})_m: m \geq 0\}=\{o\}\right)=0.
\]
\end{proposition}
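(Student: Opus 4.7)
The plan is to reduce the proposition to Corollary~\ref{cor:intersection_property2} using two ingredients beyond this corollary: the strong Markov property at the last visit of $X$ to $x$ combined with reversibility of the simple random walk, which will let us express the relevant trace as that of a walk from $o$ conditioned to hit $x$ before returning; and Lemma~\ref{L: local convergence}, which will let us compare that conditioned walk to $\wh{X}^h$.

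First I would let $L := \sup\{n \le T_o : X_n = x\}$ be the last visit of $X$ to $x$ before $T_o$. The post-$L$ segment $X[L, T_o]$ is distributed as $X[0, T_o]$ under the conditional law $\mathbb{P}_x(\,\cdot \mid T_o < T_x^+)$, and its trace is contained in the trace of $X[0, T_o]$, so it suffices to prove
\[
q(x) := \mathbb{P}_x\bigl(X[0,T_o]\cap \mathrm{LE}(\wh{Y}) = \{o\} \,\bigm|\, T_o < T_x^+\bigr) \to 0 \quad \text{as } x \to \infty.
\]
Standard path-level reversibility of the simple random walk gives a bijection (via time reversal) between paths from $x$ to $o$ that do not revisit $\{x,o\}$ internally and paths from $o$ to $x$ of the same type, under which these two conditional laws match. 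In particular, since time reversal preserves traces, $q(x)$ equals the corresponding probability computed under the law $\mathbb{P}_o(\,\cdot \mid T_x < T_o^+)$ applied to $X[0, T_x]$.

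Next I would use that $\c{H}$ is a singleton: any sequence $x_n$ leaving every finite set converges to $h$ in $V \cup \c{H}$, so Lemma~\ref{L: local convergence} gives that the conditional law of $X$ under $\mathbb{P}_o(\,\cdot \mid T_x < T_o^+)$ converges weakly in the product topology to the law of $\wh{X}^h$ started at $o$. For any fixed $M$, as soon as $d(o,x) > M$ we have $T_x > M$ surely, so
\[
q(x) \le \mathbb{P}_o\bigl(X[0,M]\cap \mathrm{LE}(\wh{Y}) = \{o\} \,\bigm|\, T_x < T_o^+\bigr),
\]
whose right-hand side, for each fixed realization of the independent walk $\wh{Y}$, converges as $x \to \infty$ to $\mathbb{P}(\wh{X}^h[0,M]\cap \mathrm{LE}(\wh{Y}) = \{o\} \mid \wh{Y})$, because the event is a cylinder on $X[0,M]$. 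Dominated convergence in $\wh{Y}$ then yields the corresponding statement for the joint probability.

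To finish I would invoke Corollary~\ref{cor:intersection_property2}. Almost surely $\mathrm{LE}(\wh{Y})$ is infinite (since $\wh{Y}$ is transient), and the hypothesis $\liminf_{y \to \infty} h_{y,o}(y) > 0$ implies that all but finitely many of its vertices $y$ satisfy $h_{y,o}(y) \ge \delta$ for some random $\delta > 0$; applying Corollary~\ref{cor:intersection_property2} to this cofinite subset shows that $\wh{X}^h$ intersects $\mathrm{LE}(\wh{Y}) \setminus \{o\}$ infinitely often a.s. Hence for each $\varepsilon > 0$ there exists $M$ with $\mathbb{P}(\wh{X}^h[0,M] \cap \mathrm{LE}(\wh{Y}) = \{o\}) < \varepsilon$, and combined with the previous paragraph this yields $\limsup_{x \to \infty} q(x) \le \varepsilon$, completing the proof since $\varepsilon$ is arbitrary. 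The main technical care throughout will be in justifying the joint weak convergence cleanly given that $\mathrm{LE}(\wh{Y})$ is random; I would handle this by conditioning on $\wh{Y}$ before invoking Lemma~\ref{L: local convergence}, using that the event is a finite-time cylinder on $X$ for fixed $M$, and then applying dominated convergence in $\wh{Y}$ at the very end.
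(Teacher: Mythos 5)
Your proof is correct and follows essentially the same route as the paper's (one-line) argument: the paper simply asserts that the time-reversed final segment $(X_{T_o},X_{T_o-1},\ldots,X_{T_o-k})$ converges in law to $(\wh{X}^h_0,\ldots,\wh{X}^h_k)$ and then invokes Corollary~\ref{cor:intersection_property2}; your reduction to the post-$L$ segment, the path-level time reversal identifying $\mathbf{P}_x(\cdot\mid T_o<T_x^+)$ with $\mathbf{P}_o(\cdot\mid T_x<T_o^+)$, and the appeal to Lemma~\ref{L: local convergence} together with the cylinder-set/dominated-convergence bookkeeping are exactly the details behind that assertion. One small remark: your digression about cofinite subsets of $\mathrm{LE}(\wh{Y})$ is unnecessary, since Corollary~\ref{cor:intersection_property2} already gives infinitely many intersections and hence intersections off $o$ directly.
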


\begin{proof}
As $x\to \infty$, the law of the time-reversed final segment $(X_{T_o},X_{T_o-1},\ldots,X_{T_o-k})$ converges to that of $(\wh{X}_0,\ldots,\wh{X}_k)$ for each $k\geq 1$, and the claim follows from Corollary~\ref{cor:intersection_property2}.
\end{proof}

\begin{proof}[Proof of Theorem~\ref{T:harm and UST_no_degree_assumption}]
The fact that $G$ has a unique harmonic measure from infinity means that we can endow the uniform spanning tree of $G$ with an orientation in a canonical way: Suppose that we exhuast $V$ by finite sets $V=\bigcup V_n$ and let $G_n^*$ be defined by contracting $V\setminus V_n$ into a single boundary vertex $\partial_n$, so that the UST of $G$ can be expressed as the weak limit of the USTs of the graphs $G_n^*$. If for each $n\geq 1$ we orient the UST of $G_n^*$ towards the boundary vertex $\partial_n$ to obtain an oriented tree $T_n^\rightarrow$, then the uniqueness of the harmonic measure from infinity on $G$ implies that the law of $T_n^\rightarrow$ converges weakly to the law of an oriented spanning tree $T^\rightarrow$ of $G$, which can be thought of as a canonical (but potentially random) orientation of the UST of $G$. Indeed, if we fix an enumeration $v_1,v_2,\ldots$ of $V$ with $v_1=o$ we can sample $T_n^\rightarrow$ using Wilson's algorithm rooted at $\partial_n$, starting with the vertices in the order they appear in the enumeration of $V$, and orienting the edges of the tree in the direction they are crossed by the loop-erased walk that contributed them to the tree. In the infinite-volume limit (since only the part of the first walk after its final visit to $o$ contributes to its loop erasure), this corresponds to doing Wilson's algorithm where the first walk started at $o$ is Doob-transformed and the remaining walks are ordinary simply random walks.

An important consequence of this discussion is that if we sample the oriented uniform spanning tree using Wilson's algorithm rooted at infinity, where the first random walk is a Doob-transformed walk started at $o$ and the remaining walks are ordinary simple random walks, the distribution of the resulting oriented tree $T^\rightarrow$ does not depend on the choice of the root vertex $o$, since it is given by the limit of the USTs of $G_n^*$ oriented towards $\partial_n$ independently of the choice of exhaustion. Given the oriented tree $T^\rightarrow$, we say that a vertex $u$ is in the \textbf{future} of a vertex $v$ if the unique infinite oriented path emanating from $v$ passes through $v$, and say that $u$ is in the \textbf{past} of $v$ if $v$ is in the future of $u$. 

Let $(\omega_n)_{n\geq1}$ be a sequence witnessing the fact that $(G,o)$ is hyperfinite as in the proof of Lemma~\ref{lem:A_1/2} and let $K_n$ be the cluster of $o$ in $\omega_n$ for each $n\geq 1$. We have by the mass-transport principle that
\[
\E\left[\frac{1}{|K_n|}\sum_{x\in K_n}\id\left(\text{$x$ in past of $o$}\right)\right]
=
\E\left[\frac{1}{|K_n|}\sum_{x\in K_n}\id\left(\text{$x$ in future of $o$}\right)\right].
\]
On the other hand, letting $\c{S}$ be the set of vertices belonging to a doubly infinite path in $T$, we also have that
\[
\E\left[\frac{1}{|K_n|}\sum_{x\in K_n}\id\left(\text{$x$ in past or future of $o$}\right)\right] \geq \E\left[\frac{1}{|K_n|}\sum_{x\in K_n} \id(\text{$o,x\in \c{S}$})\right]
\]
and we can use the mass-transport principle again to bound
\begin{align*}
\E\left[\frac{1}{|K_n|}\sum_{x\in K_n} \id(o,x\in \c{S})\right] &=\E\left[\frac{1}{|K_n|^2}\sum_{x,y\in K_n} \id(o,x\in \c{S})\right]=\E\left[\frac{1}{|K_n|^2}\sum_{x,y\in K_n} \id(x,y\in\c{S})\right]
\\&= \E\left[\left(\frac{|K_n \cap \c{S}|}{|K_n|}\right)^2\right] \geq \E\left[\frac{|K_n \cap \c{S}|}{|K_n|}\right]^2 = \mathbb{P}(o\in \c{S})^2.
\end{align*}
Putting these two estimates together, it follows that
\begin{equation}
\label{eq:past_and_S}
\E\left[\frac{1}{|K_n|}\sum_{x\in K_n}\id\left(\text{$x$ in past of $o$}\right)\right] \geq \frac{1}{2}\mathbb{P}(o\in \c{S})^2.
\end{equation}
On the other hand, if we sample $T^\rightarrow$ using Wilson's algorithm rooted at infinity, starting with a Doob-transformed $\wh{Y}$ started at $o$ followed by an ordinary random walk $X$ started at $x$, the vertex $x$ belongs to the past of $o$ if and only if the walk $X$ first hits the loop-erasure of $\wh{Y}$ at the vertex $o$. Proposition~\ref{prop:tip_escape} implies that this probability tends to zero as $x\to\infty$,and it follows by bounded convergence that
\begin{equation}
\label{eq:density_zero_past}
\E\left[\frac{1}{|K_n|}\sum_{x\in K_n}\id\left(\text{$x$ in past of $o$}\right)\right]\to 0
\end{equation}
as $n\to\infty$. Putting together \eqref{eq:past_and_S} and \eqref{eq:density_zero_past} yields that $\mathbb{P}(o\in \c{S})=0$.  Since ``everything that can happen somewhere can happen at the root"  \cite[Lemma 2.3]{AldousLyonsUnimod2007}, it follows that $\c{S}=\emptyset$ almost surely and hence that $T$ is one-ended almost surely as claimed.
\end{proof}

\end{document}